\newtheorem{theorem}{Theorem}
\newtheorem{conjecture}{Conjecture}
\newtheorem{lemma}{Lemma}
\newtheorem{corollary}{Corollary}
\theoremstyle{remark}
\newtheorem{remark}{Remark}
\theoremstyle{definition}
\DeclareMathOperator{\interior}{int}
\DeclareMathOperator{\Tr}{Tr}
\DeclareMathOperator{\var}{var}
\DeclareMathOperator{\id}{id}
\DeclareMathOperator{\dist}{dist}
\newcommand{\bbN}{\mathbb{N}}
\newcommand{\bbZ}{\mathbb{Z}}
\newcommand{\bbC}{\mathbb{C}}
\newcommand{\bbQ}{\mathbb{Q}}
\newcommand{\bbR}{\mathbb{R}}
\newcommand{\eps}{\varepsilon}
\author{Nataliya Goncharuk, Konstantin Khanin, and Yury Kudryashov}
\date{\today}
\title{Circle homeomorphisms with breaks with no \(C^{2-\nu}\) conjugacy}
\begin{document}

\maketitle

\begin{abstract}
  The rigidity theory for circle homeomophisms with breaks was studied intensively in the last 20 years.
  It was proved~\cite{Khanin_Khmelev_2003,Khanin_Teplinsky_2007,Khanin_Kocic_2017,Khanin_Kocic_Mazzeo_2017} that under mild conditions of the Diophantine type on the rotation number any two \(C^{2+\alpha}\) smooth circle homeomorphisms with a break point are \(C^1\) smoothly conjugate to each other, provided that they have the same rotation number and the same size of the break.
  In this paper we prove that the conjugacy may not be \(C^{2-\nu}\) even if the maps are analytic outside of the break points.
  This result shows that the rigidity theory for maps with singularities is very different from the linearizable case of circle diffeomorphisms where conjugacy is arbitrarily smooth, or even analytic, for sufficiently smooth diffeomorphisms.
\end{abstract}
\section{Introduction}
The rigidity theory for circle homeomorphisms with singularities and closely related renormalization theory for such maps was a subject of intensive studies in the last 20 years~\cite{Avila_2013,Guarino_Martens_Welington_2018,Estevez_Faria_Guarino_2018,Gorbovickis_Yampolsky_2020,Yampolsky_2002,Yampolsky_2003,deFaria_deMelo_1999,deFaria_deMelo_2000,Cunha_Smania_2013,Cunha_Smania_2014,Khanin_Kocic_2018}.
In majority of the papers the following two classes of singularities were studied:
critical circle maps, where a map \(f\) is smooth but the derivative vanishes at one point \(f'(x_{cr})=0\), and circle maps with breaks, where a map \(f\) is smooth outside of one point \(x_{br}\) and it has a jump discontinuity of the first derivative at \(x_{br}\).
The type of singularity can be characterized by the order \(\gamma>1\) of the critical point, or by the size of the break \(c=f'(x_{br}-0)/f'(x_{br}+0)\).
The main result of the rigidity theory can be formulated in the following way:
if two maps \(f,g\) are topologically conjugate to each other, that is they have the same irrational rotation number \(\rho\), and they also have the same type of singularity, namely the same \(\gamma\), or the same \(c\), then the conjugacy which maps the singular point into the singular point is \(C^1\) smooth, provided the maps \(f,g\) are sufficiently smooth outside of their singularities.
For circle maps with a break point, one also has to impose some mild Diophantine conditions on the rotation number \(\rho\).
For critical maps, it is conjectured, and in many cases proved, that Diophantine conditions for \(C^1\) rigidity are not needed.
For a more restrictive class of rotation numbers, which includes irrational numbers of bounded type, one can prove that the conjugacy is \(C^{1+\epsilon}\) smooth for some \(\epsilon>0\), see~\cite{deFaria_deMelo_1999,deFaria_deMelo_2000}.
We should add that all the rigidity results in the critical case are proved only for \(\gamma\) being an odd integer number greater than 1, although it is generally believed that the rigidity statements remain valid for all \(\gamma>1\).

The rigidity theory is closely related to renormalization theory.
One first has to prove the exponential convergence of renormalization which is the crucial step in establishing rigidity.
Notice that convergence of renormalization normally holds for all irrational rotation numbers, and Diophantine conditions are only used to prove rigidity results.

The rigidity theory for maps with singularities is also related to the linearization theory (Herman theory) for smooth circle diffeomophisms~\cite{He79,Yoccoz_1984}.
The linearization theory says that any smooth enough circle diffeomorphism with a typical irrational rotation number~\(\rho\) can be smoothly linearized, that is smoothly conjugate to the linear rotation \(f_\rho\colon x\mapsto x+\rho\) by the angle \(\rho\).
Of course, the linear rotation here is just the simplest representative of the class of smooth diffeomophisms with rotation number~\(\rho\).
As above, typical rotation number means that certain Diophantine conditions on \(\rho\) are satisfied.
The important feature of the linearization theory is the fact that a conjugacy can be shown to be arbitrarily smooth if the map \(f\) is smooth enough.
Usually one must first prove the \(C^1\) smoothness of the conjugacy, and then upgrade it to higher smoothness using the smoothness of the map.
Such an upgrade is well understood, and is based on some kind of bootstrap technique.
To be more precise let us formulate the following theorem due to Katznelson and Ornstein~\cite{Katznelson_Ornstein_1989}.
We say that the rotation number \(\rho\) belongs to the Diophantine class \(D_\delta\) if there exists a constant \(C>0\) such that for all nonzero \(q\in \bbZ\) and \(p\in \bbZ\)
\[
  |q\rho-p|\geq \frac{C}{q^{1+\delta}}.
\]
\begin{theorem}
  [Katznelson, Ornstein, 1989]
  Let \(f\in C^k, k>2, k\in\bbR\) and \(\rho=\rho(f) \in D_\delta\).
  Assume that \(k-1-\delta>1\).
  Then \(f\) is linearizable,
  \[
    h\circ f\circ h^{-1}= f_\rho,
  \]
  and the conjugacy \(h\) is \(C^{k-1-\delta -\epsilon}\)-smooth for arbitrarily small \(\epsilon>0\).
\end{theorem}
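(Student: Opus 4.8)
The plan is to construct the conjugacy from the unique invariant measure of $f$, translate the regularity of $h$ into that of a solution of a cohomological equation over $f$, and then carry out a multi-scale estimate fueled by a priori bounds and the Diophantine condition. For the construction, $f\in C^2$ with irrational $\rho$ falls under Denjoy's theorem, so $f$ is minimal and uniquely ergodic; letting $\mu$ be its invariant probability measure and $h(x)=\mu([0,x])$ we obtain the unique conjugacy $h\circ f\circ h^{-1}=f_\rho$ with $h(0)=0$, so linearizability is automatic and only the regularity of $h$ is at stake. For the $C^1$ statement, the Denjoy--Koksma inequality applied to $\log f'$ (of bounded variation, with necessarily vanishing $\mu$-average) bounds $\log(f^{q_n})'$ uniformly in $n$ and $x$ by the total variation of $\log f'$, where $q_n$ denote the denominators of the convergents of $\rho$; combined with the Diophantine condition this yields Herman's a priori (``real'') bounds and in particular that $\mu$ has a positive continuous density $\phi=h'$. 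Setting $\psi=\log\phi$, invariance of $\mu$ is equivalent to the cohomological equation $\psi-\psi\circ f=\log f'$, and iteration gives $\psi-\psi\circ f^{q_n}=\eta_n$ with $\eta_n:=\log(f^{q_n})'$. It now suffices to show $\psi\in C^{k-2-\delta-\epsilon}$, for then $h$ is $C^{k-1-\delta-\epsilon}$.

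The second part is a bounded-geometry argument. From the a priori bounds and $\log f'\in C^{k-1}$ one derives distortion control in $C^{k-1}$: on each element of the $n$-th dynamical partition $\mathcal P_n$, the branch of the relevant iterate of $f$ carrying it onto another element has $C^{k-1}$-norm bounded independently of $n$ once domain and image are rescaled to unit length, so the rescaled Birkhoff sums $\eta_n$ have uniformly bounded $C^{k-1}$-norms on partition elements, while the lengths $|I_n|$ decay geometrically. One then estimates the $C^r$-seminorms of $\psi$ by joining two nearby points by an orbit segment and decomposing it along the Ostrowski/continued-fraction hierarchy of scales $q_0<q_1<\dots$; scale $n$ contributes a term controlled by the rescaled $C^{k-1}$-bound above times an appropriate power of $|I_n|$, and one sums over scales. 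Here the arithmetic of $\rho$ enters quantitatively: $|q_n\rho-p_n|\ge Cq_n^{-1-\delta}$ forces $q_{n+1}\lesssim q_n^{1+\delta}$, hence $|I_n|\gtrsim q_n^{-1-\delta}$, and balancing the geometric gain supplied by the $k-1$ derivatives of $\log f'$ against this lower bound makes the series converge with Hölder exponent $k-1-\delta-\epsilon$, which is possible precisely under the hypothesis $k-1-\delta>1$.

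The crux is this last estimate: organizing the telescoping over scales so that scale $n$ is bounded \emph{sharply}, which forces one to control higher derivatives of the long iterates $f^{q_n}$ through the $C^{k-1}$-distortion estimates and to keep careful track of the combinatorics of the partitions $\mathcal P_n$, and then to sum so as to extract exactly the exponent $k-1-\delta-\epsilon$ rather than a lossier one. A conceptually cleaner but logistically heavier alternative is a KAM/Newton scheme after Herman and Yoccoz: at each step solve a linearized equation $u-u\circ f_\rho=v$, losing $1+\delta+\epsilon_j$ derivatives by the small-divisor estimate, with the quadratic convergence making the total loss $1+\delta+\epsilon$; this scheme, however, converges only after $f$ has been brought into a small $C^k$-neighborhood of a rotation, so it must be prefaced by a non-perturbative global reduction step, which is exactly what the invariant-density argument above provides directly.
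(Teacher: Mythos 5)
This theorem is not proved in the paper at all: it is quoted as background, attributed to Katznelson and Ornstein, with the proof living entirely in the cited reference. So there is no in-paper argument to compare yours against, and your proposal has to be judged on its own. As a roadmap it does follow the general strategy of the actual proof --- build $h$ from the invariant measure, pass to the cohomological equation $\psi-\psi\circ f=\log f'$ for $\psi=\log h'$, and run a multi-scale estimate over the dynamical partitions that balances the $C^{k-1}$ smoothness of $\log f'$ against the Diophantine lower bound on $|q_n\rho-p_n|$. Your arithmetic bookkeeping ($q_{n+1}\lesssim q_n^{1+\delta}$, hence $|I_n|\gtrsim q_n^{-1-\delta}$, and the balance condition $k-1-\delta>1$) is consistent with the claimed exponent.

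But there are two genuine gaps, and they are exactly where the work is. First, the step ``Denjoy--Koksma plus the Diophantine condition yields Herman's a priori bounds and in particular that $\mu$ has a positive continuous density $\phi=h'$'' is not a deduction. Denjoy--Koksma gives $\sup_x|\log(f^{q_n})'(x)|\le \var(\log f')$ uniformly in $n$, i.e.\ bounded distortion, but passing from that to absolute continuity of $\mu$ with a continuous positive density requires proving that $(f^{q_n})'\to 1$ (equivalently, decay of the Birkhoff sums $\eta_n$), which is the hard non-perturbative content of Herman's theorem in the form Katznelson and Ornstein must establish; it uses both $k>2$ and the arithmetic of $\rho$ and cannot be waved through. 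Second, you yourself write ``the crux is this last estimate'' --- the sharp summation over scales that extracts the exponent $k-1-\delta-\epsilon$ rather than a lossier one --- and then do not carry it out; the claimed uniform rescaled $C^{k-1}$ bounds on partition elements and the precise way each scale's contribution is dominated are asserted, not proved. Since everything else in the argument is either standard or deferred to these two points, what you have is a correct outline of the known proof, not a proof.
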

It was proved in several cases~\cite{Khanin_Teplinsky_2009,Sinai_Khanin_1989}, and is likely to be true in general (D. Ornstein, private communication) that arbitrarily small \(\epsilon\) can be removed from the theorem.

In view of the above behaviour in the diffeomorphism case it is natural to ask whether higher smoothness of the conjugacy holds in the case of circle maps with singularities.
In this paper we prove that it is not the case for maps with breaks.
Namely, we prove that for any break size \(c\neq 1\) there exists a positive constant \(\nu(c)<1\) such that for an arbitrary irrational rotation number \(\rho\) there exist analytic maps \(f\), \(g\) with the break size \(c\) and rotation number \(\rho\) which are not \(C^{2-\nu(c)}\) conjugate to each other.
We also conjecture that similar statement holds for critical circle maps.

Related questions were studied in~\cite{Khanin_Kocic_2013} and~\cite{Kocic_2016}.
In the first paper it was shown that robust rigidity, which was established in the case of critical circle maps~\cite{Khanin_Teplinsky_2007}, does not hold for maps with a break point.
In other words, \(C^{1}\) rigidity requires certain conditions of the Diophantine type. In~\cite{Kocic_2016} it was proved that for Lebesgue generic rotation numbers \(C^{1+\alpha}\) rigidity does not hold.
The proof uses the growth rate of partial quotients $a_n$ in the continued fraction representation for $\rho=[a_1,a_2, \dots, a_n, \dots]$.
Our approach is completely different.
In particular, the main result holds for rotation numbers of bounded type for which the conjugacy is $C^{1+\alpha}$ smooth for some $\alpha>0$, but it does not belong to the class $C^{2-\nu}$, as we show in this paper.
Below we present a new general obstruction mechanism for higher smoothness rigidity.
This new mechanism allows us to prove the main result for all irrational rotation numbers.

In the next five sections we formulate and prove the main theorem.
We finish with concluding remarks and open problems in \autoref{sec:concluding}.

\section{Statement of the main theorem}
We will say that a circle homeomorphism is of class \(B^r(p, c)\), \(r=1,\dotsc,\infty,\omega\), if it is \(C^r\) smooth everywhere except at the unique break point \(x_{br}=p\), and the break size \(f'(p-0)/f'(p+0)\) equals \(c\).
We shall always assume that \(c\neq 1\).
Let \(B^{r}_{irr}(p, c)\) be the set of circle homeomorphisms \(f\in B^{r}(p, c)\) that have irrational rotation numbers.

Denote by \(S(f)\) the Schwarzian derivative of~\(f\):
\[
  S(f)(x)=\frac{f'''(x)}{f'(x)}-\frac{3}{2}\left(\frac{f''(x)}{f'(x)}\right).
\]

The main result of this paper is the following theorem and its corollary.
\begin{theorem}%
  \label{thm:main}
  For every \(c\ne 1\) there exists a number \(\nu(c)\in(0, 1)\) such that the following property holds.
  Let \(f \in  B^3(p, c)\) be a circle homeomorphism  with an irrational rotation number \(\rho\) and negative Schwarzian derivative \(S(f)\) at all points of smoothness.
  Let \(g\) be the unique linear fractional circle map of the class \(B^{3}(p, c)\) with the rotation number \(\rho\).
  Then \(f\) is not \(C^{1+\alpha}\) conjugate to \(g\) for any \(\alpha>1-\nu(c)\).
\end{theorem}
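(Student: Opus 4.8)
The plan is to localize the loss of smoothness at the break point $p$ and to convert the sign of the Schwarzian derivative of $f$ into a quantitative obstruction to $h'$ being H\"older of exponent close to $1$ near $p$. First I would record the elementary facts valid for \emph{any} $C^{1}$ conjugacy $h$ with $h\circ f=g\circ h$: a break is a $C^{1}$-invariant, so $h$ carries the break point of $f$ to that of $g$; since both are at $p$ and a homeomorphism commuting with a minimal circle map and fixing a point must be the identity, $h(p)=p$ is forced and $h$ is uniquely determined by $f$ and $g$. Consequently $h$ maps the $f$-orbit of $p$ onto the $g$-orbit of $p$, the $n$-th dynamical partition of $f$ onto that of $g$, and the break points of $f^{q_{n}}$ — exactly the points $f^{-j}(p)$, $0\le j<q_{n}$, each of break size $c$ — onto those of $g^{q_{n}}$.

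\emph{The invariant that separates $f$ from $g$.} The condition $S(f)<0$ means $(f')^{-1/2}$ is convex on each interval of smoothness, a property inherited by all iterates: $\bigl((f^{q_{n}})'\bigr)^{-1/2}$ is convex on every smoothness interval of $f^{q_{n}}$, whereas $\bigl((g^{q_{n}})'\bigr)^{-1/2}$ is \emph{affine} there because $g$ is linear fractional. Passing to cross-ratios, write $\mathrm{Cr}$ for the cross-ratio and $\mathrm{CrD}(\phi;\cdot)=\mathrm{Cr}(\phi(\cdot))/\mathrm{Cr}(\cdot)$ for its distortion. Linear fractional maps preserve $\mathrm{Cr}$, so $\mathrm{CrD}(g^{q_{n}};\vec y)=1$ for any quadruple $\vec y$ inside a smoothness interval of $g^{q_{n}}$; negative Schwarzian makes $\mathrm{CrD}(f^{q_{n}};\vec x)$ lie \emph{strictly on one side of $1$} for quadruples inside smoothness intervals, and for a quadruple straddling a single break point, after dividing out the smooth parts there remains an honest break contribution which, for balanced quadruples, equals $\tfrac{(1+c)^{2}}{4c}$ — bounded away from $1$ by an amount depending only on $c$. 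The identity $h\circ f^{N}=g^{N}\circ h$ gives the cocycle relation
\[
  \mathrm{CrD}\bigl(h;f^{N}\vec x\bigr)\,\mathrm{CrD}\bigl(f^{N};\vec x\bigr)=\mathrm{CrD}\bigl(g^{N};h\vec x\bigr)\,\mathrm{CrD}\bigl(h;\vec x\bigr),
\]
so transporting a quadruple by the dynamics multiplies $\mathrm{CrD}(h;\cdot)$ by the sign-definite factor $\mathrm{CrD}(g^{N};h\vec x)/\mathrm{CrD}(f^{N};\vec x)$.

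\emph{The obstruction, and where the difficulty lies.} For each large $n$ I would pick a quadruple $\vec x_{n}$ assembled from the $f$-orbit of $p$, at scale $\ell_{n}\to0$ around $p$ and straddling $p$, and track it along the closest-return (renormalization) dynamics: $\vec x_{n}$ and its images repeatedly straddle break points of the relevant iterate of $f$. On the $g$-side the corresponding contributions are purely linear fractional and, once break points are matched through $h$, nearly align; on the $f$-side each crossing contributes the break amount \emph{together with} a negative-Schwarzian correction, and the point is that all these corrections — being cross-ratio distortions of maps with $S<0$ — have one and the same sign, so they accumulate rather than cancelling, either among themselves or against the error terms coming from the a priori only-$C^{1}$ behaviour of $h$. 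Carried out carefully this yields $|\mathrm{CrD}(h;\vec x_{n})-1|\gtrsim \ell_{n}^{\,1-\nu(c)}$ for infinitely many $n$, with $\nu(c)\in(0,1)$ read off from the break constant $\tfrac{(1+c)^{2}}{4c}$ and the universal bounded geometry of the dynamical partition; in particular $\nu(c)\to0$ as $c\to1$. Since a $C^{1+\alpha}$ diffeomorphism obeys $|\mathrm{CrD}(h;\vec x_{n})-1|\lesssim\ell_{n}^{\alpha}$, the two estimates collide once $\alpha>1-\nu(c)$, which gives the theorem. (The endpoint case $\alpha=1$ — no $C^{2}$ conjugacy — already follows from the Schwarzian cocycle $S(h)=S(f^{N})+\bigl(S(h)\circ f^{N}\bigr)\bigl((f^{N})'\bigr)^{2}$ and the fact that $S(f^{N})=\sum_{j<N}(S(f)\circ f^{j})\bigl((f^{j})'\bigr)^{2}$ is a telescoping sum of terms of one sign tending to $-\infty$, forcing $S(h)$ to be infinite; the finer statement needs the scale-by-scale accumulation.) The main obstacle is precisely this last step: arranging the quadruples so that the negative-Schwarzian corrections add with a fixed sign instead of oscillating, getting enough crossings before the scale $\ell_{n}$ is exhausted, and bounding each break contribution from below uniformly in $n$ using only the $C^{1}$-regularity of $h$; this requires the sharp distortion and bounded-geometry estimates for circle maps with breaks — available for every irrational $\rho$ — and it is from that bookkeeping that the exponent $\nu(c)$, and its membership in $(0,1)$, emerges.
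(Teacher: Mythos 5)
Your starting invariants are the right ones---the cross-ratio distortion cocycle, its sign-definite contribution of size \(\asymp|J|^{2}\) per crossing coming from \(S(f)<0\), its exact vanishing for the Möbius iterates of \(g\) away from the break, and the \(O(\ell^{\alpha})\) bound on the distortion of a \(C^{1+\alpha}\) conjugacy; these are precisely the paper's \autoref{lem:C2-hom}, \autoref{lem:DLCR-S} and \autoref{lem:sum-sqr-le}. But the proposal has a genuine gap at exactly the step you flag as the main obstacle: you give no mechanism that makes the accumulated Schwarzian correction as large as \(\ell_{n}^{1-\nu}\) for a quadruple at scale \(\ell_{n}\). Since \(S(f)\) is bounded, each crossing at scale \(\ell\) contributes only \(O(\ell^{2})\), so after \(q_{n}\) closest-return steps the total correction is \(\sum_{k<q_{n}}|f^{k}(J)|^{2}\), whose only generic lower bound (Cauchy--Schwarz) is \(\asymp 1/q_{n}\). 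If the test interval has the natural size \(|J|\asymp 1/q_{n}\)---which is all that ``bounded geometry'' gives, e.g.\ for bounded-type \(\rho\)---your intended collision reads \(1/q_{n}\lesssim q_{n}^{-\alpha}\), which holds for every \(\alpha\le 1\) and yields no restriction whatsoever. The entire content of the paper's argument is therefore the dynamical estimate your proposal lacks: among the intervals \(\Delta_{n-1}^{k}\) of the \(n\)-th dynamical partition, the \emph{shortest} is exponentially small compared with its invariant measure, \(\min_{k}|\Delta_{n-1}^{k}|=O(\gamma_{2}(c)^{n}\mu_{n-1})\), while the \emph{longest} is at least of order \(r(c)^{n}\) (\autoref{thm:zeta-decay}); feeding the shortest interval into the cocycle inequality and interpolating between the two lower bounds \(r(c)^{2n}\) and \(1/q_{n}\) is what produces \(\nu(c)=\ln\gamma_{2}(c)/(2\ln r(c)+\ln\gamma_{2}(c))\). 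Establishing that estimate occupies most of the paper (convergence of renormalizations to the Möbius family, uniform convexity and distortion bounds), and it does not come from the break constant \((1+c)^{2}/4c\), contrary to where you say \(\nu(c)\) is ``read off.''

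A secondary but real problem is your choice of quadruples straddling the break. Since \(g\) has a break of the same size \(c\) at \(h(p)=p\), the order-one break contributions appear on both sides of the cocycle identity and must cancel to precision \(\ell_{n}^{1-\nu}\); controlling that cancellation requires exactly the fine regularity of \(h\) near \(p\) that you are trying to disprove. The paper sidesteps this circularity by choosing \(J_{n}\) inside the shortest \(\Delta_{n-1}^{k}\) so that no iterate \(f^{k}(J_{n})\), \(0\le k<q_{n}\), contains \(p\) in its interior; then \(\xi_{g^{q_{n}}}(h(J_{n}))=0\) exactly and only the Schwarzian term survives on the \(f\)-side.
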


\begin{remark}
  Here and below a map~\(h\) is said to be \(C^{1+\alpha}\), \(0<\alpha\le 1\), if it is continuously differentiable and its derivative is Hölder continuous with exponent~\(\alpha\).
\end{remark}

\begin{remark}
  Note that the constant \(\nu(c)\) depends only on \(c\neq 1\), and does not depend on \(f,g\) or \(\rho\).
\end{remark}
\begin{corollary}%
  \label{cor:no-conj-lin-frac}
  Consider a linear fractional circle homeomorphism \(g\in B^{\omega}_{irr}(p, c)\).
  Then there exists a family of circle homeomorphisms \(f_\eps \in  B^\omega(p, c)\), \(\eps \in \bbR\), such that \(f_{0}=g\), \(f_\eps\) is continuous in \(\eps\) in \(C^\infty([p, p+1])\) topology, \(\rho(f_\eps)=\rho\) for all  \(\eps \in \bbR\), but none of the maps \(f_\eps\), \(\eps\neq0\), are \(C^{1+\alpha}\) conjugate to \(g\) with \(\alpha>1-\nu(c)\).

  Moreover, \(f_\eps\) continuously depends on \(\eps\) as an analytic map of some neighborhood of \([p, p+1]\) in \(\bbC\).
\end{corollary}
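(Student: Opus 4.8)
The plan is to derive \autoref{cor:no-conj-lin-frac} from \autoref{thm:main}: it suffices to produce, for small \(t>0\), analytic circle homeomorphisms \(f_t\in B^\omega(p,c)\) with rotation number \(\rho:=\rho(g)\), with \(S(f_t)<0\) at every point of smoothness, and with \(f_0=g\), since then \autoref{thm:main} immediately shows that each \(f_t\), \(t>0\), is not \(C^{1+\alpha}\) conjugate to \(g\) for \(\alpha>1-\nu(c)\) --- recall that by the uniqueness built into its statement \(g\) \emph{is} the linear fractional circle map of class \(B^3(p,c)\) with rotation number \(\rho\). Two points make this delicate. First, \(g\) is linear fractional, so \(S(g)\equiv0\), and the perturbation must push the Schwarzian strictly below~\(0\). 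Second, this cannot be done by composing \(g\) with a circle diffeomorphism \(h\): for such an \(h\) the function \((h')^{-1/2}\) is \(1\)-periodic, so \(S(h)\le0\) everywhere (equivalently \(((h')^{-1/2})''\ge0\)) would force \((h')^{-1/2}\) to be a periodic convex function, hence constant, hence \(h\) a rotation, while \(S(h\circ g)=(S(h)\circ g)(g')^2\) would have to be negative on the whole punctured circle. One must instead perturb inside a fundamental interval and let the break point absorb a derivative mismatch.

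Concretely, I would work on \([p,p+1]\), put \(\phi:=g|_{[p,p+1]}\) (an analytic increasing diffeomorphism with \(\phi'((p+1)-0)=c\,\phi'(p+0)\)), fix an analytic \(\eta\) on \([p,p+1]\) with \(\eta(p)=\eta(p+1)=0\), and let \(f_{t,s}\) be the circle homeomorphism determined by \(\phi_{t,s}:=\phi+t\eta+s\) (a homeomorphism once \(t\) is small enough that \(\phi'+t\eta'>0\)). Since adding a constant changes no derivative, \(f_{t,s}\) has break point \(p\), its break size is \(c\) for all \(t\) exactly when \(\eta'((p+1)-0)=c\,\eta'(p+0)\), and \(S(\phi_{t,s})=S(\phi+t\eta)\) is independent of \(s\). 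It remains to choose \(\eta\) --- under the constraints \(\eta(p)=\eta(p+1)=0\) and the break relation --- so that \(S(\phi+t\eta)<0\) on the closed interval \([p,p+1]\) for all small \(t>0\); analyticity and compactness then let this survive the \(O(t^2)\) remainder. Here I would use the identity \(((\psi')^{-1/2})''=-\tfrac12 S(\psi)(\psi')^{-1/2}\), so that \(S(\phi)\equiv0\) makes \(u_0:=(\phi')^{-1/2}\) affine (and non-constant, since \(c\ne1\)); with \(w:=\eta'u_0^{-3}\) one gets \(((\phi_{t,s}')^{-1/2})''=-\tfrac t2 w''+O(t^2)\), so it is enough that \(w''<0\) on \([p,p+1]\), while the two remaining constraints become \(\int_p^{p+1}w\,u_0^{-3}=0\) and \(w(p+1)=c^{-1/2}w(p)\). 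Taking \(w(x)=-(x-p)^2+\alpha x+\beta\) --- whose second derivative is \(-2\) for any \(\alpha,\beta\) --- and solving for \(\alpha,\beta\) (perturbing \(w\) within the open cone of strictly concave functions in the non-generic case), then integrating to recover \(\eta\), yields \(f_{t,s}\in B^\omega(p,c)\) with \(S(f_{t,s})<0\) at all points of smoothness, for every \(t\) in some interval \((0,t_0)\).

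To fix the rotation number I would invoke monotonicity: for fixed \(t\in(0,t_0)\) the map \(s\mapsto\rho(f_{t,s})\) is continuous, non-decreasing, and satisfies \(\rho(f_{t,s+1})=\rho(f_{t,s})+1\), so its range over \(s\in[0,1]\) is an interval of length~\(1\) and contains a value equal to \(\rho\) modulo~\(1\); since \(\rho\) is irrational there is no mode locking, so that value is attained at a \emph{unique} \(s(t)\), which depends continuously on \(t\) by a standard compactness argument and satisfies \(s(0)=0\). Setting \(f_t:=f_{t,s(t)}\) gives \(f_0=g\), \(\rho(f_t)=\rho\) for all \(t\in[0,t_0)\), and \(f_t\in B^\omega(p,c)\) with negative Schwarzian for \(t>0\), and \(t\mapsto f_t\) is continuous in the \(C^\infty([p,p+1])\) topology; since \(\phi\) and \(\eta\) extend holomorphically to a common complex neighborhood of \([p,p+1]\) on which \(\phi+t\eta+s(t)\) stays univalent for small~\(t\), the dependence is also continuous as a family of analytic maps of that neighborhood. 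Finally, reparametrizing \(t=\psi(\eps)\) by any continuous \(\psi\colon\bbR\to[0,t_0)\) with \(\psi^{-1}(0)=\{0\}\) produces \(f_\eps:=f_{\psi(\eps)}\) with all the asserted properties, and \autoref{thm:main} applied to each \(f_\eps\), \(\eps\ne0\), finishes the proof. I expect the main obstacle to be the construction in the second paragraph: finding an \emph{admissible} perturbation \(\eta\) --- compatible with \(\eta(p)=\eta(p+1)=0\) and the break relation --- that forces the Schwarzian to be negative on the entire punctured circle, which, as noted, is exactly what composition with diffeomorphisms cannot achieve and is where the presence of the break is essential.
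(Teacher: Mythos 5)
Your proposal is correct and follows the same top-level route as the paper: reduce to \autoref{thm:main} by exhibiting an analytic family in \(B^\omega(p,c)\) with the prescribed rotation number, equal to \(g\) at the origin, and with strictly negative Schwarzian off the break point. Where you differ is in how the Schwarzian-negative perturbation is manufactured. The paper writes \(f_\eps=R_\eps\circ\bar f_\eps\) with \(\bar f_\eps(x)=\frac{e^{\eps x}-1}{e^\eps-1}\) and \(R_\eps\) Möbius: since \(S(R_\eps)=0\) and \(S(\bar f_\eps)=-\eps^2/2\) identically, negativity of the Schwarzian is exact and explicit, the break size is matched by one derivative condition on \(R_\eps\), and the rotation number is tuned by the remaining additive constant in \(R_\eps\) — no perturbative expansion is needed. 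You instead take an additive perturbation \(\phi+t\eta+s\) and run a first-order analysis through the linearization \(((\psi')^{-1/2})''=-\tfrac12 S(\psi)(\psi')^{-1/2}\), choosing \(w\) concave subject to two linear constraints (which, one can check, are always solvable: the relevant \(2\times2\) determinant is a nonvanishing integral, so your ``perturb in the non-generic case'' fallback is not even needed), and then absorb the \(O(t^2)\) remainder by compactness. Both arguments tune the rotation number by the same monotonicity-plus-irrationality device. Your version is more laborious but also more flexible — it shows that \emph{any} admissible direction \(\eta\) with \((\eta'(\phi')^{3/2})''<0\) works, and your preliminary remark that no composition \(h\circ g\) with a circle \emph{diffeomorphism} \(h\) can have everywhere-negative Schwarzian (a periodic convex \((h')^{-1/2}\) is constant) is a genuine insight absent from the paper, explaining why the break must absorb the perturbation. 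One notational slip: you define \(w:=\eta' u_0^{-3}\), but your subsequent constraints \(\int w\,u_0^{-3}=0\), \(w(p+1)=c^{-1/2}w(p)\), and the identity \(\eta'u_0^3=w\) used in the expansion all require \(w=\eta' u_0^{3}\) (equivalently \(\eta'=w u_0^{-3}\)); with that correction the computation closes.
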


Let us deduce \autoref{cor:no-conj-lin-frac} from \autoref{thm:main}.
\begin{proof}
  [Proof of \autoref{cor:no-conj-lin-frac}.]
  Without loss of generality we may and will assume that \(p=0\).

  Let \(\bar f_\eps\in B^{\omega}(0, e^{\eps})\) be given by \(\bar f_\eps(x)=\frac{e^{\eps x}-1}{e^\eps-1}\) for \(\eps\neq0\) and \(\bar f_0(x)=x\).
  Note that \(\bar f_\eps\) maps \(0\) to \(0\), \(1\) to \(1\), continuously depends on \(\eps\), and has Schwarzian derivative \(S(\bar f_\eps)=S(e^{\eps x})=-\frac{\eps^{2}}{2}<0\).

  Let \(f_\eps\) be given by \(f_\eps=R_\eps \circ \bar f_\eps\), where \(R_\eps\) is a fractional linear map that we shall choose later.
  Note that, since \(S(R_\eps)=0\), we have \(S(f_\eps)=S(\bar f_\eps)=-\frac{\eps^{2}}{2}<0\) for all \(\eps\neq0\).
  Hence, by \autoref{thm:main} it suffices to find \(R_\eps\) such that that \(f_\eps\in B^\omega(0, c)\), \(\rho (f_\eps)=\rho (g)\), and \(f_\eps\) is continuous in \(\eps\).
  Equivalently, we need a continuous family of fractional linear maps \(R_\eps\), \(R_0=\id\), such that \(R_\eps(1)=R_\eps(0)+1\), \(\frac{R_\eps'(1)}{R_\eps'(0)}=ce^{-\eps}\), and \(\rho(R_\eps\circ \bar f_\eps)=\rho (g)\).
  The first two conditions define \(R_\eps\) up to an additive constant, and there is a unique choice of such a constant that leads to the prescribed rotation number \(\rho (g)\).
  Continuity of \(R_\eps\) in \(\eps\) easily follows from the continuity of \(\bar f_\eps\) and standard properties of the rotation number.
\end{proof}

\section{Schwarzian derivative as a mixed partial derivative}\label{sec:notation}

To avoid considering cases or adding unnecessary inequalities as assumptions, denote by \([x..y]\), \(x, y \in \bbR\), the interval \([\min(x, y), \max(x, y)]\) joining \(x\) to \(y\).

Given a smooth map \(f\colon I\to\bbR\), put
\begin{equation}
  \label{eq:tilde-f}
  \tilde f(x, y)=
  \begin{cases}
    \ln\frac{f(y)-f(x)}{y-x}, & \text{if \(x \ne y\);}\\
    \ln f'(x),                & \text{if \(x=y\).}
  \end{cases}
\end{equation}
For \(f\in B^{r}(p,c)\), the function \(\tilde f\) is defined on \(I\times I\), \(I=[p, p+1]\).
For \(x=y=p\) and \(x=y=p+1\) we take the right and the left derivative of \(f\), respectively.

Function~\(\tilde f\) describes how does \(f\) change the length of the interval \([x..y]\) in the logarithmic scale.
Given four points \(x_{1}, x_{2}, x_{3}, x_{4}\), a map \(f\) changes the logarithm of their cross-ratio by
\begin{multline}
  \ln\left(\frac{f(x_{4})-f(x_{3})}{f(x_{3})-f(x_{1})}:\frac{f(x_{4})-f(x_{2})}{f(x_{2})-f(x_{1})}\right)-
  \ln\left(\frac{x_{4}-x_{3}}{x_{3}-x_{1}}:\frac{x_{4}-x_{2}}{x_{2}-x_{1}}\right)=\\
  \tilde f(x_{3}, x_{4})-\tilde f(x_{3}, x_{1})+\tilde f(x_{2}, x_{1})-\tilde f(x_{2}, x_{4}).
\end{multline}

In particular, the limit of the left hand side as \(x_{3}\to x_{4}\) and \(x_{2}\to x_{1}\) exists and is equal to
\begin{multline}
  \lim_{\substack{x_{3}\to x_{4}\\x_{2}\to x_{1}}}\ln\left(\frac{f(x_{4})-f(x_{3})}{f(x_{3})-f(x_{1})}:\frac{f(x_{4})-f(x_{2})}{f(x_{2})-f(x_{1})}\right)-
  \ln\left(\frac{x_{4}-x_{3}}{x_{3}-x_{1}}:\frac{x_{4}-x_{2}}{x_{2}-x_{1}}\right)=\\
  \tilde f(x_{1}, x_{1})+\tilde f(x_{4}, x_{4})-2\tilde f(x_{1}, x_{4}) =: \xi_{f}(x_{1}, x_{4}).
\end{multline}

Since \(\xi_{f}\) is a symmetric function, we will use notation \(\xi_{f}(J)\) for \(\xi_{f}(a, b)\), \(J=[a, b]\).
Our proof of \autoref{thm:main} will be based on some estimates on \(\xi_{f}\), and we will use the following simple formulas for \(\widetilde{f_{1} \circ f_{2}}\) and \(\xi_{f_{1}\circ f_{2}}\).
\begin{align}
  \label{eq:F-comp}
  \widetilde{f_{1} \circ f_{2}}(x, y)&=\tilde f_{1}(f_{2}(x), f_{2}(y))+\tilde f_{2}(x, y),\\
  \label{eq:xi-comp}
  \xi_{f_{1}\circ f_{2}}(J) &=\xi_{f_{1}}(f_{2}(J))+\xi_{f_{2}}(J).
\end{align}

It is easy to see that linear fractional maps preserve cross-ratio.
Hence, \(\xi_f(J)=0\) if \(f\) is a linear fractional map.
If \(f\) is \(C^{2}\) smooth on~\(I\), then \(\tilde f\) is \(C^{1}\) smooth on \(I\times I\).
If \(x\ne y\), then the mixed partial derivative of \(\tilde f\) is given by
\[
  \frac{\partial^{2}\tilde f}{\partial x\partial y}=\frac{f'(x)f'(y)}{{(f(y)-f(x))}^{2}}-\frac{1}{{(y-x)}^{2}}.
\]
For a \(C^{3}\) smooth function \(f\) this function extends to a continuous function on \(I\times I\), and on the diagonal we have
\begin{equation}
  \label{eq:Sf-mixed-partial}
  \left.\frac{\partial^{2}\tilde f}{\partial x\partial y}\right|_{y=x}=\frac{1}{6}S(f)(x),
\end{equation}
where
\[
  S(f)(x)=\frac{f'''(x)}{f'(x)}-\frac{3}{2}{\left(\frac{f''(x)}{f'(x)}\right)}^{2}
\]
is the Schwarzian derivative of~\(f\).

For analytic maps~\eqref{eq:Sf-mixed-partial} is stated, e.g., in~\cite{Hawley_Schiffer_1966} but it is easy to verify~\eqref{eq:Sf-mixed-partial} for any \(C^{3}\) smooth function \(f\) by substituting Taylor series expansions for \(f(y)\) and \(f'(y)\).

In particular, for \(f\) from \autoref{thm:main}, for some \(s>0\) we have
\begin{equation}
  \label{eq:dF-lt}
  \frac{\partial^{2}\tilde f}{\partial x\partial y}<-s
\end{equation}
in some neighborhood of the diagonal \(x=y\).
Indeed, take \(s\) such that \(S(f)<-6s\) everywhere, then~\eqref{eq:Sf-mixed-partial} implies~\eqref{eq:dF-lt}.
\section{Preliminary estimates}\label{sec:estimates}
In this section we remind some well-known estimates on the behaviour of the cross-ratio under smooth conjugacy.
For completeness we provide proofs of these estimates.
We also restate them in terms of \(\tilde f\) and \(\xi_{f}\) introduced above.

We start with the following observation.
\begin{lemma}%
  \label{lem:C2-hom}
  Let \(h\colon I\to\bbR\) be a \(C^{1+\alpha}\) smooth map with positive derivative, \(0<\alpha\le 1\).
  Then for any \(J \subset I\) we have \(\xi_{h}(J)=O\left({|J|}^{\alpha}\right)\) as \(|J|\to 0\).
  Moreover, if \(h\) is a \(C^{2}\) smooth function, then \(|\xi_{h}(J)|=o(|J|)\) as \(|J|\to0\).
\end{lemma}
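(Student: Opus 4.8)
The plan is to unwind the definition of $\xi_h$ into an explicit logarithm and then control it using the positivity and smoothness of $h'$. Writing $J=[a,b]$ with $a<b$ (legitimate since $\xi_h$ is symmetric) and using the formulas for $\tilde h$, one obtains
\[
  \xi_h(J)=\ln h'(a)+\ln h'(b)-2\ln\frac{h(b)-h(a)}{b-a}=\ln\frac{h'(a)\,h'(b)}{m_J^{2}},
\]
where $m_J:=\frac{h(b)-h(a)}{b-a}=\frac1{|J|}\int_J h'(t)\,dt$ is the average of $h'$ over $J$, so $\min_J h'\le m_J\le\max_J h'$. We may assume $I$ is compact (otherwise restrict to a compact subinterval, which is all that matters for the $|J|\to0$ asymptotics); then $\mu\le h'\le M$ on $I$ for some constants $0<\mu\le M<\infty$.

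For the first claim, I would use that $h'$ is $\alpha$-Hölder: $|h'(x)-h'(y)|\le K|x-y|^{\alpha}$ for $x,y\in I$. This gives $|h'(a)-h'(b)|\le K|J|^{\alpha}$ and, since $m_J$ averages values $h'(t)$ each within $K|J|^{\alpha}$ of $h'(a)$, also $|h'(a)-m_J|\le K|J|^{\alpha}$. Dividing by $h'(a)\ge\mu$ yields $h'(a)h'(b)/m_J^{2}=1+O(|J|^{\alpha})$, hence $\xi_h(J)=\ln\bigl(1+O(|J|^{\alpha})\bigr)=O(|J|^{\alpha})$.

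For the second claim, the Hölder argument with $\alpha=1$ already gives $O(|J|)$, so the real task is to extract the extra cancellation that turns this into $o(|J|)$. I would expand $h'$ around the \emph{midpoint} $c:=(a+b)/2$ of $J$: since $h\in C^{2}$, the function $h''$ is uniformly continuous with some modulus $\omega$, so uniformly for $t\in J$
\[
  h'(t)=h'(c)+h''(c)(t-c)+\theta(t),\qquad |\theta(t)|\le\omega(|J|)\,|J|=o(|J|).
\]
Because $c$ is the midpoint, the linear term integrates to zero, so $m_J=h'(c)+o(|J|)$; and $h'(a)h'(b)=h'(c)^{2}+o(|J|)$, the remaining contributions being $O(|J|^{2})$ or $o(|J|)$. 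Dividing by $h'(c)^{2}\ge\mu^{2}$ gives $h'(a)h'(b)/m_J^{2}=1+o(|J|)$, whence $\xi_h(J)=\ln\bigl(1+o(|J|)\bigr)=o(|J|)$.

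The routine parts here are entirely standard; the one point that needs genuine care is this last refinement, where one must simultaneously exploit that integrating the linear part of $h'$ over an interval symmetric about its center contributes nothing to $m_J$, and that the quadratic term $-h''(c)^{2}|J|^{2}/4$ appearing in the product $h'(a)h'(b)$ is only $O(|J|^{2})$. The uniform bounds $\mu\le h'\le M$, coming from positivity and compactness, are what allow us to pass freely between the ratio $h'(a)h'(b)/m_J^{2}$ and its logarithm.
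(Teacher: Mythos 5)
Your proof is correct and follows essentially the same route as the paper's: the Hölder modulus of $h'$ (equivalently of $\ln h'$) gives the $O(|J|^{\alpha})$ bound once the difference quotient is recognized as a value/average of $h'$ on $J$, and the $o(|J|)$ refinement comes from expanding around the midpoint so that the linear terms cancel. The only cosmetic difference is that the paper uses the Mean Value Theorem where you use the integral average $m_J=\frac{1}{|J|}\int_J h'$, which is an equally valid bookkeeping choice.
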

\begin{proof}
  Recall that for \(J=[x,y]\subset I\) we have
  \[
    \xi_{h}(J)=\tilde h(x, x)+\tilde h(y, y)-2\tilde h(x, y)=\ln h'(x)+\ln h'(y)-2\ln\frac{h(y)-h(x)}{y-x}.
  \]

  The first inequality immediately follows from the Mean Value Theorem.
  Take \(a\in[x,y]\) such that \(h'(a)=\frac{h(y)-h(x)}{y-x}\).
  Since \(h'>0\) on \(I\), the function \(\ln h'\) is Hölder continuous as well, thus we have
  \begin{align*}
    |\xi_{h}(J)|&=|\ln h'(x)+\ln h'(y)-2\ln h'(a)|\\
                &\le|\ln h'(x)-\ln h'(a)|+|\ln h'(y)-\ln h'(a)|\\
                &=O\left({|x-a|}^{\alpha}\right)+O\left({|y-a|}^{\alpha}\right)\\
                &=O\left({|x-y|}^{\alpha}\right).
  \end{align*}

  Now let us prove that for a \(C^{2}\) smooth map \(h\), \(h'>0\), we have \(\xi_{h}(J)=o(|J|)\).
  Since \(\ln h'\) is a \(C^{1}\) smooth function, we have
  \[
    \ln h'(x)+\ln h'(y)=2\ln h'\left(\frac{x+y}{2}\right)+o(|J|),
  \]
  thus it suffices to show that
  \[
    \ln h'\left(\frac{x+y}{2}\right)-\ln\frac{h(y)-h(x)}{y-x}=o(|J|).
  \]
  Since \(\ln\) is an analytic function and both arguments are bounded away from \(0\) and \(\infty\), it suffices to show that
  \[
    h'\left(\frac{x+y}{2}\right)-\frac{h(y)-h(x)}{y-x}=o(|J|).
  \]
  This estimate holds for any \(C^{2}\) smooth function \(h\), and it can be easily proved by rewriting \(h(x)\) and \(h(y)\) using Taylor's theorem with center \(\frac{x+y}{2}\) and Lagrange form of the remainder.
\end{proof}
\begin{corollary}%
  \label{cor:C2-conj}
  Let \(f\) and \(g\) be two piecewise differentiable circle homeomorphisms conjugate by a diffeomorphism \(h\), \(h \circ f = g \circ h\).
  Then for  \(n \in \bbN\) and a closed interval \(J\) we have
  \begin{align*}
    \left|\xi_{f^{n}}(J)-\xi_{g^{n}}(h(J))\right|&=O\left({|J|}^{\alpha}+{|f^{n}(J)|}^{\alpha}\right),&\text{if }h&\in C^{1+\alpha}(S^{1});\\
    \left|\xi_{f^{n}}(J)-\xi_{g^{n}}(h(J))\right|&=o\left(|J|+|f^{n}(J)|\right),&\text{if }h&\in C^{2}(S^{1}).
  \end{align*}
  In particular, if \(f\) and \(g\) are as in \autoref{thm:main} and \(p\notin \bigcup_{k=0}^{n-1}f^{k}(J)\), then \(\xi_{g^{n}}(h(J))=0\), and we have
  \begin{align*}
    \left|\xi_{f^{n}}(J)\right|&=O\left({|J|}^{\alpha}+{|f^{n}(J)|}^{\alpha}\right),&\text{if }h&\in C^{1+\alpha}(S^{1});\\
    \left|\xi_{f^{n}}(J)\right|&=o\left(|J|+|f^{n}(J)|\right),&\text{if }h&\in C^{2}(S^{1}).
  \end{align*}
  Here all estimates hold as \(|J|\to 0\) uniformly in \(n\).
\end{corollary}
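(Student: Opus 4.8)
The plan is to rewrite \(\xi_{g^{n}}(h(J))\) as \(\xi_{f^{n}}(J)\) plus terms built only from \(h\), and then to estimate those terms with \autoref{lem:C2-hom}. From \(h\circ f=g\circ h\) one gets, by induction, \(g^{n}=h\circ f^{n}\circ h^{-1}\); here and below I pass to lifts \(\bbR\to\bbR\) so that the composition formula~\eqref{eq:xi-comp} applies literally. Applying~\eqref{eq:xi-comp} to \(g^{n}=h\circ\bigl(f^{n}\circ h^{-1}\bigr)\) on the interval \(h(J)\), and then once more to \(f^{n}\circ h^{-1}\), and using \(h^{-1}(h(J))=J\) and \((f^{n}\circ h^{-1})(h(J))=f^{n}(J)\), this yields
\[
  \xi_{g^{n}}(h(J))=\xi_{h}\bigl(f^{n}(J)\bigr)+\xi_{f^{n}}(J)+\xi_{h^{-1}}\bigl(h(J)\bigr),
\]
so that
\[
  \xi_{f^{n}}(J)-\xi_{g^{n}}(h(J))=-\xi_{h}\bigl(f^{n}(J)\bigr)-\xi_{h^{-1}}\bigl(h(J)\bigr).
\]

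Next, both \(h\) and \(h^{-1}\) are circle diffeomorphisms with positive derivative of the same regularity as \(h\): \(h\in C^{1+\alpha}(S^1)\) implies \(h^{-1}\in C^{1+\alpha}(S^1)\), and \(h\in C^{2}(S^1)\) implies \(h^{-1}\in C^{2}(S^1)\); moreover \(h\) is Lipschitz, so \(|h(J)|=O(|J|)\). Hence \autoref{lem:C2-hom} applied to \(h\) on \(f^{n}(J)\) and to \(h^{-1}\) on \(h(J)\) bounds the right-hand side of the last display by \(O\bigl(|J|^{\alpha}+|f^{n}(J)|^{\alpha}\bigr)\) when \(h\in C^{1+\alpha}(S^1)\) and by \(o\bigl(|J|+|f^{n}(J)|\bigr)\) when \(h\in C^{2}(S^1)\), which is precisely the first pair of estimates. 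These bounds are uniform in \(n\): the constants, and the modulus of continuity hidden in \(o(\cdot)\), that \autoref{lem:C2-hom} produces depend only on the \(C^{1+\alpha}\) (resp.\ \(C^{2}\)) norms of \(h\) and \(h^{-1}\), while only the lengths \(|J|\) and \(|f^{n}(J)|\) enter the right-hand side.

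For the last part, I first note that \(h(p)=p\): conjugating \(f\) by the \(C^1\) diffeomorphism \(h\) preserves \(C^1\)-smoothness away from \(h(p)\) and produces a genuine break at \(h(p)\), whereas the unique break of \(g\) is at \(p\). Consequently \(g^{k}(h(J))=h\bigl(f^{k}(J)\bigr)\) for every \(k\), and the hypothesis \(p\notin\bigcup_{k=0}^{n-1}f^{k}(J)\) is equivalent to \(p\notin\bigcup_{k=0}^{n-1}g^{k}(h(J))\). Since \(g\) coincides with a single linear fractional branch on a neighbourhood of any closed interval avoiding \(p\), the iterate \(g^{n}\) coincides with one linear fractional map on a neighbourhood of \(h(J)\); as linear fractional maps preserve the cross-ratio, \(\xi_{g^{n}}(h(J))=0\). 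Substituting this into the estimates of the previous paragraph yields the stated bounds for \(\bigl|\xi_{f^{n}}(J)\bigr|\).

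I expect no genuine obstacle here: the whole argument is a two-step use of~\eqref{eq:xi-comp} followed by an appeal to the already-proved \autoref{lem:C2-hom}. The only points needing a little care are the passage to lifts so that~\eqref{eq:xi-comp} applies verbatim, the fact that \(h^{-1}\) inherits the smoothness of \(h\) and that \(h\) is Lipschitz, the identity \(h(p)=p\) in the last part, and—for the \(C^{2}\) statement—that the \(o(\cdot)\) should be read with small parameter \(|J|+|f^{n}(J)|\), so that \autoref{lem:C2-hom} applies to both error terms at once and uniformly in \(n\).
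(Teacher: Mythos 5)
Your proof is correct and follows essentially the same route as the paper's (which simply cites \autoref{lem:C2-hom}, the identity \(h\circ f^{n}=g^{n}\circ h\), and~\eqref{eq:xi-comp}): the only cosmetic difference is that you decompose \(g^{n}=h\circ f^{n}\circ h^{-1}\) and so must also invoke \autoref{lem:C2-hom} for \(h^{-1}\), whereas applying~\eqref{eq:xi-comp} to both sides of \(h\circ f^{n}=g^{n}\circ h\) gives \(\xi_{f^{n}}(J)-\xi_{g^{n}}(h(J))=\xi_{h}(J)-\xi_{h}(f^{n}(J))\) directly, with no need to discuss the regularity of \(h^{-1}\) (note in any case that \(\xi_{h^{-1}}(h(J))=-\xi_{h}(J)\), so your formula is literally the same). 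Your treatment of the second part (\(h(p)=p\), hence \(g^{n}\) is a single Möbius branch on \(h(J)\) and \(\xi_{g^{n}}(h(J))=0\)) matches the paper's.
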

\begin{proof}
  The first part of the corollary follows from~\autoref{lem:C2-hom}, \(h \circ f^{n}=g^{n} \circ h\), and~\eqref{eq:xi-comp}.

  In order to prove the second part, note that the conjugacy \(h\) maps the break point \(p\) to itself, hence we have \(p\notin \bigcup_{k=0}^{n-1}g^{k}(h(J))\).
  Therefore, \(g^{n}\) is a linear fractional map on \(h(J)\), thus \(\xi_{g^{n}}(h(J))=0\), and the estimates follow from the first part.
\end{proof}

Due~to~\eqref{eq:xi-comp}, we can rewrite \(\left|\xi_{f^{n}}(J)\right|\) as a sum,
\begin{equation}
  \label{eq:xi-pow}
  \left|\xi_{f^{n}}(J)\right|=\left|\sum_{k=0}^{n-1}\xi_{f}(f^{k}(J))\right|.
\end{equation}

The following lemma provides a negative upper bound for each summand \(\xi_{f}(f^{k}(J))\).
\begin{lemma}%
  \label{lem:DLCR-S}
  Consider a circle homeomorphism \(f \in B^3(p, c) \) such that \(S(f)<0\) on \([p,p+1]\).
  Then there exist positive \(\eps\) and \(s\) such that for any interval \(J\subset[p, p+1]\), \(|J|<\eps\) we have
  \[
    \xi_{f}(J)\le -s|J|^{2}.
  \]
\end{lemma}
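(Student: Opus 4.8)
The plan is to realize \(\xi_f(J)\) as a double integral of the mixed partial derivative \(\frac{\partial^2\tilde f}{\partial x\partial y}\) and then bound the integrand using negativity of the Schwarzian derivative.

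The starting point is the identity
\[
  \xi_f([a,b])=\int_a^b\int_a^b\frac{\partial^2\tilde f}{\partial x\partial y}(x,y)\,dx\,dy,
\]
valid for every closed interval \([a,b]\subset I=[p,p+1]\). It follows at once from the definition \(\xi_f(a,b)=\tilde f(a,a)+\tilde f(b,b)-\tilde f(a,b)-\tilde f(b,a)\) (using that \(\tilde f\) is symmetric) by two applications of the fundamental theorem of calculus. The computation is legitimate because, as recorded after~\eqref{eq:Sf-mixed-partial}, for \(f\in B^3(p,c)\) the function \(\tilde f\) is \(C^1\) on \(I\times I\) and its mixed partial derivative extends to a continuous function there; here one uses that the restriction of \(f\) to the closed interval \(I\) is genuinely \(C^3\) (with one-sided derivatives at the endpoints), the break being felt only when one crosses \(p\) on the circle.

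Next I would fix the constants. Since \(f|_I\) is \(C^3\) and \(S(f)<0\) on the compact set \(I\), there is \(s>0\) with \(S(f)<-6s\) on all of \(I\). By~\eqref{eq:Sf-mixed-partial} the mixed partial derivative equals \(\tfrac16 S(f)<-s\) on the diagonal \(\{x=y\}\); since it is continuous on \(I\times I\) and the diagonal is compact, there is \(\varepsilon>0\) such that \(\frac{\partial^2\tilde f}{\partial x\partial y}(x,y)<-s\) whenever \(|x-y|<\varepsilon\). This is exactly estimate~\eqref{eq:dF-lt}, already derived in the excerpt. (It is essential that one argues on a neighbourhood of the diagonal: the mixed partial need not be negative on the rest of \(I\times I\).)

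Finally, for any \(J=[a,b]\subset I\) with \(|J|=b-a<\varepsilon\), every pair \((x,y)\in J\times J\) satisfies \(|x-y|<\varepsilon\), so the integrand above is \(<-s\) throughout \(J\times J\), and the identity gives
\[
  \xi_f(J)<\int_a^b\int_a^b(-s)\,dx\,dy=-s\,(b-a)^2=-s\,|J|^2,
\]
which is even slightly stronger than the claimed inequality. I do not expect any genuine obstacle here; the only points needing a little care are the justification of the double-integral identity up to the endpoints of \(I\) and the observation that \(\varepsilon\) must be produced by a compactness argument on the diagonal rather than on the whole square.
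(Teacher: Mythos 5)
Your proposal is correct and follows essentially the same route as the paper: both write \(\xi_f(J)\) as the double integral of \(\frac{\partial^{2}\tilde f}{\partial x\partial y}\) over \(J\times J\) via two applications of the Fundamental Theorem of Calculus, and bound the integrand by \(-s\) near the diagonal using~\eqref{eq:dF-lt}, which comes from \(S(f)<0\) and~\eqref{eq:Sf-mixed-partial}. Your extra remarks on compactness of the diagonal and on one-sided derivatives at the endpoints are sound but only make explicit what the paper leaves implicit.
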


\begin{proof}
  Take \(s>0\) and \(\eps>0\) such that \(\frac{\partial^{2}\tilde f}{\partial x\partial y}(x, y)\le -s\) for any \(x, y\in[p, p+1]\), \(|y-x|<\eps\), see~\eqref{eq:dF-lt} for details.
  Now consider an interval \(J=[x,y]\subset[p, p+1]\), \(|J|<\eps\), and apply the Fundamental Theorem of Calculus twice,
  \begin{align*}
    \xi_{f}(J)&=\tilde f(x, x)+\tilde f(y, y)-\tilde f(x, y)-\tilde f(y, x)\\
              &=\int_{x}^{y}\int_{x}^{y}\frac{\partial^{2}\tilde f}{\partial x\partial y}(a, b)\;da\;db\\
              &\le -s{|J|}^{2}.
  \end{align*}
\end{proof}

Summarizing \autoref{cor:C2-conj},~\eqref{eq:xi-pow}, and \autoref{lem:DLCR-S}, we get the following statement.
\begin{lemma}%
  \label{lem:sum-sqr-le}
  In the settings of \autoref{thm:main}, there exists \(\eps>0\) such that the following holds.
  Consider a number \(n \in \bbN\) and a closed interval \(J\) such that \(p\notin\bigcup_{k=0}^{n-1}\interior\left(f^{k}(J)\right)\) and \(|f^{k}(J)|\le \eps\), \(0\le k<n\).
  Then
  \begin{align*}
    \sum_{k=0}^{n-1}{\left|f^{k}(J)\right|}^{2}&=O\left({|J|}^{\alpha}+{|f^{n}(J)|}^{\alpha}\right), &\text{if }h&\in C^{1+\alpha}(S^{1});\\
    \sum_{k=0}^{n-1}{\left|f^{k}(J)\right|}^{2}&=o\left(|J|+|f^{n}(J)|\right), &\text{if }h&\in C^{2}(S^{1})
  \end{align*}
  as \(\max(|J|, |f^{n}(J)|)\to0\).
  Here both estimates are uniform in \(n\).
\end{lemma}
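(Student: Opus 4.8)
The plan is simply to assemble the three ingredients collected in this section. Fix the constants \(\eps>0\) and \(s>0\) provided by \autoref{lem:DLCR-S}, and let \(J\) and \(n\) be as in the statement. By~\eqref{eq:xi-pow} we have
\[
  \left|\xi_{f^{n}}(J)\right|=\left|\sum_{k=0}^{n-1}\xi_{f}\left(f^{k}(J)\right)\right|.
\]
Since \(|f^{k}(J)|\le\eps\) for every \(k<n\), \autoref{lem:DLCR-S} applies to each interval \(f^{k}(J)\) and gives \(\xi_{f}(f^{k}(J))\le -s\,|f^{k}(J)|^{2}\le 0\). All the summands therefore have the same sign, so the absolute value of the sum equals the sum of the absolute values, and we obtain
\[
  s\sum_{k=0}^{n-1}\left|f^{k}(J)\right|^{2}\le\sum_{k=0}^{n-1}\left|\xi_{f}\left(f^{k}(J)\right)\right|=\left|\xi_{f^{n}}(J)\right|.
\]

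Next I would bound \(|\xi_{f^{n}}(J)|\) from above, which is exactly what \autoref{cor:C2-conj} delivers. The only point needing care is that \autoref{cor:C2-conj} is stated under the hypothesis \(p\notin\bigcup_{k=0}^{n-1}f^{k}(J)\), whereas here we only assume \(p\notin\bigcup_{k=0}^{n-1}\interior(f^{k}(J))\). This weakening is harmless: the conjugacy \(h\) fixes \(p\), so the condition is equivalent to \(p\notin\bigcup_{k=0}^{n-1}\interior(g^{k}(h(J)))\); and whenever an interval does not contain \(p\) in its interior, the restriction of \(g\) to that interval is a single branch of a linear fractional map and hence preserves cross-ratios. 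Iterating, \(g^{n}\) is linear fractional on \(h(J)\), so \(\xi_{g^{n}}(h(J))=0\) just as in the proof of \autoref{cor:C2-conj}, and the estimates there give
\[
  \left|\xi_{f^{n}}(J)\right|=O\left(|J|^{\alpha}+|f^{n}(J)|^{\alpha}\right)\ \text{ or }\ \left|\xi_{f^{n}}(J)\right|=o\left(|J|+|f^{n}(J)|\right)
\]
according to whether \(h\in C^{1+\alpha}(S^{1})\) or \(h\in C^{2}(S^{1})\), with the stated uniformity in \(n\) inherited from \autoref{cor:C2-conj}. Dividing the previous displayed inequality by \(s\) then yields the claim.

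There is essentially no obstacle in this lemma: it is a bookkeeping step, and the only mildly delicate point is the endpoint issue just discussed. The substantive work lies elsewhere — in \autoref{lem:DLCR-S}, which converts the negative Schwarzian hypothesis into a quadratic \emph{lower} bound for \(-\xi_{f}\), and, I expect, in the sections that follow, where one must exhibit for the map \(f\) of \autoref{thm:main} a closed interval \(J\) and an exponent \(n\) for which the orbit sum \(\sum_{k=0}^{n-1}|f^{k}(J)|^{2}\) is too large to be consistent with the right-hand side above once \(\alpha>1-\nu(c)\).
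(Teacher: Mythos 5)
Your proposal is correct and follows exactly the route the paper intends: the paper gives no separate proof of this lemma beyond the sentence ``Summarizing \autoref{cor:C2-conj},~\eqref{eq:xi-pow}, and \autoref{lem:DLCR-S}, we get the following statement,'' and your argument is precisely that assembly, with the lower bound \(s\sum_k|f^k(J)|^2\le|\xi_{f^n}(J)|\) from the fixed sign of the summands and the upper bound from \autoref{cor:C2-conj}. Your handling of the interior-versus-closed-interval point (that \(g^n\) is still a single Möbius branch on \(h(J)\) when \(p\) is only an endpoint) is a correct and worthwhile clarification of a detail the paper leaves implicit.
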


\section{Decay of the intervals of dynamical partitions}%
\label{sec:decay-interv-part}

In this section we prove some estimates on the lengths of the intervals of dynamical partitions for a circle homeomorphism with a break.
We use convergence of renormalizations to strengthen some well-known estimates on these lengths: we show that bounds depend only on \(c\), not on \(f\) or \(\rho(f)\).
While the proof of \autoref{thm:main} only uses these estimates for \(f\in B^3_{irr}(p, c)\), we state and prove them for a larger class of homeomorphisms.
Namely, put
\[
  B^{2+}_{irr}(p, c)=\bigcup_{\eps>0}B^{2+\eps}_{irr}(p, c).
\]

Let \(f\) be a circle homeomorphism with an irrational rotation number \(\rho = [a_1,a_2,\dots,a_n, \dots ]\).
Recall that the sequence of dynamical partitions \(\zeta_n\) for~\(f\) with a base point \(p\) is defined in the following way.
Put \(\Delta_n^0=[p..f^{q_n}(p)]\), \(\Delta_{n}^{k}=f^{k}\Delta_{n}^{0}\), \(n, k \in \bbN\), where \(q_n\) are the denominators of the continued fraction convergents \(\frac{p_n}{q_n}=[a_1,a_2,\dots,a_n]\).
Fix \(n\) and consider two sequences of intervals \(\set{\Delta_{n-1}^k | 0\leq k<q_n}\) and \(\set{\Delta_{n}^m | 0\leq m<q_{n-1}}\).
It is a simple combinatorial fact~\cite{Sinai_Khanin_1989} that together these two sequences form a partition of the unit circle, which we denote by \(\zeta_n\).
In particular, the interiors of the elements of \(\zeta_{n}\) are pairwise disjoint.

The main result of this section is the following theorem.
It says that all intervals of \(\zeta_{n}\) are exponentially small, and the shortest of the intervals \(\Delta_{n-1}^{k}\in\zeta_{n}\), \(0\le k<q_{n}\), is exponentially smaller than its invariant measure.
\begin{theorem}%
  \label{thm:zeta-decay}
  There exist universal constants \(\gamma_{1}(c), \gamma_{2}(c)\in (0, 1)\) such that for any circle homeomorphism \(f\in B^{2+}_{irr}(p, c)\), the following asymptotic estimates hold:
  \begin{align}
    \label{eq:zeta-max-delay}
    \max_{\Delta\in\zeta_{n}}|\Delta|&=O({\gamma_{1}(c)}^{n});\\
    \label{eq:zeta-min-delay}
    \min_{0\le k<q_{n}}|\Delta_{n-1}^{k}|&=O({\gamma_{2}(c)}^{n}\mu_{n-1}),
  \end{align}
  where \(\mu_{n-1}=|q_{n-1}\rho-p_{n-1}|\) is the invariant measure of each \(\Delta_{n-1}^{k}\).
\end{theorem}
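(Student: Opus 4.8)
The plan is to combine two ingredients: (a) the classical a~priori (``real'') bounds for circle maps with a break, valid for every $f\in B^{2+}_{irr}(p,c)$ and obtained from the cross-ratio (Koebe) inequality for $C^{2+\eps}$ maps with a single break point; and (b) exponential convergence of the renormalizations of $f$ to the family $\mathcal{FL}_c$ of fractional-linear commuting pairs with break~$c$, see~\cite{Khanin_Khmelev_2003,Khanin_Teplinsky_2007}. Ingredient~(a) gives, for each fixed $f$, a constant $K=K(f)\ge 1$ such that adjacent elements of $\zeta_n$ have length ratios in $[K^{-1},K]$ and the return intervals satisfy $|\Delta_{n+1}^0|/|\Delta_{n-1}^0|\in[K^{-1},1-K^{-1}]$ and $|\Delta_n^0|/|\Delta_{n-1}^0|\in[K^{-1},1-K^{-1}]$, uniformly in~$n$; the distortion involved is bounded because over one renormalization step the relevant return map crosses $p$ only a bounded number of times, so the break contributes at most a factor $\max(c,c^{-1})^{O(1)}$. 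Ingredient~(b) upgrades this: the family $\mathcal{FL}_c$ is compact and parametrized by $c$ together with a compact set of additional moduli, so all renormalization-scale geometric quantities of its elements are bounded in terms of $c$ alone, and by exponential convergence there is $N=N(f)$ such that for $n\ge N$ the corresponding quantities for $f$ differ from those of the approximating fractional-linear pair by at most a factor~$2$. The finitely many levels $n<N$ affect only the implicit constants in the $O(\cdot)$ below, so from now on every geometric constant may be taken to depend on $c$ alone.

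To prove~\eqref{eq:zeta-max-delay} I would first note that every element of $\zeta_n$ is a union of at least two elements of $\zeta_{n+2}$: each $\Delta_{n-1}$-type interval of $\zeta_n$ is already properly subdivided in $\zeta_{n+1}$ (it is not an element of $\zeta_{n+1}$, which consists of $\Delta_n$- and $\Delta_{n+1}$-type intervals), while each $\Delta_n$-type interval of $\zeta_n$ persists in $\zeta_{n+1}$ and is properly subdivided in $\zeta_{n+2}$. By the $c$-dependent real bounds above, the subdivision of a given element $\Delta$ of $\zeta_n$ into elements of $\zeta_{n+2}$ has bounded geometry; in particular each child has a neighbour of comparable length inside $\Delta$, hence length at most $(1-\theta(c))|\Delta|$ for some $\theta(c)>0$. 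Therefore $\max_{\Delta\in\zeta_{n+2}}|\Delta|\le(1-\theta(c))\max_{\Delta\in\zeta_n}|\Delta|$, which gives~\eqref{eq:zeta-max-delay} with $\gamma_1(c)=\sqrt{1-\theta(c)}$.

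For~\eqref{eq:zeta-min-delay} the essential point is the break. One records first that $q_n\mu_{n-1}\asymp 1$ and that $\sum_{0\le k<q_n}|\Delta_{n-1}^k|$ is bounded below by a positive constant depending only on~$c$ (each $\Delta_n$-type interval of $\zeta_n$ has, inside its cell of $\zeta_{n-1}$, a distinct $\Delta_{n-1}$-type neighbour comparable to it), so $\max_k|\Delta_{n-1}^k|\ge \const(c)\,\mu_{n-1}$; but $\max_k|\Delta_{n-1}^k|$ need not be $O(\mu_{n-1})$ for Liouville~$\rho$, so one cannot simply compare the minimum with the maximum. Instead I would use ingredient~(b) directly: up to a bounded factor uniform in~$n$, the intervals $\Delta_{n-1}^k$ and the measure $\mu_{n-1}$ of $f$ coincide with the analogous quantities for the fractional-linear pair approximating $\mathcal R^{n-1}f$, and for a fractional-linear commuting pair with break~$c$ an explicit computation shows that the element of its dynamical partition adjacent to the break point has length at most $C\,\gamma_2(c)^{\,n}$ times its invariant measure, with $\gamma_2(c)=\min(c,c^{-1})<1$. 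This exponential factor is precisely the discrepancy, accumulated over $n$ renormalizations, between the ``Lebesgue'' and the ``invariant-measure'' normalizations of the renormalization window, whose rescaled endpoint is always~$p$. Since $\Delta_n^0$ and the element of $\zeta_{n+1}$ on the other side of $p$ are among the intervals over which the minimum in~\eqref{eq:zeta-min-delay} is taken at level $n+1$ or $n+2$, and their invariant measures ($\mu_n$, $\mu_{n+1}$) are at most $\mu_{n-1}$ up to bounded factors, this yields~\eqref{eq:zeta-min-delay}, after possibly enlarging~$\gamma_2$.

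The main obstacle is that all the rates must be made to depend on $c$ alone. Ingredient~(a) by itself already gives both estimates with \emph{some} geometric rate, but that rate degenerates as the modulus of continuity of $\ln f'$ worsens, and — for~\eqref{eq:zeta-min-delay} — as $\rho$ becomes Liouville; the only available remedy is to invoke exponential convergence of renormalization and the explicit geometry of the fractional-linear limit family. Thus the real work is to quote the convergence theorem with the right regularity and, for~\eqref{eq:zeta-min-delay}, to carry out the explicit fractional-linear computation of the break-induced gap between length and invariant measure. The remaining steps — the distortion estimates, the summation of error terms against~\eqref{eq:zeta-max-delay}, and the continued-fraction bookkeeping relating $\mu_{n-1}$, $\mu_n$ and $\mu_{n+1}$ — are routine.
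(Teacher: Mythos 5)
Your argument for \eqref{eq:zeta-max-delay} is essentially the paper's: a bounded-geometry subdivision of each element of \(\zeta_{n}\) into elements of \(\zeta_{n+2}\), with the constants made to depend on \(c\) alone via convergence of renormalizations (this is \autoref{lem:decay-zeta} and \autoref{cor:exp-decay-max}, resting on the uniform derivative bound of \autoref{lem:universal_derivative}). That part is fine.

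For \eqref{eq:zeta-min-delay} there is a genuine gap. Your entire mechanism is the claim that the partition element adjacent to the break point satisfies \(|\Delta|\le C\,\gamma_{2}(c)^{n}\mu(\Delta)\) with \(\gamma_{2}(c)=\min(c,c^{-1})\), justified only by an appeal to an unspecified ``explicit computation'' and an ``accumulated discrepancy'' heuristic. This claim is false for general irrational \(\rho\). Indeed, by \autoref{lem:an-ge}, at every level \(n\) with \(c_{n}=\hat c>1\) the Lebesgue ratio \(\alpha_{n}=|\Delta_{n}^{0}|/|\Delta_{n-1}^{0}|\) is bounded \emph{below} by a positive constant depending only on \(c\), whereas the invariant-measure ratio satisfies \(\mu_{n}/\mu_{n-1}\le 1/a_{n+1}\). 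Hence at such a level
\[
  \frac{|\Delta_{n}^{0}|/\mu_{n}}{|\Delta_{n-1}^{0}|/\mu_{n-1}}\;\ge\;\const(c)\,a_{n+1},
\]
so for Liouville rotation numbers with large partial quotients at those levels the ratio \(|\Delta_{n}^{0}|/\mu_{n}\) jumps \emph{up} by unbounded factors, and no uniform exponential decay of the break-point interval relative to its measure can hold. Since the theorem is asserted for all irrational \(\rho\), your route cannot close. (You correctly noticed that \(\max_{k}|\Delta_{n-1}^{k}|\) is not \(O(\mu_{n-1})\) for Liouville \(\rho\), but the same Liouville phenomenon defeats the break-point interval as well.)

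The paper's proof works cell by cell rather than at the break point. The break, propagated through renormalization, makes every renormalized return map uniformly convex at \emph{every} base point (\autoref{lem:everywhere_convexity}); this yields in each \(\Delta_{n-1}^{k}\) a ``long'' child of relative length \(\ge r(c)\) (\autoref{large_subinterval}). Two mechanisms then produce, inside each cell, a descendant whose length-to-measure ratio contracts by a definite factor: when a partial quotient exceeds \(A(c)\), the long child forces one of the many siblings to be short relative to its equal share of \(\mu\) (\autoref{cor:large_a_n}); when \(m(c)+1\) consecutive partial quotients are bounded by \(A(c)\), convexity forces \((f^{q_{n}})'\) away from \(1\) on a definite portion of the cell, creating a discrepancy between the Lebesgue and \(\mu\)-conditional distributions over the block (\autoref{lem:small_a_n}, \autoref{several_small_a_n}). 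Iterating block by block gives \eqref{eq:zeta-min-delay} with a rate depending only on \(c\). Your proposal has no analogue of the second mechanism, which is exactly what is needed on stretches of bounded partial quotients, and the first mechanism is absent as well; supplying them is the actual content of this part of the theorem.
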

Here the constants behind the \(O(\cdot)\) notation may depend on \(f\) but \(\gamma_{1}\) and \(\gamma_{2}\) depend only on \(c\), not on \(f\) or its rotation number.
We prove the first estimate in \autoref{cor:exp-decay-max} and the second estimate in \autoref{small_intervals}.
Then we deduce \autoref{thm:main} from \autoref{thm:zeta-decay} in \autoref{sec:proof-main}.

\subsection{Renormalizations and derivatives of \(f^{k}\)}
Given a circle homeomorphism with an irrational rotation number, we define renormalized maps \(f_n\), \(g_n\) which correspond to two branches of the first return map onto the fundamental interval \(I_n=\Delta^0_{n-1}\cup \Delta^0_{n}\) expressed in the renormalized coordinate \(z\).
Coordinate \(z\) corresponds to the affine change of variables such that \(z(p)=0\), \(z(f^{q_{n-1}}p)=-1\).
It is easy to see that
\begin{align*}
  f_n(z)&= A_n^{-1}\circ f^{q_n}\circ A_n(z), &z&\in [-1,0],\\
  g_n(z)&= A_n^{-1}\circ f^{q_{n-1}}\circ A_n(z),& z&\in [0,\alpha_n],
\end{align*}
where \(A_n\) is the change of coordinates from \(z\) to \(x\) and \(\alpha_n=|\Delta_n^0|/|\Delta^0_{n-1}|\).
Note that \(g_{n}\) differs from \(f_{n-1}|_{[-\alpha_{n-1}\alpha_{n}, 0]}\) by a linear change of coordinate,
\begin{equation}
  \label{eq:gn-fn-1}
  g_{n}(z)=-\frac{1}{\alpha_{n-1}}f_{n-1}(-\alpha_{n-1}z)
\end{equation}

The pair~\(R^nf=(f_n(z),g_n(z))\) is called \emph{the \(n\)-th step renormalization} of~\(f\).
On each step the renormalization transformation \(R\) transforms \(R^nf\) into \(R^{n+1}f\) corresponding to the first return map to the fundamental interval \(I_{n+1}\).
Notice that~\(R^{n}f\) can be viewed as a circle homeomorphism \(Rf_{n}\colon \sfrac{[-1, \alpha_{n}]}{(-1\sim \alpha_{n})}\to \sfrac{[-1, \alpha_{n}]}{(-1\sim \alpha_{n})}\).
This circle homeomorphism has two breaks, at \(0\) and at \(-1\sim\alpha_{n}\).
It is easy to see that the product of the sizes of these breaks is equal to \(c\),
\begin{equation}
  \label{eq:mul-break}
  \frac{g_{n}'(0)}{f_{n}'(0)}\cdot\frac{f_{n}'(-1)}{g_{n}'(\alpha_{n})}=c.
\end{equation}
Indeed, \(f_{n}\circ g_{n}=g_{n}\circ f_{n}=A_{n}^{-1}\circ f^{q_{n}+q_{n-1}}\circ A_{n}\), and the product of break sizes is equal to the break size of \(f^{q_{n}+q_{n-1}}\) at zero.

The following result was proved in~\cite{Khanin_Vul_1991} and~\cite{Teplinskii_Khanin_2004}; see also~\cite{Ghazouani_Khanin_2021} for a simpler proof of the estimate in \(C^{1}\) metric.
\begin{theorem}
  [{\cites[Proposition 4.6 and Theorem 2]{Khanin_Vul_1991}[Assertions 1, 2]{Teplinskii_Khanin_2004}[Theorem 4]{Ghazouani_Khanin_2021}}]\label{thm:dist-fnFn}
  For any \(f\in B^{2+}_{irr}(p, c)\), \(c\ne 1\), there exist real numbers \(C>0\), \(0<\lambda<1\) and a sequence \(v_{n}\) such that
  \begin{equation}
    \dist_{C^2[-1,0]}(f_n, F_{\alpha_n,v_n,c_n}(z)) \leq C\lambda^n;\label{eq:fn-Fn}
  \end{equation}
  Here and below
  \begin{align}
    c_{n}&=c^{{(-1)}^{n}}, & F_{\alpha,v,c}(z)&=\frac{\alpha+\sqrt{c}z}{1-vz}.
  \end{align}
  Moreover, if \(n\) is large enough, then \((\alpha_n,v_n) \in U_{c_{n}}\), where
  \begin{align*}
    U_c&=[0, \sqrt{c}]\times \left[\frac{\sqrt{c}-1}2, \sqrt{c} -1\right]&\text{if }c&>1\\
    U_c&=[0, \sqrt{c}]\times \left[\sqrt{c} -1, \frac{\sqrt{c}-1}2 \right]&\text{if }c&<1.
  \end{align*}
\end{theorem}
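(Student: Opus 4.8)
The plan is to read \autoref{thm:dist-fnFn} as an exponential convergence of renormalization and to prove it in the two customary stages: first \emph{a priori bounds} --- bounded geometry of the renormalized pairs $R^nf$ --- and then an \emph{exponential contraction}, i.e.\ exponential decay in $n$ of the nonlinearity of $f_n$. I would measure that nonlinearity by the cross-ratio distortion: by \eqref{eq:Sf-mixed-partial} the function $\xi_f$ is, on the diagonal, the Schwarzian up to the factor $\frac{1}{6}$, so a circle map is linear fractional precisely when all its cross-ratio distortions vanish. The two things to establish are therefore: (i) $f_n$ has uniformly bounded geometry, with a definite amount of non-affine behaviour forced by the break; and (ii) the cross-ratio distortion of $f_n$ is $O(\lambda^n)$ in $C^0[-1,0]$ for some $\lambda<1$. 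Point (ii) upgrades to $C^2$-closeness to a linear fractional map by solving the Schwarzian equation, which is \eqref{eq:fn-Fn}, while the identification of the limit with the specific family $F_{\alpha,v,c}$ and with the region $U_c$ is extracted from stage (i).

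For the a priori bounds I would invoke the classical theory for smooth circle maps with irrational rotation number: since $f\in B^{2+\eps}_{irr}(p,c)$ for some $\eps>0$, the Denjoy--Koksma inequality (applicable because $\ln f'$ has bounded variation, its only jump being at $p$) together with the cross-ratio (Koebe-type) distortion estimates control the nonlinearity of $f^{q_n}$ on the fundamental interval $I_n=\Delta^0_{n-1}\cup\Delta^0_n$, and in the $C^{2+\eps}$ setting this gives uniform $C^2[-1,0]$ bounds on $f_n=A_n^{-1}\circ f^{q_n}\circ A_n$, with $f_n'$ bounded away from $0$ and $\infty$ and $\alpha_n$ in a fixed compact subinterval of $[0,+\infty)$; a compactness argument then yields linear fractional limit maps with $f_n(0)=\alpha_n$. (The affine change $A_n$ reverses orientation for one parity of $n$, which is the source of the exponent in $c_n=c^{(-1)^n}$.) The break forces the limit to be genuinely non-affine: by \eqref{eq:mul-break} and \eqref{eq:gn-fn-1} the renormalized circle pair carries two real breaks whose product equals $c\ne1$, whereas for an affine pair this product would equal $1$; quantitatively, this keeps $v_n$ bounded away from $0$. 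Finally, writing out \eqref{eq:mul-break}, \eqref{eq:gn-fn-1} and the commutation $f^{q_n}\circ f^{q_{n-1}}=f^{q_{n-1}}\circ f^{q_n}$ for the limiting linear fractional maps at levels $n$ and $n-1$ gives a closed system whose solution identifies the leading numerator coefficient as $\sqrt{c_n}$ and confines $(\alpha_n,v_n)$ to $U_{c_n}$ for $n$ large.

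For the contraction, the key is the additivity \eqref{eq:xi-comp}: for $J\subset[-1,0]$,
\[
 \xi_{f_n}(J)=\xi_{f^{q_n}}(A_n(J))=\sum_{k=0}^{q_n-1}\xi_f\bigl(f^k(A_n(J))\bigr),
\]
and the analogous formula holds for the four-point cross-ratio distortion. Since $f$ is $C^{2+\eps}$, the Taylor/Mean-Value argument from the proof of \autoref{lem:C2-hom}, applied to $\ln f'$, gives $|\xi_f(\Delta)|=O(|\Delta|^{1+\eps})$ for every interval $\Delta$ --- the $C^{2+\eps}$ replacement for the bound $\frac{1}{6}\|S(f)\|_\infty|\Delta|^2$ of the $C^3$ case, cf.\ \autoref{lem:DLCR-S}. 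Because the intervals of $\zeta_n$ are pairwise disjoint and, by the a priori bounds, their lengths decay exponentially in $n$ for every irrational rotation number (the $f$-dependent decay, not the $c$-uniform statement of \autoref{thm:zeta-decay}, so there is no circularity), I obtain
\[
 \sum_{\Delta\in\zeta_n}|\Delta|^{1+\eps}\le\Bigl(\max_{\Delta\in\zeta_n}|\Delta|\Bigr)^{\eps}\sum_{\Delta\in\zeta_n}|\Delta|=O(\lambda^n),\qquad\lambda<1,
\]
so the cross-ratio distortion of $f_n$ is $O(\lambda^n)$ in $C^0[-1,0]$ (equivalently, in the $C^3$ case, $\|S(f_n)\|_{C^0}=O(\lambda^n)$). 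To finish, I would use that a map $\phi$ on $[-1,0]$ with $C^0$-small Schwarzian (or small cross-ratio distortion) and with a bounded, non-degenerate $2$-jet at an endpoint is $C^2$-close to the linear fractional map $\tilde\phi$ sharing that $2$-jet: writing $\phi=u_1/u_2$ with $u_i''+\frac{1}{2}S(\phi)\,u_i=0$, comparing by Gronwall with the solutions of $u''=0$ (whose ratios are exactly the linear fractional maps), and using the a priori bounds to keep $u_2$ away from $0$, one gets $\|\phi-\tilde\phi\|_{C^2[-1,0]}=O(\|S(\phi)\|_{C^0})$. Applying this to $\phi=f_n$ and combining with the identification from stage (i) yields \eqref{eq:fn-Fn}.

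The hard part will be stage (i): establishing uniform bounded geometry for maps with a break in a form strong enough to control $f_n$ in $C^2$ --- not just in $C^1$, where the argument of \cite{Ghazouani_Khanin_2021} is comparatively soft --- and to pin down the exact region $U_c$, which is precisely where one must exploit the hyperbolicity of renormalization for break maps, i.e.\ that $v_n$ stays bounded away from $0$. A secondary technical nuisance is that for $f\in B^{2+\eps}_{irr}(p,c)\setminus B^3(p,c)$ the Schwarzian $S(f)$ need not exist, so the whole contraction argument, and the final Schwarzian-equation estimate, must be carried out in terms of $\tilde f$ and $\xi_f$ throughout rather than in terms of $S(f)$.
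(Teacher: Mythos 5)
The paper does not prove \autoref{thm:dist-fnFn}: it is imported verbatim from the cited references (Khanin--Vul, Teplinskii--Khanin, Ghazouani--Khanin), so there is no internal proof to compare against. Judged on its own, your outline correctly reproduces the architecture of those external proofs --- a priori bounds plus exponential decay of the cross-ratio distortion --- and the central contraction mechanism you write down is the right one: by \eqref{eq:xi-comp}, \(\xi_{f_n}(J)=\sum_{k<q_n}\xi_f(f^k(A_nJ))\), the summands are \(O(|\Delta|^{1+\eps})\) for a \(C^{2+\eps}\) map (your Taylor computation is valid), and Hölder's trick \(\sum|\Delta|^{1+\eps}\le(\max|\Delta|)^{\eps}\sum|\Delta|\) over the disjoint intervals of \(\zeta_n\) gives the exponential rate. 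That much is sound and is genuinely the heart of the matter.

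However, the two steps you yourself flag as hard are left unproven, and they are not mere technicalities. First, the upgrade from \(C^0\)-smallness of \(\xi_{f_n}\) to the \emph{\(C^2\)} estimate \eqref{eq:fn-Fn}: your Sturm/Gronwall comparison via \(u''+\tfrac12 S(\phi)u=0\) needs \(S(f_n)\) to exist and be small in \(C^0\), but for \(f\in B^{2+\eps}\setminus B^3\) the Schwarzian does not exist, and smallness of \(\xi_{f_n}\) on all subintervals does not by itself control \(f_n''\) pointwise (it controls a double integral of the mixed partial of \(\tilde f_n\), i.e.\ an averaged second-order quantity). The known \(C^2\) proofs instead propagate the nonlinearity \(N(f)=f''/f'\) through the composition \(f^{q_n}\) using the cocycle identity for \(N\) together with the a priori bounds; without that (or an equivalent device) your argument only delivers \(C^1\)-closeness, which is the soft part. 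Second, the localization \((\alpha_n,v_n)\in U_{c_n}\): the commutation relations and \eqref{eq:mul-break} only identify the two-parameter \emph{form} of the limiting family \(F_{\alpha,v,c}\); the bound \(v_n\in\left[\frac{\sqrt{c}-1}{2},\sqrt{c}-1\right]\) (for \(c_n>1\)) is a dynamical statement about the invariant region of the renormalization operator on parameter space and does not follow from a ``closed algebraic system.'' As a roadmap your proposal matches the literature; as a proof it stops exactly where the cited papers start working.
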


Put \(\hat{c}=\max{(c,c^{-1})}\).
Next lemma provides a uniform convexity bound on \(f_{m}\) and \(g_{m}\) for large \(m\).
\begin{lemma}%
  \label{lem:universal_convexity}
  For any \(c\ne 1\), \(Q_2>2(\hat{c}^{3/2}-\hat{c})\), \(Q_1\in\left(0, \hat{c}^{-1}-\hat{c}^{-3/2}\right)\), and \(f\in B^{2+}_{irr}(p, c)\) for all \(m\) large enough we have
  \begin{subequations}%
    \label{eq:uniform-convexity}
    \begin{align}
      \label{eq:convexity-fm}
      Q_1&< |f_m''(z)|< Q_2, &z&\in [-1,0],\\
      \label{eq:convexity-gm}
      \alpha_{m-1}Q_1&< |g_m''(z)|< \alpha_{m-1}Q_2, &z&\in [0,\alpha_m].
    \end{align}
  \end{subequations}
  The second derivatives \(f_m''(z)\), \(g_m''(z)\) are positive if \(c_{m}>1\), and negative if \(c_{m}<1\).
\end{lemma}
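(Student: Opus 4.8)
The plan is to reduce the lemma to an explicit computation on the fractional‑linear family $F_{\alpha,v,c}(z)=\frac{\alpha+\sqrt c\,z}{1-vz}$ and then transfer the conclusion to $f_m$, $g_m$ via the $C^2$‑convergence of renormalization in \autoref{thm:dist-fnFn}. Differentiating twice,
\[
  F_{\alpha,v,c}'(z)=\frac{\sqrt c+v\alpha}{(1-vz)^2},\qquad
  F_{\alpha,v,c}''(z)=\frac{2v(\sqrt c+v\alpha)}{(1-vz)^3}.
\]
Since every $F_{\alpha,v,c}$ with $(\alpha,v)\in U_c$ is an orientation‑preserving homeomorphism, $\sqrt c+v\alpha=F_{\alpha,v,c}'(0)>0$, so on $[-1,0]$ the sign of $F_{\alpha,v,c}''$ is the sign of $v$, which is positive for $c>1$ and negative for $c<1$ by the definition of $U_c$. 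This already predicts the sign statements for $f_m$, $g_m$.

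Next I would extremize $|F_{\alpha,v,c}''(z)|$ over the compact set $z\in[-1,0]$, $(\alpha,v)\in U_c$, treating $c>1$ and $c<1$ separately. For $c>1$ one has $1-vz\in[1,\sqrt c]$, $\sqrt c+v\alpha\in[\sqrt c,c]$ and $2v\in[\sqrt c-1,2(\sqrt c-1)]$, giving $F_{\alpha,v,c}''(z)\in\bigl[c^{-1/2}-c^{-1},\,2(c^{3/2}-c)\bigr]$. For $c<1$, writing $\hat c=c^{-1}$, one gets $|F_{\alpha,v,c}''(z)|\in\bigl[\hat c^{-1}-\hat c^{-3/2},\,2(\hat c-\sqrt{\hat c})\bigr]$. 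The elementary inequality $2\sqrt{\hat c}\le\hat c+1$ (AM–GM) gives both $2(\hat c-\sqrt{\hat c})\le 2(\hat c^{3/2}-\hat c)$ and $\hat c^{-1}-\hat c^{-3/2}\le c^{-1/2}-c^{-1}$ when $c>1$ (here $\hat c=c$); hence in either case the range of $|F_{\alpha,v,c_m}''|$ over $[-1,0]\times U_{c_m}$ lies in a closed interval $[q_1,q_2]$ with $Q_1<q_1\le q_2<Q_2$, so there is $\delta>0$ with $Q_1+\delta\le|F_{\alpha,v,c_m}''(z)|\le Q_2-\delta$ uniformly over that set.

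Now \autoref{thm:dist-fnFn} gives, for $m$ large, $(\alpha_m,v_m)\in U_{c_m}$ and $\dist_{C^2[-1,0]}(f_m,F_{\alpha_m,v_m,c_m})\le C\lambda^m$. Choosing $m$ large enough that $C\lambda^m<\delta$ yields $Q_1<|f_m''(z)|<Q_2$ on $[-1,0]$ with $f_m''$ of the sign of $v_m$, i.e.\ \eqref{eq:convexity-fm} with the asserted sign. For $g_m$ I would differentiate \eqref{eq:gn-fn-1} twice to get $g_m''(z)=-\alpha_{m-1}f_{m-1}''(-\alpha_{m-1}z)$; since $\alpha_{m-1}\alpha_m\le\sqrt{c_{m-1}}\sqrt{c_m}=\sqrt{c_{m-1}c_m}=1$ for $m$ large (because $c_{m-1}c_m=1$), the argument $-\alpha_{m-1}z$ ranges over a subinterval of $[-1,0]$ when $z\in[0,\alpha_m]$, so applying \eqref{eq:convexity-fm} to $f_{m-1}$ gives $\alpha_{m-1}Q_1<|g_m''(z)|<\alpha_{m-1}Q_2$, which is \eqref{eq:convexity-gm}; and since $c_{m-1}$ and $c_m$ lie on opposite sides of $1$, the sign of $g_m''=-\alpha_{m-1}f_{m-1}''$ is positive exactly when $c_m>1$. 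The only mildly delicate point is the two‑case bookkeeping in the extremization and verifying that the thresholds $2(\hat c^{3/2}-\hat c)$ and $\hat c^{-1}-\hat c^{-3/2}$ are precisely the ones making the $c>1$ and $c<1$ estimates — and hence, through \eqref{eq:gn-fn-1}, the $f_m$‑ and $g_m$‑estimates — hold simultaneously; there is no genuine analytic obstacle, the entire content sits in the explicit family $F_{\alpha,v,c}$.
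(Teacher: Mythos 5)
Your proposal is correct and follows essentially the same route as the paper: compute \(F_{\alpha,v,c}''\) explicitly, bound its absolute value and sign uniformly over \(U_{c}\), transfer to \(f_m\) via the \(C^2\)-convergence of \autoref{thm:dist-fnFn}, and deduce \eqref{eq:convexity-gm} from \eqref{eq:gn-fn-1}. You merely spell out the case analysis over \(U_c\) that the paper declares immediate (your extremization in fact yields a slightly sharper lower bound for \(c>1\) than the stated threshold \(\hat c^{-1}-\hat c^{-3/2}\), which is consistent), and you correctly supply the small detail \(\alpha_{m-1}\alpha_m\le 1\) needed to apply the \(f_{m-1}\)-bound on \([-\alpha_{m-1}\alpha_m,0]\).
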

\begin{proof}
  It suffices to prove~\eqref{eq:convexity-fm}, then~\eqref{eq:convexity-gm} will follow from~\eqref{eq:convexity-fm} and~\eqref{eq:gn-fn-1}.
  Since \(\dist_{C^{2}[-1, 0]}(f_{m}, F_{\alpha_{m}, v_{m}, c_{m}})\to 0\) as \(m\to\infty\), it suffices to prove that for any \((\alpha, v)\in U_{c}\), \(\hat c=\max(c, 1/c)\), the second derivative \(F_{\alpha, v, c}''=\frac{2v(\alpha v+\sqrt{c})}{{(1-vz)}^{3}}\) has the same sign as \((c-1)\) and its absolute value belongs to the interval \(\left[\hat{c}^{-1}-\hat{c}^{-3/2}, 2(\hat{c}^{3/2}-\hat{c})\right]\).
  These estimates follow immediately from the definition of \(U_{c}\).
\end{proof}

From now on, we fix some \(Q_{1}(c)\) and \(Q_{2}(c)\) that satisfy assumptions of \autoref{lem:universal_convexity}.
Note that for sufficiently large \(m\) we have \(\alpha_{m-1}\le \sqrt{c_{m-1}}\), thus~\eqref{eq:convexity-gm} implies a uniform upper bound on the second derivative of \(g_{m}\),
\[
  |g_{m}''(z)|<\sqrt{\hat c}Q_{2}(c).
\]
On the other hand, if \(\alpha_{m-1}\) is very small, then we have no lower estimate on \(|g_{m}''(z)|\).
However, the next lemma shows that \(\alpha_{n}\) cannot be too small provided that \(c_{n}>1\).
\begin{lemma}
  [{cf.~\cite[Assertion 6]{Teplinskii_Khanin_2004}}]%
  \label{lem:an-ge}
  For any circle homeomorphism \(f\in B^{2+}_{irr}(p, c)\) and any \(\delta>0\), for sufficiently large \(n\) such that \(c_{n}=\hat c>1\) we have
  \begin{equation}
    \label{eq:an-ge}
    \alpha_{n}> \frac{\sqrt{c_{n}}-1}{4}-\delta.
  \end{equation}
\end{lemma}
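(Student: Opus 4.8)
The plan is to combine the $C^{2}$ closeness of $f_{n}$ to the linear fractional map $F_{n}:=F_{\alpha_{n},v_{n},c_{n}}$ provided by \autoref{thm:dist-fnFn} with the elementary fact that the renormalized circle homeomorphism $Rf_{n}$ has an irrational rotation number, hence no fixed point. The mechanism is that the inequality $F_{\alpha,v,c}(z)>z$ on $[-1,0]$, which holds because $Rf_{n}$ has no fixed point, forces a discriminant-type condition relating $\alpha$ and $v$; the constraint $(\alpha_{n},v_{n})\in U_{c_{n}}$ then converts it into the stated lower bound.

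First I would check that $f_{n}(z)>z$ for all $z\in[-1,0]$. Since $f_{n}$ is the branch on $[-1,0]$ of the orientation-preserving circle homeomorphism $Rf_{n}$ of $[-1,\alpha_{n}]/(-1\sim\alpha_{n})$, and the rotation number of this homeomorphism is irrational, $Rf_{n}$ has no fixed point; equivalently, the continuous function $z\mapsto f_{n}(z)-z$ has no zero on $[-1,0]$. It is positive at the endpoints: $f_{n}(0)-0=\alpha_{n}>0$, and $f_{n}(-1)+1>0$, since otherwise the point $-1\sim\alpha_{n}$ would be fixed by $Rf_{n}$. By the intermediate value theorem, $f_{n}(z)-z>0$ on all of $[-1,0]$; here one uses that $[-1,0]$ and its image $f_{n}([-1,0])$ both lie in the single fundamental domain $[-1,\alpha_{n}]$, so that $f_{n}(z)-z$ is the honest displacement of $Rf_{n}$.

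Next I would rewrite this for $F_{n}$. A direct computation gives
\[
  F_{n}(z)-z=\frac{v_{n}z^{2}+(\sqrt{c_{n}}-1)z+\alpha_{n}}{1-v_{n}z}=:\frac{N_{n}(z)}{1-v_{n}z},
\]
where on $[-1,0]$ one has $1\le 1-v_{n}z\le 1+v_{n}$. Since $\sup_{[-1,0]}|f_{n}-F_{n}|\le C\lambda^{n}$ by \autoref{thm:dist-fnFn}, the previous step yields $F_{n}(z)-z\ge -C\lambda^{n}$, hence $N_{n}(z)\ge -(1+v_{n})C\lambda^{n}$, for all $z\in[-1,0]$. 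For $n$ large, \autoref{thm:dist-fnFn} also gives $(\alpha_{n},v_{n})\in U_{c_{n}}$, that is, $\tfrac{\sqrt{c_{n}}-1}{2}\le v_{n}\le\sqrt{c_{n}}-1$; in particular $1+v_{n}\le\sqrt{c_{n}}=\sqrt{\hat c}$, and the vertex $z^{\ast}=-\tfrac{\sqrt{c_{n}}-1}{2v_{n}}$ of the upward parabola $N_{n}$ lies in $[-1,-\tfrac{1}{2}]\subset[-1,0]$, so $N_{n}$ attains its minimum over $[-1,0]$ at $z^{\ast}$.

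Finally, evaluating $N_{n}$ at its vertex and using $v_{n}\le\sqrt{c_{n}}-1$,
\[
  \alpha_{n}-\frac{(\sqrt{c_{n}}-1)^{2}}{4v_{n}}=N_{n}(z^{\ast})\ge -\sqrt{\hat c}\,C\lambda^{n},
  \qquad\text{so}\qquad
  \alpha_{n}\ge\frac{(\sqrt{c_{n}}-1)^{2}}{4v_{n}}-\sqrt{\hat c}\,C\lambda^{n}\ge\frac{\sqrt{c_{n}}-1}{4}-\sqrt{\hat c}\,C\lambda^{n},
\]
and it remains only to take $n$ large enough that $\sqrt{\hat c}\,C\lambda^{n}<\delta$. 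The step I expect to need the most care is the first one: one must pin down that for $z\in[-1,0]$ the difference $f_{n}(z)-z$ really is the displacement of $Rf_{n}$ on one fundamental domain, so that its non-vanishing is genuinely equivalent to the absence of fixed points of the renormalized map.
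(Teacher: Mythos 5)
Your proof is correct and follows essentially the same route as the paper's: use the absence of periodic points to get \(f_{n}(z)>z\) on \([-1,0]\), transfer this to \(F_{\alpha_{n},v_{n},c_{n}}\) via the \(C^{0}\) bound from \autoref{thm:dist-fnFn}, and evaluate the quadratic numerator of \(F_{n}(z)-z\) at its vertex \(z^{\ast}\in[-1,-\tfrac12]\), using \((\alpha_{n},v_{n})\in U_{c_{n}}\) to conclude. The only difference is bookkeeping: the paper absorbs the error by choosing \(d<\tfrac{2\delta}{\sqrt{c_{n}}+1}\) up front, while you carry \(C\lambda^{n}\) to the end; both are fine.
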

\begin{proof}
  Since \(f\) has an irrational rotation number, we have \(f_{n}(z)>z\) on \([-1, 0]\) for all \(n\).
  We will use \autoref{thm:dist-fnFn} to rewrite this fact in terms of \(F_{\alpha_{n}, v_{n}, c_{n}}\) and deduce~\eqref{eq:an-ge}.
  Namely, take a large \(n\) such that \(d=\dist_{C[-1, 0]}(f_{n}, F_{\alpha_{n}, v_{n}, c_{n}})<\frac{2\delta}{\sqrt{c_{n}}+1}\) and \(\frac{\sqrt{c_{n}}-1}{2}\le v_{n}\le \sqrt{c_{n}}-1\).
  The numerator of \(F_{\alpha_{n}, v_{n}, c_{n}}(z)-z\) takes its minimal value at \(z_{0}=\frac{1-\sqrt{c_{n}}}{2v_{n}}\).
  Note that \(z_{0}\in\left[-1, -\frac{1}{2}\right]\subset[-1, 0]\), hence
  \[
    F_{\alpha_{n}, v_{n}, c_{n}}(z_{0})\ge f_{n}(z_{0})-d>z_{0}-\frac{2\delta}{\sqrt{c_{n}}+1}.
  \]
  Finally, we substitute formulas for \(z_{0}\) and \(F_{\alpha_{n}, v_{n}, c_{n}}\) into this inequality, and we get \(\alpha_{n}>\frac{{(\sqrt{c_{n}}-1)}^{2}}{4v_{n}}-\delta>\frac{\sqrt{c_{n}}-1}{4}-\delta\).
\end{proof}

Next we provide an explicit estimate on the total distortion of \(Rf_{m}\) for large \(m\).
\begin{lemma}%
  \label{lem:DRfm-lt}
  For any map \(f\in B^{2+}_{irr}(p, c)\) and \(D>\hat c^{4}\), for sufficiently large \(m\in\mathbb N\) the total distortion of \(Rf_{m}\) over the circle~\(\sfrac{[-1, \alpha_{m}]}{(-1\sim \alpha_{m})}\) is less than \(D\),
  \begin{equation}
    \label{eq:DRfm-c2}
    D(Rf_{m})=\exp\left[\var\ln Rf'_{m}(z)\right]<D.
  \end{equation}
\end{lemma}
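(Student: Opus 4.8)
The plan is to bound the total distortion $D(Rf_m) = \exp[\operatorname{var}\ln Rf_m'(z)]$ by decomposing the variation of $\ln Rf_m'$ over the circle $\sfrac{[-1,\alpha_m]}{(-1\sim\alpha_m)}$ into three pieces: the variation of $\ln f_m'$ over $[-1,0]$, the variation of $\ln g_m'$ over $[0,\alpha_m]$, and the two jumps at the break points $0$ and $-1\sim\alpha_m$. The product of the two break sizes is $c$ by~\eqref{eq:mul-break}, so the jump contribution to the variation of $\ln Rf_m'$ is exactly $|\ln c|\le \ln\hat c$, independent of $m$. It therefore remains to control the two ``smooth'' variations by a quantity that tends to a bound close to $\ln\hat c^{3}$, so that together with the jumps we stay below any prescribed $D>\hat c^{4}$.

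First I would handle $\operatorname{var}_{[-1,0]}\ln f_m'$. By \autoref{lem:universal_convexity}, for $m$ large we have $Q_1<|f_m''(z)|<Q_2$ on $[-1,0]$ with $f_m''$ of constant sign, so $f_m'$ is monotone on $[-1,0]$; hence $\operatorname{var}_{[-1,0]}\ln f_m' = |\ln f_m'(0)-\ln f_m'(-1)| = |\ln\frac{f_m'(0)}{f_m'(-1)}|$. This ratio can be read off from the limiting Möbius map: by \autoref{thm:dist-fnFn}, $f_m\to F_{\alpha_m,v_m,c_m}$ in $C^2[-1,0]$ with $(\alpha_m,v_m)\in U_{c_m}$, and a direct computation with $F_{\alpha,v,c}'(z)=\frac{\sqrt c + \alpha v}{(1-vz)^2}$ gives $\frac{F'(0)}{F'(-1)}=(1+v)^2$; the range of $(1+v)^2$ over $U_c$ is a compact subinterval of $(0,\infty)$ whose logarithm is bounded by something like $\ln\hat c$ (using $|v|\le \sqrt{\hat c}-1$). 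So $\operatorname{var}_{[-1,0]}\ln f_m'$ is, up to $o(1)$, at most $\ln\hat c$. Next, $g_m$ differs from $f_{m-1}|_{[-\alpha_{m-1}\alpha_m,0]}$ by an affine rescaling~\eqref{eq:gn-fn-1}, and variation of $\ln(\cdot)'$ is invariant under affine pre- and post-composition, so $\operatorname{var}_{[0,\alpha_m]}\ln g_m' = \operatorname{var}_{[-\alpha_{m-1}\alpha_m,0]}\ln f_{m-1}'\le \operatorname{var}_{[-1,0]}\ln f_{m-1}'$, which by the previous step is $\le \ln\hat c + o(1)$.

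Assembling: $\operatorname{var}\ln Rf_m' \le \operatorname{var}_{[-1,0]}\ln f_m' + \operatorname{var}_{[0,\alpha_m]}\ln g_m' + |\ln c| \le 3\ln\hat c + o(1) = \ln\hat c^3 + o(1)$ as $m\to\infty$. Since $D>\hat c^4>\hat c^3$, for all sufficiently large $m$ we get $D(Rf_m)=\exp[\operatorname{var}\ln Rf_m']<\hat c^4<D$, which is~\eqref{eq:DRfm-c2}. (The slack between $\hat c^3$ and $\hat c^4$ absorbs the $o(1)$ as well as any imprecision in the elementary estimate of $\ln(1+v)^2$ over $U_c$.) The main obstacle is getting the constant right: one must verify that the variations of $\ln f_m'$ and $\ln g_m'$ are genuinely controlled by the $C^2$-closeness to $F_{\alpha_m,v_m,c_m}$ — monotonicity of $f_m'$ from \autoref{lem:universal_convexity} is what makes this clean, turning a variation into a single difference of endpoint values — and then that the crude bound coming from $U_{c}$ for the limiting Möbius maps really does land below $\hat c^{4}$ with room to spare. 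I would double-check the edge case where $c_m<1$ (so the relevant convexity has the opposite sign and $v_m$ lies in the other interval of $U_{c_m}$), but the estimate is symmetric under $c\leftrightarrow c^{-1}$ since everything is phrased in terms of $\hat c$.
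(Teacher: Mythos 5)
Your decomposition of \(\var\ln Rf_m'\) into two smooth variations plus two jumps is fine, and your estimate of the smooth parts via \(F_{\alpha,v,c}'(z)=\frac{\sqrt c+\alpha v}{(1-vz)^2}\) and \(\frac{F'(0)}{F'(-1)}=(1+v)^2\le\hat c\) is correct. The gap is in the jump term, which is in fact the whole content of the lemma. From \(b_0 b_{-1}=c\) for the two break sizes \(b_0=\frac{g_m'(0)}{f_m'(0)}\), \(b_{-1}=\frac{f_m'(-1)}{g_m'(\alpha_m)}\) you conclude that the jumps contribute \(|\ln b_0|+|\ln b_{-1}|=|\ln c|\). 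That identity holds only if \(\ln b_0\) and \(\ln b_{-1}\) have the same sign, and they do not: for large \(m\) (say \(c_m=\hat c>1\)) one has \(f_m'(0)>\sqrt{\hat c}>g_m'(0)=f_{m-1}'(0)\), so \(b_0<1\) while \(b_{-1}>1\). Hence \(|\ln b_0|+|\ln b_{-1}|=\ln\frac{b_{-1}}{b_0}\), which is not controlled by \(\ln c\) at all — it contains the term \(2\ln\frac{f_m'(0)}{g_m'(0)}=2\ln\frac{f_m'(0)}{f_{m-1}'(0)}\), a ratio of derivatives of two \emph{consecutive} renormalizations that can be as large as \(\hat c^2\) (since \(F_{\alpha_m,v_m,\hat c}'(0)=\sqrt{\hat c}+\alpha_m v_m\in[\sqrt{\hat c},\hat c]\) while \(F_{\alpha_{m-1},v_{m-1},1/\hat c}'(0)\in[1/\hat c,1/\sqrt{\hat c}]\)). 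Your claimed total of \(\hat c^3\) is therefore not established, and the fact that it undercuts the hypothesis \(D>\hat c^4\) is itself a warning sign: the bound \(\hat c^4\) is what this argument actually produces.

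The paper's proof uses the same monotonicity of \(\ln Rf_m'\) on each branch, but instead of bounding the four pieces separately it telescopes them: the two endpoint values \(\ln f_m'(-1)\) and \(\ln g_m'(\alpha_m)\) cancel between the smooth variations and the jumps, giving the exact identity \(D(Rf_m)=\left(\frac{f_m'(0)}{g_m'(0)}\right)^2=\left(\frac{f_m'(0)}{f_{m-1}'(0)}\right)^2\). The remaining step is precisely the two-level comparison you are missing: \(\frac{F_{\alpha_m,v_m,c_m}'(0)}{F_{\alpha_{m-1},v_{m-1},c_{m-1}}'(0)}\le\hat c^2\) from the definition of \(U_{\hat c}\) and \(U_{1/\hat c}\), whence \(D(Rf_m)\le\hat c^4+o(1)<D\). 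To repair your write-up you would need to (i) drop the claim that the jumps contribute \(|\ln c|\), (ii) bound \(\ln\frac{f_m'(0)}{f_{m-1}'(0)}\) as above, and (iii) use the telescoping rather than adding separate upper bounds for each piece — otherwise the sum of the individual bounds overshoots \(\hat c^4\).
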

\begin{proof}
  For simplicity we will only deal with the case \(c_{m}=\hat c>1\).
  The other case is completely analogous.
  Due to \autoref{lem:universal_convexity}, the map \(Rf_{m}\) is convex both on \([-1, 0]\) and \([0, \alpha_{m}]\), hence \(\ln Rf_{m}'\) is a monotonically increasing function on both intervals.
  Next, for sufficiently large \(m\) we have \(f_{m}'(0)>\sqrt{c}\) and \(g_{m}'(0)=f_{m-1}'(0)<\sqrt{c}\).
  This is true because these inequalities hold for \(F_{\alpha_{m}, v_{m}, c_{m}}\) and \(f_{m}\) is close to it.
  Therefore, \(f_{m}'(0)>g_{m}'(0)\). Moreover, \eqref{eq:mul-break} implies have \(f_{m}'(-1)>g_{m}'(\alpha_{m})\).
  Thus
  \begin{align*}
    D(Rf_{m}) &= \frac{f_{m}'(0)}{f_{m}'(-1)} \times \frac{f_{m}'(0)}{g_{m}'(0)} \times \frac{g_{m}'(\alpha_{m})}{g_{m}'(0)} \times \frac{f_{m}'(-1)}{g_{m}'(\alpha_{m})}\\
              &={\left(\frac{f_{m}'(0)}{g_{m}'(0)}\right)}^{2}={\left(\frac{f_{m}'(0)}{f_{m-1}'(0)}\right)}^{2}.
  \end{align*}
  Due to~\eqref{eq:fn-Fn}, it suffices to prove that \(\frac{F_{\alpha_{m}, v_{m}, c_{m}}'(0)}{F_{\alpha_{m-1}, v_{m-1}, c_{m-1}}'(0)}\le \hat c^{2}\) whenever \((\alpha_{m}, v_{m})\in U_{\hat c}\) and \((\alpha_{m-1}, v_{m-1})\in U_{1/\hat c}\).
  This estimate immediately follows from the definitions of \(U_{c}\) and \(F_{\alpha_{m}, v_{m}, c_{m}}\).
\end{proof}

From now on, we fix \(D=D(c)>\hat c^{4}\).
Then for sufficiently large \(m\) we can combine the inequality \(D(Rf_{m})<\hat c^{4}\) with Denjoy inequalities to obtain the following estimates.
First, for any \(x\in I_{m}\), and \(n\ge m\) we have
\begin{equation}
  \label{eq:Im-deriv}
  D^{-1}(c)<D^{-1}(Rf_{m})\le (f^{q_{n}})'(x)\le D(Rf_{m})<D(c)
\end{equation}
whenever \(f^{q_{n}}\) is differentiable at~\(x\).
Second, for any \(x, y \in I_{m}\), and \(k\) such that the intervals \(Rf_{m}^{j}[x..y]\), \(j=0,\dots,k-1\), are pairwise disjoint we have
\begin{equation}
  \label{eq:Im-frac-deriv}
  D^{-1}(c)<D^{-1}(Rf_{m})\le \frac{(Rf_{m}^{k})'(x)}{(Rf_{m}^{k})'(y)}\le D(Rf_{m})<D(c).
\end{equation}

Let us generalize~\eqref{eq:Im-deriv} from \(x\in I_{m}\) to \(x\in S^{1}\).
\begin{lemma}%
  \label{lem:universal_derivative}
  For any \(f\in B^{2+}_{irr}(p, c)\), for sufficiently large \(n\) we have \(D^{-1}(c)< (f^{q_n})'(x) < D(c)\) whenever \(f^{q_{n}}\) is differentiable at~\(x\).
\end{lemma}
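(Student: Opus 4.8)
The plan is to reduce the statement to~\eqref{eq:Im-deriv}, which already gives a bound on~\(I_m\) depending only on~\(c\), by pushing an arbitrary point \emph{forward} into~\(I_m\) in at most~\(q_m\) steps of~\(f\) and showing that the induced correction factor tends to~\(1\) as~\(n\to\infty\).

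First I would fix~\(m\) large enough that \(D(Rf_m)<\hat c^{4}\) (\autoref{lem:DRfm-lt}) and that~\eqref{eq:Im-deriv} holds, and reduce to points \(x\) outside the countable set \(\bigcup_{i\ge0}f^{-i}(p)\); since \(f^{q_n}\) is \(C^1\) between its break points, continuity of \((f^{q_n})'\) then extends the bound to every point of differentiability, and for such \(x\) all the chain rules below are literally valid. For such an \(x\) (in particular \(x\ne p\)) I would pick an element of \(\zeta_m\) whose closure contains \(x\) and whose combinatorial index is positive: either \(x\in\overline{\Delta^{k}_{m-1}}\) with \(1\le k<q_m\), or \(x\in\overline{\Delta^{j}_{m}}\) with \(1\le j<q_{m-1}\), or else \(x\in\overline{I_m}\). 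In the last case~\eqref{eq:Im-deriv} already yields \(D^{-1}(c)<(f^{q_n})'(x)<D(c)\). In the first case (the second is identical, with \(\Delta^0_m\) and \(q_{m-1}\) replacing \(\Delta^0_{m-1}\) and \(q_m\)) set \(l=q_m-k\ge1\). Because \(f^{l}(\overline{\Delta^{k}_{m-1}})=f^{q_m}(\overline{\Delta^{0}_{m-1}})\subset\overline{I_m}\) — the inclusion holds since in the coordinate \(A_m\) the map \(f^{q_m}|_{\Delta^0_{m-1}}\) is the branch \(f_m\), which maps \([-1,0]\) into \([-1,\alpha_m]\) — the point \(x_0:=f^{l}(x)\) lies in \(\overline{I_m}\), and commuting \(f^{l}\) with \(f^{q_n}\) gives
\[
  (f^{q_n})'(x)=(f^{q_n})'(x_0)\cdot T_n,\qquad
  T_n=\prod_{i=0}^{l-1}\frac{f'(f^{i}x)}{f'\bigl(f^{q_n}(f^{i}x)\bigr)} .
\]
By~\eqref{eq:Im-deriv}, \((f^{q_n})'(x_0)\in\bigl(D^{-1}(Rf_m),D(Rf_m)\bigr)\), so everything comes down to controlling~\(T_n\).

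The key observation is that the orbit points \(f^{i}x\), \(0\le i<l\), lie in \(\overline{\Delta^{k+i}_{m-1}}\) with \(k+i\in\{k,\dots,q_m-1\}\), hence in closures of elements of \(\zeta_m\) of positive index, and \emph{none of these touches the break point \(p\)} — in \(\zeta_m\) only \(\Delta^0_{m-1}\) and \(\Delta^0_m\) are adjacent to \(p\). Thus every \(f^{i}x\) stays at distance at least \(\delta_m>0\) from \(p\), where \(\delta_m\) is the minimum of \(\dist(\overline\Delta,p)\) over the finitely many elements \(\overline\Delta\) of \(\zeta_m\) not containing~\(p\). On the other hand, by Denjoy's theorem (applicable since \(\ln f'\) has bounded variation) \(f\) is topologically conjugate to \(R_\rho\), so \(\|f^{q_n}-\id\|_{C^0(S^1)}=:\eps_n\to0\) (because \(q_n\rho\bmod1\to0\)). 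For \(n\) so large that \(\eps_n<\delta_m\), each pair \(f^{i}x,\ f^{q_n}(f^{i}x)\) lies within \(\eps_n\) of itself and at distance \(\ge\delta_m-\eps_n>0\) from \(p\); since \(f\in C^{2+}\), the function \(\ln f'\) is \(L_f\)-Lipschitz with \(L_f=\sup_{[p,p+1]}|(\ln f')'|<\infty\) on every arc avoiding \(p\), so \(\bigl|\ln f'(f^{i}x)-\ln f'(f^{q_n}(f^{i}x))\bigr|\le L_f\eps_n\) and hence \(|\ln T_n|\le l\,L_f\eps_n\le q_m L_f\eps_n\to0\), uniformly in~\(x\). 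Combining, \((f^{q_n})'(x)\in\bigl(D^{-1}(Rf_m)e^{-q_mL_f\eps_n},\ D(Rf_m)e^{q_mL_f\eps_n}\bigr)\) for every admissible~\(x\), and since \(D(Rf_m)<\hat c^{4}<D(c)\) strictly while \(\eps_n\to0\), this interval is contained in \((D^{-1}(c),D(c))\) for all sufficiently large~\(n\).

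The main point to get right is the choice to push \(x\) \emph{forward}: reducing \(x\) into \(I_m\) by \(f^{-k}\) instead would force the correction orbit through the element \(\Delta^0_{m-1}\) (or \(\Delta^0_m\)) bordering \(p\), so that \(T_n\) would acquire a factor comparable to the break size \(\hat c\) — giving only an \(f\)-independent but too large constant — rather than tending to~\(1\). Pushing forward keeps the correction orbit uniformly away from the break, which is exactly what makes the final bound depend only on~\(c\); the rest of the argument is just the uniform smallness of \(\eps_n\) together with the already universal estimate~\eqref{eq:Im-deriv} on~\(I_m\).
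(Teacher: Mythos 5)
Your proof is correct and follows essentially the same route as the paper: reduce to the universal bound~\eqref{eq:Im-deriv} on \(I_m\) by pushing \(x\) forward into \(I_m\) in at most \(q_m\) iterates and showing that the resulting correction factor tends to \(1\) uniformly as \(n\to\infty\). You simply make explicit what the paper leaves implicit (the correction orbit stays a definite distance \(\delta_m\) from the break, where \(\ln f'\) is uniformly continuous, while \(f^{q_n}\to\id\) uniformly), including the correct observation that pushing forward rather than pulling back is what keeps the orbit away from \(p\).
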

Denjoy inequality says that \(D(f)^{-1}\leq (f^{q_n})'(x) \leq D(f)\).
However, this estimate depends on \(f\), and we need a uniform estimate.
\begin{proof}
  Choose \(m\) as in \autoref{lem:DRfm-lt}.
  Due to~\eqref{eq:Im-deriv}, the desired inequality holds on \(I_{m}\).
  Now, for any \(x \in S^{1}\) we have \(f^j(x) \in I_m\) for some \(0\le j<q_{m}\), and
  \[
    (f^{q_n})'(x)= \frac{\prod_{i=0}^{j-1}f'(x_i)}{\prod_{i=0}^{j-1}f'(x_{i+q_n})}(f^{q_n})'(x_j),
  \]
  where \(x_{i}=f^{i}(x)\). Since the upper bound for \(j\) does not depend on \(n\), the first term is arbitrarily close to one for \(n\) large enough, hence we get the required estimate on \({(f^{q_{n}})}'(x)\) for any \(x\).
\end{proof}

Choose an arbitrary point \(x_{0}\) and consider the interval \(\Delta=[x_{0}..f^{q_{n-1}}(x_{0})]\).
Let \(w\) be the renormalized affine coordinate that plays the same role for the point \(x_{0}\) as the coordinate \(z\) for the break point \(p\):
\(w=0\) corresponds to \(x=x_{0}\) and \(w=-1\) corresponds to \(x=f^{q_{n-1}}(x_{0})\).
Let \(h_{n}\) be the map \(f^{q_{n}}\) written in the chart \(w\).
The map \(h_n\) is analogous to \(f_n\) except that the starting point for the dynamical partition is taken with the base point \(x_{0}\) rather than the break point \(p\).

\begin{lemma}%
  \label{lem:everywhere_convexity}
  There exist universal constants \(\hat Q_2(c)>\hat Q_1(c)>0\) such that for all \(f\in B^{2+}_{irr}(p, c)\) for sufficiently large \(n\) the following holds.
  For all \(x_{0}\in S^{1}\) and \(w\in [-1, 0]\) such that \(h_n''(w)\) is defined, we have \(\hat Q_1(c)\leq |h_n''(w)| \leq \hat Q_2(c)\).
  The second derivative \(h_{n}''(w)\) is positive if \(c_{n}>1\) and is negative if \(c_{n}<1\).
\end{lemma}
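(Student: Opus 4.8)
The plan is to reproduce, with the base point $x_{0}$ in place of the break point $p$, the reasoning that leads from \autoref{thm:dist-fnFn} to \autoref{lem:universal_convexity}. First I would set up the renormalization picture based at $x_{0}$: the interval $\Delta=[x_{0}..f^{q_{n-1}}(x_{0})]$ and the interval $\Delta'=[x_{0}..f^{q_{n}}(x_{0})]$ play, for $x_{0}$, the roles that $\Delta_{n-1}^{0}$ and $\Delta_{n}^{0}$ play for $p$ — their $f$-iterates interleave into a partition of $S^{1}$ of the same combinatorial type as $\zeta_{n}$, since the combinatorics depend only on $\rho$ — so the first return map of $f$ to $\Delta\cup\Delta'$ written in the chart $w$ is a circle homeomorphism $(h_{n},k_{n})$ whose $f^{q_{n-1}}$-branch $k_{n}$ is the analogue of $g_{n}$, and $h_{n}$ is the map in the statement. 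Unlike for $p$, the break point of $f$ need not be a corner of the fundamental interval, so at most one of $h_{n}$, $k_{n}$ carries one extra break, of size $c$, in its interior; nevertheless the product of all breaks of $(h_{n},k_{n})$ — the two ``gluing'' breaks together with this possible interior one — equals $c$, by the argument of~\eqref{eq:mul-break} applied to $h_{n}\circ k_{n}=k_{n}\circ h_{n}=f^{q_{n}+q_{n-1}}$ in the chart $w$.

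Second I would transfer the renormalization convergence to the base point $x_{0}$. Since $\Delta$ and its first $q_{n}$ iterates are pairwise disjoint and $\Delta$ is brought into $I_{m}$ by a bounded number of backward iterations of $f$ (for any fixed large $m$), one can write $h_{n}$, on each of its at most two intervals of smoothness, as a composition of pieces of $f_{m}$ and $g_{m}$ conjugated by transition maps of uniformly bounded distortion, see~\eqref{eq:Im-frac-deriv} and \autoref{lem:universal_convexity}; together with \autoref{thm:dist-fnFn} this shows that $h_{n}$ converges, in $C^{2}$ away from its possible interior break and uniformly in $x_{0}$ and $f$, to a gluing of linear-fractional maps of the form $F_{\alpha,v,c_{n}}$ at a break of size $c_{n}^{\pm1}$. (Alternatively one re-runs the proof of \autoref{thm:dist-fnFn} at the base point $x_{0}$, which uses only the combinatorics and the break relation.) It then remains to see that the linear-fractional pieces are non-degenerate — that their parameter $v$ has the sign of $c_{n}-1$ and is bounded away from $0$ — and, exactly as in the proof of \autoref{lem:an-ge}, this follows from $h_{n}(w)>w$ on $[-1,0]$ (no periodic orbits of $f$) together with the break relation above, which confines the parameters of each piece to a compact subset of $U_{c_{n}}$ disjoint from $\{v=0\}$.

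Granting this model, the lemma follows exactly as \autoref{lem:universal_convexity} follows from \autoref{thm:dist-fnFn}: for $(\alpha,v)$ in a compact subset of $U_{c}$ bounded away from $v=0$, the second derivative $F_{\alpha,v,c}''(z)=\frac{2v(\alpha v+\sqrt{c})}{(1-vz)^{3}}$ has the sign of $c-1$ and absolute value in a fixed interval $[\hat Q_{1}(c),\hat Q_{2}(c)]$ for $z\in[-1,0]$, so $C^{2}$-closeness (after slightly shrinking $\hat Q_{1}$ and enlarging $\hat Q_{2}$) carries these bounds and the sign over to $h_{n}''$ on each interval of smoothness, hence to every $w\in[-1,0]$ at which $h_{n}''(w)$ is defined; since $c_{n}=c^{(-1)^{n}}$ this is the asserted statement. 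The main obstacle is the second step: establishing the $C^{2}$ convergence of $h_{n}$ to the glued linear-fractional model, uniformly in both the base point $x_{0}$ and the map $f$, and the accompanying control keeping the model's parameters in a compact subset of $U_{c_{n}}$ away from $\{v=0\}$. In particular, the possible interior break of $h_{n}$, which is present for a generic base point, must be tracked carefully — yet it is also precisely what prevents $h_{n}$ from being asymptotically affine and is therefore responsible for the lower bound $|h_{n}''|\ge\hat Q_{1}(c)$.
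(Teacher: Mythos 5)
Your overall strategy --- re-establish the renormalization convergence of \autoref{thm:dist-fnFn} at an arbitrary base point \(x_0\) and then read \(h_n''\) off a limiting M\"obius model --- is not the paper's route, and it leaves the hardest step unproven. The paper never claims that \(h_n\) converges to a glued fractional-linear pair. Instead it fixes one large \(m\), writes \(h_n=B^{-1}\circ (Rf_m)^q\circ B\) with \(B\) affine, and computes directly
\[
 h_n''(w)=(Rf_m^q)'(z)\sum_{j=0}^{q-1}\frac{Rf_m''(z_j)}{Rf_m'(z_j)}\,B'(Rf_m^j)'(z).
\]
All summands have the same sign by the uniform convexity of \(Rf_m\) (\autoref{lem:universal_convexity} together with \autoref{lem:an-ge} and the \(U_c\)-confinement of \autoref{thm:dist-fnFn}, all of which concern the renormalization at the break point only); the first-derivative factors are controlled by~\eqref{eq:Im-deriv} and~\eqref{eq:Im-frac-deriv}; and \(B'\sum_j (Rf_m^j)'(z)\) is comparable to \(\sum_j l_j\), the total length of the disjoint iterates of \(\Delta'\), which is pinched between \(\frac{1}{1+D(c)}\) and \(1+\sqrt{\hat c}\) because those iterates together with their \(Rf_m^q\)-images tile \(I_m\). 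This gives both bounds, uniformly in \(x_0\), with no limiting model at \(x_0\) needed.

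The genuine gap in your version is the lower bound. You need the fractional-linear pieces of your model to have nonlinearity parameter \(v\) bounded away from \(0\), and neither of your proposed mechanisms delivers this. The break relation (product of break sizes equals \(c\), as in~\eqref{eq:mul-break}) only forces \(h_n\) to be non-affine globally: a piecewise affine map with a single interior break satisfies the break relation, satisfies \(h_n(w)>w\), and has \(h_n''\equiv 0\) wherever defined. So your closing claim that the interior break is ``responsible for the lower bound'' is incorrect. Likewise, the argument of \autoref{lem:an-ge} that you invoke bounds \(\alpha_n\) from below, not \(v_n\); the confinement of \(v_n\) to \(\left[\frac{\sqrt c-1}{2},\sqrt c-1\right]\) is part of the conclusion of \autoref{thm:dist-fnFn}, proved in the cited references only for renormalizations based at the break point. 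Re-proving that confinement, together with uniform \(C^2\) convergence, for renormalizations at an arbitrary \(x_0\) is a substantial claim --- essentially stronger than the lemma itself --- which you flag as ``the main obstacle'' but do not overcome. The paper's chain-rule computation is exactly the device that transports the convexity information from the break point to an arbitrary base point without any such claim.
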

\begin{proof}
  Recall that \(D(c)\) is a number greater than \(\hat c^{4}\).
  Fix \(m\) such that \(c_{m-1}>1\) and all the estimates from previous lemmas hold for~\(Rf_{m}\).
  In particular, inequalities~\eqref{eq:uniform-convexity},~\eqref{eq:an-ge}, and definition of \(U_{c}\) imply that we have a uniform estimate on the second derivative of \(Rf_{m}\).
  Namely, for every point \(z\in[-1, \alpha_{m}]\) such that \(Rf_{m}''(z)\) is defined we have
  \begin{equation}
    \label{eq:Rfm''}
    -\tilde Q_{2}(c)<Rf_{m}''(z)<-\tilde Q_{1}(c),
  \end{equation}
  where \(\tilde Q_{1}(c)=Q_{1}(c)\min\left(1, \frac{\sqrt{\hat c}-1}{4}\right)\), \(\tilde Q_{2}(c)=Q_{2}(c)\sqrt{\hat c}\).

  First consider the case \(\Delta\subset I_m=\Delta_{m-1}^0\cup \Delta_m^0\).
  Note that \(\Delta\) can be viewed as an element of the dynamical partition for \(Rf_m\) with base point \(z(x_{0})\).
  Let \(B\) be the affine change of variables from \(w\) to \(z\), where \(z\) is the renormalized coordinate on~\(I_m\).
  Then \(h_n= B^{-1}\circ{(Rf_m)}^q\circ B\), where \(q\) is the denominator of the continued fraction approximation for the rotation number of \(Rf_m\) corresponding to \(f^{q_n}\). Denote \(z=B(w)\), \(z_j={(Rf_m)}^{j}z\), \(j\geq 0\). Then
  \[
    h_{n}''(w)=(Rf_m^{q})'(z)\sum_{j=0}^{q-1}\left[\frac{Rf''_m(z_j)}{Rf'_m(z_j)}B'(Rf_m^{j})'(z)\right].
  \]
  The inequalities~\eqref{eq:Im-deriv} and~\eqref{eq:Rfm''} provide us with estimates on \((Rf_{m}^{q})'(z)=\left(f^{q_{n}}\right)'(A_{m}(z))\), \(Rf_{m}''(z_{j})\), and \(Rf_{m}'(z_{j})=(f^{q_{m}})'(A_{m}(z_{j}))\).
  Hence, it suffices to estimate \(B'\sum_{j=0}^{q-1}(Rf_{m}^{j})'(z)\).

  Let \(\Delta'=[B(-1)..B(0)]\) be the interval \(\Delta\) written in the chart~\(z\).
  Let \(l_j\), \(j\geq 0\) be the length of the interval \(Rf_{m}^{j}(\Delta')\), 
  Then \(B'=l_0\) and \((Rf_m^{j})'(z)l_0/l_j\) is the ratio of the derivatives of \(Rf_{m}^{j}\) at two points on \(\Delta'\), hence due to~\eqref{eq:Im-frac-deriv} we have
  \[
    D^{-1}(c)\frac{l_j}{l_0}< (Rf_m^{j})'(z) <D(c)\frac{l_j}{l_0}
  \]
  for all \(j=0,\dotsc,q-1\).

  The pairwise disjoint intervals \(Rf_{m}^{j}(\Delta')\), \(j=0,\dotsc,q-1\), are included by \(I_{m}=[-1, \alpha_{m}]\) while their union with their images under \(Rf_{m}^{q}\) covers this interval.
  Since \(\left(Rf_{m}^{q}\right)'\in({D(c)}^{-1}, D(c))\), we have
  \[
    \frac{1}{1+D(c)}<\frac{1+\alpha_{m}}{1+D(c)}\le \sum_{j=0}^{q-1}l_{j}\le 1+\alpha_{m}\le 1+\sqrt{\hat c},
  \]
  where the outer inequalities follow from \(0<\alpha_{m}\le \sqrt{\hat c}\).
  Thus
  \[
    \frac{\tilde Q_1(c)}{D(c)D^{2}(Rf_{m})(1+D(c))} <|h_{n}''(w)|<\tilde Q_{2}(c)D(c)D^{2}(Rf_{m})(1+\sqrt{\hat c}).
  \]

  Finally, if \(\Delta\) does not belong to \(I_m\), we will choose a number \(s\) such that  \(f^s\Delta\) belongs to \(I_m\) (an upper estimate on \(s\) depends only on \(m\)), and use the above estimate for the map \(f^{q_{n}}\) on \(f^{s} \Delta\). Since for large \(n\) the map that takes a relative coordinate \(w\in [-1,0]\) corresponding to \(\Delta\) into a relative coordinate \(w_s
  \in [-1,0]\) corresponding to \(f^s\Delta\) is close to the identity, we get a required estimate with
  \(\hat Q_1(c)= \frac{\tilde Q_{1}(c)}{D^{3}(c)(1+D(c))}\), \(\hat Q_2(c) = \tilde Q_2(c)D^3(c)(1+\sqrt{\hat c})\) for a sufficiently large \(n\).
\end{proof}

\subsection{Lengths of the intervals \(\Delta_{n}^{k}\)}
In this subsection we use uniform estimates on the derivatives of \(f^{k}\) established above to estimate the lengths of \(\Delta_{n}^{k}\) and prove \autoref{thm:zeta-decay}.
First we use \autoref{lem:C2-hom} to show that all intervals are exponentially small, then we use convexity to show that some intervals of \(\zeta_{n}\) are exponentially small compared to other intervals of the same partition.

\begin{lemma}%
  \label{lem:decay-zeta}
  For \(f \in B^{2+}_{irr}(p, c)\), and for sufficiently large \(n\), the following holds.
  Consider two intervals \(\Delta=\Delta_{n-1}^{k}\), \(0\le k<q_{n}\), and \(\Delta'\in\zeta_{n+2}\), \(\Delta'\subset\Delta\).
  Then
  \[
    \frac{|\Delta'|}{|\Delta|}\le \Lambda(c):=\frac{1}{1+D^{-1}(c)}.
  \]
  Moreover, if \(a_{n+1}>1\), then the same estimate holds for any \(\Delta'\in\zeta_{n+1}\), \(\Delta'\subset \Delta\).
\end{lemma}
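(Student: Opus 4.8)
The plan is to reduce the statement to the following claim: for every interval \(\Delta'\) of \(\zeta_{n+2}\) (resp.\ of \(\zeta_{n+1}\)) contained in \(\Delta:=\Delta_{n-1}^{k}\) one can find a disjoint interval \(\Delta''\subset\Delta\), which is a union of intervals of the same partition, with \(|\Delta'|\le D(c)\,|\Delta''|\). Indeed, then \(|\Delta|\ge|\Delta'|+|\Delta''|\ge\bigl(1+D^{-1}(c)\bigr)|\Delta'|\), which is exactly \(|\Delta'|/|\Delta|\le\Lambda(c)\).

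First I would recall the combinatorial structure of the partitions. With \(q_{n+1}=a_{n+1}q_{n}+q_{n-1}\), the classical identity \(\Delta_{n-1}^{0}=\Delta_{n+1}^{0}\sqcup\bigsqcup_{j=0}^{a_{n+1}-1}\Delta_{n}^{q_{n-1}+jq_{n}}\) gives, after applying \(f^{k}\), the \(\zeta_{n+1}\)-refinement of \(\Delta=\Delta_{n-1}^{k}\): it consists of \(\Delta_{n+1}^{k}\) together with the \(f^{q_{n}}\)-chain \(\Delta_{n}^{q_{n-1}+k},\dots,\Delta_{n}^{q_{n+1}-q_{n}+k}\), and \(\Delta_{n+1}^{k}\) is adjacent to the last link \(\Delta_{n}^{q_{n+1}-q_{n}+k}\) of the chain. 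Applying the same identity with \(n\) replaced by \(n+1\) to each \(\Delta_{n}^{q_{n-1}+jq_{n}+k}\) yields the \(\zeta_{n+2}\)-refinement: inside each such block the atoms of \(\zeta_{n+2}\) form an \(f^{q_{n+1}}\)-chain of \(\Delta_{n+1}\)-type intervals followed by one \(\Delta_{n+2}\)-type interval adjacent to the last link of that chain, while \(\Delta_{n+1}^{k}\) is itself an atom of \(\zeta_{n+2}\). Two length estimates then do the work. (a) By \autoref{lem:universal_derivative}, \((f^{q_{m}})'\in(D^{-1}(c),D(c))\) everywhere for \(m\in\{n,n+1,n+2\}\) (all large), so \(|f^{\pm q_{m}}(J)|/|J|\in(D^{-1}(c),D(c))\) for any interval \(J\); hence any atom \(\Delta_{\ell}^{a}\) that is not the first link of its \(f^{q_{\ell}}\)- or \(f^{q_{\ell+1}}\)-chain has the corresponding \(f^{-q_{\ell}}\)- or \(f^{-q_{\ell+1}}\)-image as a companion. (b) For an interval of the form \([y..f^{q_{m-1}}y]\) and its one-step refinement, the \(\Delta_{m+1}\)-type subinterval \([y..f^{q_{m+1}}y]\) is contained in the \(f^{q_{m}}\)-image of the last \(\Delta_{m}\)-type subinterval: in the renormalized affine chart based at \(y\), writing \(h=f^{q_{m}}\), this subinterval is \([h^{a_{m+1}}(-1),0]\), which is contained in \([h^{a_{m+1}}(-1),h^{a_{m+1}+1}(-1)]=h\bigl([h^{a_{m+1}-1}(-1),h^{a_{m+1}}(-1)]\bigr)\) because \(h^{a_{m+1}+1}(-1)\ge0\), this last inequality being the elementary combinatorial fact that \(f^{q_{m+1}+q_{m}}(y)\) lies on the same side of \(y\) as \(f^{q_{m}}(y)\). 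Combined with (a), this gives \(|\Delta_{m+1}\text{-type}|\le D(c)\,|\text{last }\Delta_{m}\text{-type}|\).

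To assemble this, for every atom \(\Delta'\in\zeta_{n+2}\) with \(\Delta'\subset\Delta\) I exhibit a companion \(\Delta''\subset\Delta\). If \(\Delta'\) lies in a block \(\Delta_{n}^{q_{n-1}+jq_{n}+k}\) with \(j\ge1\), its \(f^{-q_{n}}\)-image is the corresponding atom of the preceding block, which works by (a). For the block \(j=0\): every non-first link of its \(\Delta_{n+1}\)-chain uses its \(f^{-q_{n+1}}\)-image; the first link uses \(f^{-q_{n+1}}\), whose image is \(\Delta_{n+1}^{k}\subset\Delta\); the \(\Delta_{n+2}\)-type atom uses the last \(\Delta_{n+1}\)-link of that block via (b); and the remaining atom \(\Delta_{n+1}^{k}\) uses \(\Delta_{n}^{q_{n+1}-q_{n}+k}\) via (b). This yields \(|\Delta'|\le\Lambda(c)|\Delta|\) for all \(\Delta'\in\zeta_{n+2}\). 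If \(a_{n+1}>1\), the \(f^{q_{n}}\)-chain of \(\Delta_{n}\)-intervals has at least two links, so every atom of the \(\zeta_{n+1}\)-refinement of \(\Delta\) also has a companion inside \(\Delta\): an \(f^{\pm q_{n}}\)-neighbour for the \(\Delta_{n}\)-atoms, and \(\Delta_{n}^{q_{n+1}-q_{n}+k}\) for \(\Delta_{n+1}^{k}\) by (b). This gives the ``moreover'' part; for \(a_{n+1}=1\) this last step fails, since the unique \(\Delta_{n}\)-interval may fill almost all of \(\Delta\).

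I expect the main difficulty to be estimate (b), together with the bookkeeping in step (a) of which translates \(f^{\pm q_{m}}(\cdot)\) of a given atom still lie inside \(\Delta_{n-1}^{k}\). Controlling the \(\Delta_{m+1}\)-type intervals by a constant that depends only on \(c\), and not on \(f\) or \(\rho\), is precisely what forces the use of the renormalized charts and of the \(c\)-uniform Denjoy bound \(D(c)\) from \autoref{lem:universal_derivative} in place of the \(f\)-dependent one.
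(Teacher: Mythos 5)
Your proof is correct and rests on the same core mechanism as the paper's: produce a companion interval \(\Delta''\subset\Delta\) with interior disjoint from \(\Delta'\) and \(|\Delta''|\ge D^{-1}(c)|\Delta'|\) via the \(c\)-uniform Denjoy bound of \autoref{lem:universal_derivative}, then conclude \(|\Delta|\ge(1+D^{-1}(c))|\Delta'|\). The paper's proof is however much shorter: it observes that for \emph{every} atom \(\Delta'=\Delta_m^l\) (with \(m\) the type of the atom, \(m\in\{n,n+1,n+2\}\)) one of the two adjacent translates \(f^{\pm q_m}(\Delta')\) already lies inside \(\Delta\), and this covers the \(\Delta_{m+1}\)-type atoms as well — e.g.\ \(f^{q_{n+2}}(\Delta_{n+2}^l)\subset\Delta_n^l\) because \(2\mu_{n+2}<\mu_n\), and \(f^{q_{n+1}}(\Delta_{n+1}^k)\subset\Delta\) because \(2\mu_{n+1}<\mu_{n-1}\) — so your separate estimate (b), while correct, is not needed. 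One small slip in your bookkeeping: \(f^{-q_{n+1}}\) of the first \(\Delta_{n+1}\)-link of the block \(\Delta_n^{q_{n-1}+k}\) equals \(\Delta_{n+1}^{k-(a_{n+1}-1)q_n}\), which is \(\Delta_{n+1}^k\) only when \(a_{n+1}=1\); for \(a_{n+1}\ge 2\) it lands in the next block \(\Delta_n^{q_{n-1}+q_n+k}\). Since that image is contained in \(\Delta\) in either case, this does not affect the validity of your argument.
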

\begin{proof}
  Let \(\Delta=\Delta_{n-1}^{k}\) and \(\Delta'=\Delta_{m}^{l}\subset\Delta\), \(m\in\set{n, n+1, n+2}\) be two intervals satisfying assumptions of the lemma.
  It is easy to see that \(\Delta\) includes one of the intervals \(f^{\pm q_{m}}\left(\Delta'\right)\).
  Due to \autoref{lem:universal_derivative}, for sufficiently large \(n\) we have \(\left|f^{\pm q_{m}}\left(\Delta'\right)\right|\ge D^{-1}(c)|\Delta'|\), hence
  \[
    \frac{|\Delta'|}{|\Delta|}\le \frac{|\Delta'|}{|\Delta'|+\left|f^{\pm q_{m}}\left(\Delta'\right)\right|}\le \frac{1}{1+D^{-1}(c)}=\Lambda(c).
  \]
\end{proof}
Iterating the estimate from \autoref{lem:decay-zeta}, we immediately get the first part of \autoref{thm:zeta-decay}.
\begin{corollary}%
  \label{cor:exp-decay-max}
  For any \(f \in B^{2+}_{irr}(p, c)\) we have
  \[
    \max_{0\le k<q_{n}}\left|\Delta_{n-1}^{k}\right|=O\left({\Lambda(c)}^{n/2}\right)
  \]
  as \(n\to\infty\).
\end{corollary}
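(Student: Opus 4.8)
The plan is to iterate \autoref{lem:decay-zeta} up the tower of dynamical partitions. Fix $N$ large enough that \autoref{lem:decay-zeta} holds for all indices $\ge N$, and let $n>N$, $0\le k<q_n$. Since $\zeta_{m+1}$ refines $\zeta_m$, the interval $\Delta_{n-1}^k$ determines a nested chain $\Delta_{n-1}^k=J_n\subset J_{n-1}\subset\dots\subset J_N$, where $J_m$ is the element of $\zeta_m$ containing $\Delta_{n-1}^k$. Recall that each element of $\zeta_m$ is either one of the $q_m$ ``long'' intervals $\Delta_{m-1}^\ell$ or one of the $q_{m-1}$ ``short'' intervals $\Delta_m^\ell$, and that a short interval of $\zeta_m$ equals a long interval of $\zeta_{m+1}$. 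Consequently, if $J_m$ is a short interval of $\zeta_m$ then $J_{m+1}=J_m$, and no two consecutive indices in $[N,n]$ can both fail to be \emph{long levels} (indices $m$ with $J_m=\Delta_{m-1}^\ell$); note also that $n$ is always a long level.

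I would then extract two consequences of \autoref{lem:decay-zeta}. First, if $m\le n-2$ is a long level, then \autoref{lem:decay-zeta} applied with index $m$ to $J_{m+2}\in\zeta_{m+2}$, $J_{m+2}\subset J_m=\Delta_{m-1}^\ell$, gives $|J_{m+2}|\le\Lambda(c)\,|J_m|$. Second, if $m\le n$ is not a long level, then $J_m$ is the short interval $\Delta_m^\ell$, i.e.\ the unique generation-$m$ interval into which $\zeta_m$ subdivides the long interval $J_{m-1}=\Delta_{m-2}^{\ell'}$ of $\zeta_{m-1}$ (here $m-1$ is automatically a long level); since $J_m$ is a long interval of $\zeta_{m+1}=\zeta_{(m-1)+2}$ contained in $\Delta_{(m-1)-1}^{\ell'}$, \autoref{lem:decay-zeta} applied with index $m-1$ gives $|J_m|\le\Lambda(c)\,|J_{m-1}|$, and moreover $J_{m+1}=J_m$. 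Finally, lengths never increase along the chain, $|J_{m+1}|\le|J_m|$.

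It remains to combine these gains. Inside a maximal run of consecutive long levels I would apply the first consequence to every second index, losing a factor $\Lambda(c)$ per two levels; at each (isolated) non-long level $m$ the second consequence contributes a factor $\Lambda(c)$ between $J_{m-1}$ and $J_m=J_{m+1}$, again a factor $\Lambda(c)$ over the two levels $m-1,m$ while the transition $m\mapsto m+1$ costs nothing. Assembling the blocks yields $|J_n|\le C\,\Lambda(c)^{\lfloor (n-N)/2\rfloor}\,|J_N|$ with $C$ independent of $n$ and $k$. Since $\Lambda(c)^{1/2}\in(0,1)$, $|J_N|\le 1$, and the estimate is uniform in $k$, taking the maximum over $k$ gives $\max_{0\le k<q_n}|\Delta_{n-1}^k|=O\!\left(\Lambda(c)^{n/2}\right)$.

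I expect the last bookkeeping step to be the main obstacle: one must check that the factors really accumulate at the rate $\Lambda(c)^{1/2}$ per level, and not more slowly, in every configuration of long and non-long levels (and near the ends of the chain, where there are only $O(1)$ corrections). The awkward case is a long stretch of partial quotients $a_m=1$ together with a $k$ for which the chain always follows the ``long'' branch $\Delta_{n-1}^k\subset\Delta_m^{\ell}$ in the splitting $\Delta_{m-1}^k=\Delta_m^{\ell}\cup\Delta_{m+1}^k$: there the single-step transitions $J_m\mapsto J_{m+1}$ carry no definite factor (the ``moreover'' part of \autoref{lem:decay-zeta} requires $a_{m+1}>1$), so one is forced to pair each long level with the level two steps later, and it must be verified that this pairing still covers enough of $[N,n]$.
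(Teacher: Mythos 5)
Your overall strategy is exactly the paper's: the paper's entire proof of this corollary is the sentence ``iterating the estimate from \autoref{lem:decay-zeta}\dots'', and your combinatorial preliminaries are all correct (no two consecutive non-long levels, $n$ is a long level, the two-step gain at a long level, the one-step gain at a non-long level $m$ obtained from the lemma at index $m-1$). The gap is in the final assembly, and you have flagged the wrong ``awkward case''. A maximal all-long chain (your worry) is the benign situation: pairing $m$ with $m+2$ all the way down gives exactly one factor of $\Lambda(c)$ per two levels. What actually breaks the count $\lfloor (n-N)/2\rfloor$ is the repeating pattern L\,L\,N, realizable when $a_{m+1}=1$ (e.g.\ for the golden mean, where every $a_i=1$): take $J_m=\Delta_{m-1}^{j}$ long, $J_{m+1}=\Delta_m^{j+q_{m-1}}$ its unique long child, and $J_{m+2}=J_{m+3}$ the short child of $J_{m+1}$. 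Over this period the only available inequalities are $|J_{m+2}|\le\Lambda(c)\,|J_m|$ (your first consequence at $m$), $|J_{m+3}|\le\Lambda(c)\,|J_{m+1}|$ (your first consequence at $m+1$, which is literally the same inequality as your second consequence at $m+2$ since $J_{m+3}=J_{m+2}$), and the monotonicity $|J_{m+1}|\le|J_m|$; there is no single-step gain from $m$ to $m+1$ because $a_{m+1}=1$. So at most one factor of $\Lambda(c)$ can be extracted per three levels, your two gain-segments $[m,m+2]$ and $[m+1,m+2]$ overlap and cannot both be counted, and a chain following L\,L\,N periodically yields only $O\left(\Lambda(c)^{n/3}\right)$.

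The clean statement the iteration does prove is $\max_{\Delta\in\zeta_{n+3}}|\Delta|\le\Lambda(c)\max_{\Delta\in\zeta_{n}}|\Delta|$: every $\Delta'\in\zeta_{n+3}$ lies in some $\Delta\in\zeta_{n+1}$; if $\Delta$ is long in $\zeta_{n+1}$ apply the lemma at index $n+1$; if $\Delta=\Delta_{n+1}^{\ell}$ is short, then $\Delta\in\zeta_{n+2}$ sits inside the long element $\Delta_{n-1}^{\ell}$ of $\zeta_n$ and the lemma at index $n$ applies to $\Delta$ itself. This gives the corollary with exponent $n/3$ instead of $n/2$, which is all that is ever used: \autoref{thm:zeta-decay} only asserts the existence of some $\gamma_1(c)\in(0,1)$, the choice of $m(c)$ in \autoref{several_small_a_n} merely has to be adjusted to $\Lambda(c)^{\lfloor m(c)/3\rfloor}<\alpha(c)/2$, and the proof of the main theorem uses the corollary only to conclude that $\max_{k}|f^{k}(J_n)|\to0$. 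So either prove the statement with $n/3$ in place of $n/2$ (which is what \autoref{lem:decay-zeta} honestly yields), or supply an additional argument at the L\,L\,N junctions if the exponent $n/2$ is insisted upon; as written, your assembly step does not close.
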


In the next few lemmas we will prove the second part of \autoref{thm:zeta-decay}.
In order to prove this lemma, we will show that for some constants \(m(c)\) and \(\hat\gamma_{2}(c)<1\) for sufficiently large \(n\), each interval \(\Delta\) of the partition \(\zeta_{n}\) includes a subinterval \(\tilde\Delta\) of the partition \(\zeta_{n+m(c)}\) such that \(\frac{|\tilde\Delta|}{\mu(\tilde{\Delta})}\le\hat\gamma_{2}(c)\frac{|\Delta|}{\mu(\Delta)}\).
This will immediately imply~\eqref{eq:zeta-min-delay} with \(\gamma_{2}(c)=\sqrt[m(c)]{\hat{\gamma}_{2}(c)}\).

In \autoref{large_subinterval} we show that each interval \(\Delta_{n-1}^{k}\) contains a \enquote{long} subinterval \(\Delta_{n}^{l}\), \(|\Delta_{n}^{l}|\ge r(c)|\Delta_{n-1}^{k}|\).
In \autoref{cor:large_a_n} we will use this lemma to construct a \enquote{short} subinterval \(\Delta_{n}^{l}\subset\Delta_{n-1}^{k}\) in the case \(a_{n+1}\ge A(c)\) for some constant \(A(c)\) that will be chosen later.
Finally, in \autoref{several_small_a_n} we will choose \(m(c)\) and find a “short” subinterval \(\tilde\Delta\subset\Delta_{n-1}^{k}\) in the case \(a_{n+j}<A(c)\), \(j=1,\dotsc,m(c)\).
\begin{lemma}%
  \label{large_subinterval}
  There exists a universal constant \(0<r(c)<1\) such that for any \(f\in B^{2+}_{irr}(p, c)\) the following holds.
  For any \(n\) large enough any interval \(\Delta^k_{n-1}\), \(0\leq k <q_n\) contains a \enquote{long} subinterval \(\Delta_n^l\), \(0\leq l <q_{n+1}\) satisfying the following estimate:
  \begin{equation}
    \label{eq:len-ge-rc}
    \frac{|\Delta_n^l|}{|\Delta^k_{n-1}|} \geq r(c).
  \end{equation}
\end{lemma}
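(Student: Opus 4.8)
The idea is to pass to a renormalized chart centered at an endpoint of $\Delta_{n-1}^{k}$ and to use that the renormalized map $h_{n}$ is uniformly far from affine, which is exactly what \autoref{lem:everywhere_convexity} provides. Concretely, set $x_{0}=f^{k}(p)$, so $\Delta_{n-1}^{k}=[x_{0}..f^{q_{n-1}}(x_{0})]$, let $w$ be the affine chart with $w(x_{0})=0$, $w(f^{q_{n-1}}(x_{0}))=-1$, and let $h_{n}$ be $f^{q_{n}}$ written in the chart $w$. Since the chart is affine, ratios of lengths of subintervals of $\Delta_{n-1}^{k}$ equal ratios of the corresponding $w$-lengths. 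By the standard combinatorics of dynamical partitions, $\Delta_{n-1}^{k}$ is cut by the points $f^{k+q_{n-1}+jq_{n}}(p)$, $j=0,\dots,a_{n+1}$, together with $x_{0}$, into the $a_{n+1}$ intervals $\Delta_{n}^{l_{j}}$, $l_{j}=k+q_{n-1}+jq_{n}\in[0,q_{n+1})$, plus the interval $\Delta_{n+1}^{k}$ adjacent to $x_{0}$. Writing $w_{j}=w(f^{k+q_{n-1}+jq_{n}}(p))$ and $\phi=h_{n}-\id$, we have $w_{0}=-1$, $w_{j+1}=h_{n}(w_{j})$, the sequence $(w_{j})$ is increasing (since $f$ has irrational rotation number, $h_{n}(w)>w$ on $[-1,0]$, as for $f_{n}$ in the proof of \autoref{lem:an-ge}), $w_{a_{n+1}}\in(-1,0)$, and $|\Delta_{n}^{l_{j}}|/|\Delta_{n-1}^{k}|=w_{j+1}-w_{j}=\phi(w_{j})$. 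So it suffices to find a universal $r(c)$ with $\max_{0\le j<a_{n+1}}\phi(w_{j})\ge r(c)$.

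Four uniform ingredients enter, all valid for $n$ large. First, \autoref{lem:decay-zeta} applied to $\Delta_{n+1}^{k}\in\zeta_{n+2}$, $\Delta_{n+1}^{k}\subset\Delta_{n-1}^{k}$, gives $-w_{a_{n+1}}=|\Delta_{n+1}^{k}|/|\Delta_{n-1}^{k}|\le\Lambda(c)$; hence the interval $[-1,w_{a_{n+1}-1}]$ has length at least $\kappa(c):=1-\Lambda(c)>0$ minus an error bounded below. Second, by \autoref{lem:universal_derivative}, $h_{n}'$ equals $(f^{q_{n}})'$ composed with the affine chart, hence lies in $(D^{-1}(c),D(c))$ wherever defined, so $|\phi'|<D(c)$. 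Third, by \autoref{lem:everywhere_convexity}, $\hat Q_{1}(c)\le|\phi''(w)|=|h_{n}''(w)|\le\hat Q_{2}(c)$ wherever defined, with a fixed sign. Fourth, $h_{n}$ has at most two break points in $(-1,0)$: a break corresponds to $p\in\interior\Delta_{n-1}^{k+i}$ for some $0\le i<q_{n}$, and $\{\Delta_{n-1}^{j}:k\le j<k+q_{n}\}$ is the union of the two subfamilies $\{\Delta_{n-1}^{j}:k\le j<q_{n}\}\subset\zeta_{n}$ and its image under the homeomorphism $f^{q_{n}}$, each with pairwise disjoint interiors, so $p$ lies in the interior of at most two of them.

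Now argue by contradiction. Suppose $\phi(w_{j})<\eps$ for all $0\le j<a_{n+1}$, with $\eps<r(c)$ to be fixed. Then the gaps $w_{j+1}-w_{j}$ are $<\eps$, so $\{w_{j}:0\le j\le a_{n+1}-1\}$ is $\eps$-dense in $[-1,w_{a_{n+1}-1}]$. The at most two breaks of $h_{n}$ split $[-1,w_{a_{n+1}-1}]$ into at most three pieces; pick the longest, $[\sigma,\tau]$, which has length at least a third of $\kappa(c)-\eps$ and no break in its interior, so $\phi\in C^{2}[\sigma,\tau]$. Take the smallest and largest $w_{j}$ in $[\sigma,\tau]$ as $w_{a}<w_{c}$ and one nearest the midpoint as $w_{b}$; for $\eps$ below a suitable absolute multiple of $\kappa(c)$ these satisfy $w_{b}-w_{a},\,w_{c}-w_{b}\ge c_{0}\kappa(c)$ for an absolute $c_{0}>0$, while $\phi(w_{a}),\phi(w_{b}),\phi(w_{c})<\eps$ (here $w_{c}\le w_{a_{n+1}-1}$ is used). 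The second divided difference then satisfies $\tfrac12\hat Q_{1}(c)\le|\phi[w_{a},w_{b},w_{c}]|=\tfrac12|\phi''(\xi)|$ on one hand and $|\phi[w_{a},w_{b},w_{c}]|\le 2\eps/(c_{0}\kappa(c))^{2}$ on the other, forcing $\eps\ge\tfrac14\hat Q_{1}(c)\,c_{0}^{2}\kappa(c)^{2}$. Choosing $r(c):=\min(c_{1}\kappa(c),\,\tfrac14\hat Q_{1}(c)c_{0}^{2}\kappa(c)^{2})\in(0,1)$ (with $c_{1}$ the absolute constant from the smallness requirement on $\eps$) contradicts $\eps<r(c)$; hence some $\phi(w_{j})\ge r(c)$, i.e.\ $\Delta_{n}^{l_{j}}$ is the desired long subinterval.

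\textbf{Main obstacle.} The conceptual content is already supplied by \autoref{lem:everywhere_convexity}: uniform non-affineness ($\hat Q_{1}(c)>0$) of the renormalized map is what prevents the $a_{n+1}$ subintervals from being comparably small. The only genuinely delicate point in the write-up is the bookkeeping in the last paragraph --- placing the three comparison points inside a single $C^{2}$ window of $h_{n}$ while keeping them separated by a definite fraction of $\kappa(c)$ --- which relies on the uniform bound on the number of break points of $h_{n}$ together with the lower bound on $w_{a_{n+1}}+1$ from \autoref{lem:decay-zeta}.
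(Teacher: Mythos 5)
Your proof is correct, and it rests on the same engine as the paper's: pass to the renormalized affine chart on \(\Delta_{n-1}^{k}\) and invoke the uniform two-sided bound on \(|h_n''|\) from \autoref{lem:everywhere_convexity} to conclude that \(h_n\) is quantitatively far from affine. Where you diverge is in converting that non-affineness into a long partition interval. The paper first extracts a single point \(w_0\) with \(h_n(w_0)-w_0\ge d(c)\) (a convex, nonnegative displacement with one break on an interval of unit length cannot be uniformly small), and then transfers this back to an actual interval \(\Delta_n^l\) by a two-case analysis: either \(w_0\) lies in some \(\Delta_n^l\), or it lies in \(\Delta_{n+1}^k\), in which case one compares \([x_0..f^{q_n}(x_0)]\) with \(f^{-q_n}(\Delta_{n+1}^k)\) and \(f^{q_{n-1}}(\Delta_n^k)\) using the uniform derivative bounds of \autoref{lem:universal_derivative}; this costs a factor \(2D(c)\) but needs no information about \(|\Delta_{n+1}^k|\). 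You instead parametrize the subintervals \(\Delta_n^{l_j}\) directly by the orbit \(w_j=h_n^j(-1)\) and run a three-point second-divided-difference argument on a break-free window, which pins down the desired long gap without any case analysis — but at the price of needing \autoref{lem:decay-zeta} to guarantee that \(1+w_{a_{n+1}}\ge 1-\Lambda(c)\), i.e., that the orbit points actually span a definite fraction of \([-1,0]\) (this input is available non-circularly, as \autoref{lem:decay-zeta} precedes the statement and uses only \autoref{lem:universal_derivative}). Both routes yield an explicit \(r(c)\); yours is slightly more self-contained in its endgame, the paper's is slightly lighter on quantitative convexity. The only points I would ask you to make explicit in a full write-up are (i) that under the contradiction hypothesis \(\sum_j\phi(w_j)\ge 1-\Lambda(c)\) forces \(a_{n+1}\) to be large, so three suitably separated points \(w_a<w_b<w_c\) do exist in the chosen window, and (ii) the mean-value form \(\phi[w_a,w_b,w_c]=\tfrac12\phi''(\xi)\), which is legitimate since \(f\in C^{2+\eps}\) away from the break.
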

\begin{proof}
  Choose \(N(f)\) so large that conclusions of \autoref{lem:universal_derivative} and \autoref{lem:everywhere_convexity} hold for \(n\ge N(f)\), then choose \(n>N(f)\).
  As in \autoref{lem:everywhere_convexity}, let \(w\) be the renormalized affine coordinate corresponding to the interval \(\Delta=\Delta^k_{n-1}\); let \(h\) be the map \(f^{q_{n}}\) written in this chart.
  Since the map \(h\) is uniformly convex (up or down) with at most one break point inside \([-1, 0]\), it cannot be uniformly close to identity.
  In other words, there exists a universal constant \(d(c)>0\) such that \(h_n(w_{0})-w_{0}\geq d(c)\) for some \(w_{0}\in [-1,0]\).
  We claim that~\eqref{eq:len-ge-rc} holds for \(r(c)=\frac{d(c)}{2D(c)}\).

  Let \(x_{0}\in \Delta\) be the point such that \(w(x_{0})=w_{0}\).
  Put \(I=[x_{0}..f^{q_{n}}(x_{0})]\), then \(|I|\ge d(c)|\Delta|\).
  The interval \(I\) has the same invariant measure as each of \(\Delta_{n}^{l}\).
  Consider two cases.

  If \(x_{0}\) belongs to one of the subintervals \(J=\Delta_{n}^{l}\subset\Delta\), then \(I\subset J\cup f^{q_{n}}(J)\).
  Since \(D(c)|J|\ge |f^{q_{n}}(J)|\), we have
  \[
    |J|\ge\frac{|I|}{1+D(c)}\ge\frac{d(c)}{1+D(c)}|\Delta|>\frac{d(c)}{2D(c)}|\Delta|=r(c)|\Delta|.
  \]

  If \(x_{0}\in \Delta_{n+1}^{k}\), then \(I\) is covered by the union of \(\Delta_{n+1}^{k}\) and \(\Delta_{n}^{k}\), thus one of these two intervals has length at least \(\frac{|I|}{2}\).
  Note that \(f^{-q_{n}}(\Delta_{n+1}^{k})\subset\Delta_{n}^{k+q_{n+1}-q_{n}}\) and \(f^{q_{n-1}}(\Delta_{n}^{k})=\Delta_{n}^{k+q_{n-1}}\).
  Since the derivatives of \(f^{q_{n}}\) and \(f^{q_{n-1}}\) belong to the interval \([D^{-1}(c), D(c)]\), one of the intervals \(\Delta_{n}^{k+q_{n+1}-q_{n}}\subset\Delta\), \(\Delta_{n}^{k+q_{n-1}}\subset\Delta\) has length at least \(\frac{|I|}{2D(c)}\ge \frac{d(c)}{2D(c)}|\Delta|=r(c)|\Delta|\).
\end{proof}
The following corollary follows immediately from the previous lemma.
\begin{corollary}%
  \label{cor:large-interval}
  For each \(f\in B^{2+}_{irr}(p, c)\) there exists a constant \(C(f)>0\) such that \(\max_{0\leq k<q_n}{|\Delta_n^k|}\geq C(f)r^n(c)\)
\end{corollary}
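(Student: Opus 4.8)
The plan is to iterate \autoref{large_subinterval} along a single descending chain of partition intervals, starting from a scale above which that lemma is available. First I would fix \(N=N(f)\) large enough that \autoref{large_subinterval} holds for all \(n\ge N\), and pick any interval \(\Delta^{k_{0}}_{N-1}\) of \(\zeta_{N}\); put \(\ell_{0}=|\Delta^{k_{0}}_{N-1}|>0\), a positive number depending on \(f\) (through \(N\) and \(f\) itself).

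Next I would build recursively a nested sequence \(\Delta^{k_{0}}_{N-1}\supset\Delta^{l_{N}}_{N}\supset\Delta^{l_{N+1}}_{N+1}\supset\cdots\): given the interval \(\Delta^{l_{n-1}}_{n-1}\) already constructed (with the convention \(l_{N-1}=k_{0}\)), apply \autoref{large_subinterval} at level \(n\), with \(\Delta^{l_{n-1}}_{n-1}\) playing the role of \(\Delta^{k}_{n-1}\), to obtain a subinterval \(\Delta^{l_{n}}_{n}\) with \(|\Delta^{l_{n}}_{n}|\ge r(c)\,|\Delta^{l_{n-1}}_{n-1}|\). This recursion is legitimate because the interval produced by the lemma is of the form \(\Delta^{l}_{n}\) with \(0\le l<q_{n+1}\), i.e.\ it is exactly an interval \(\Delta^{k}_{(n+1)-1}\) to which \autoref{large_subinterval} applies again at the next level (and \(n+1\ge N\), so the largeness hypothesis is met). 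Multiplying the one-step estimates gives \(|\Delta^{l_{n}}_{n}|\ge r(c)^{\,n-N+1}\,\ell_{0}\) for every \(n\ge N\).

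Since \(\Delta^{l_{n}}_{n}\) is one of the intervals \(\Delta^{k}_{n}\), this already yields
\[
  \max_{0\le k<q_{n}}|\Delta^{k}_{n}|\ \ge\ \bigl(\ell_{0}\,r(c)^{1-N}\bigr)\,r^{n}(c),\qquad n\ge N,
\]
and for the finitely many remaining \(n<N\) the maximum is a positive number, so after possibly decreasing the constant we obtain the claim with a suitable \(C(f)>0\). I do not expect any real obstacle here: the argument is a routine iteration of the previous lemma. The only points that require a little attention are that \autoref{large_subinterval} is available only above an \(f\)-dependent threshold, which is handled by starting the chain at scale \(N(f)\) and absorbing the bounded range \(n<N\) into \(C(f)\); and the bookkeeping that the ``long'' subinterval obtained at each step is again an admissible input to the lemma at the next step, which is immediate from its index lying in \([0,q_{n+1})\).
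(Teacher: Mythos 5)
Your argument is correct and is precisely the iteration the paper intends: the paper gives no separate proof, stating only that the corollary ``follows immediately'' from \autoref{large_subinterval}, and your nested chain $\Delta^{k_0}_{N-1}\supset\Delta^{l_N}_N\supset\cdots$ with the finitely many levels $n<N(f)$ absorbed into $C(f)$ is the standard way to make that deduction explicit. The only cosmetic point is the index bookkeeping (the paper's statement writes $0\le k<q_n$ for the intervals $\Delta_n^k$, while your chain naturally produces an index in $[0,q_{n+1})$), which affects nothing up to the constant $C(f)$.
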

\begin{lemma}%
  \label{lem:frac-len-meas-lt}
  For any \(f\in B^{2+}_{irr}(p, c)\), for sufficiently large \(n\) such that \(a_{n+1}>1\) the following holds.
  For each interval \(\Delta_{n-1}^{k}\), \(0\le k<q_{n}\), there exists a subinterval \(\Delta_{n}^{l}\subset\Delta_{n-1}^{k}\), \(0\le l<q_{n+1}\), such that
  \[
    \frac{|\Delta_{n}^{l}|}{|\Delta_{n-1}^{k}|}\le (1-r(c))\frac{a_{n+1}+1}{a_{n+1}-1}\times\frac{\mu(\Delta_{n}^{l})}{\mu(\Delta_{n-1}^{k})}.
  \]
\end{lemma}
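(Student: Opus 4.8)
The plan is to combine the combinatorial structure of the dynamical partition with the \enquote{long subinterval} estimate of \autoref{large_subinterval}. Recall that the partition \(\zeta_{n+1}\) subdivides each interval \(\Delta_{n-1}^{k}\) into exactly \(a_{n+1}\) intervals of level \(n\), namely \(\Delta_{n}^{k+q_{n-1}+jq_{n}}\) for \(0\le j<a_{n+1}\), together with a single interval \(\Delta_{n+1}^{\bullet}\) sitting at the appropriate endpoint. Writing \(\mu_{n}\) for the common invariant measure of the intervals \(\Delta_{n}^{l}\), the measures of these pieces add up to \(\mu_{n-1}\), so \(\mu_{n-1}=a_{n+1}\mu_{n}+\mu_{n+1}\); since \(\mu_{n+1}\le\mu_{n}\) this yields \(\mu_{n-1}\le(a_{n+1}+1)\mu_{n}\), i.e. \(\mu(\Delta_{n}^{l})/\mu(\Delta_{n-1}^{k})\ge 1/(a_{n+1}+1)\) for every level-\(n\) subinterval \(\Delta_{n}^{l}\subset\Delta_{n-1}^{k}\).

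First I would apply \autoref{large_subinterval} to the interval \(\Delta_{n-1}^{k}\): for \(n\) large it produces one of the \(a_{n+1}\) level-\(n\) subintervals, say \(J_{*}\), with \(|J_{*}|\ge r(c)\,|\Delta_{n-1}^{k}|\). Since the \(a_{n+1}\) level-\(n\) pieces and the one level-\((n+1)\) piece have pairwise disjoint interiors and cover \(\Delta_{n-1}^{k}\), the union of the remaining \(a_{n+1}-1\) level-\(n\) subintervals has total length at most \(|\Delta_{n-1}^{k}|-|J_{*}|\le(1-r(c))\,|\Delta_{n-1}^{k}|\). This is the only place the hypothesis \(a_{n+1}>1\) enters: it guarantees that at least one such remaining interval exists. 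Hence the shortest of them, which I call \(\Delta_{n}^{l}\), satisfies
\[
  |\Delta_{n}^{l}|\le\frac{1-r(c)}{a_{n+1}-1}\,|\Delta_{n-1}^{k}|.
\]

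Finally I would just assemble the two estimates:
\[
  \frac{|\Delta_{n}^{l}|}{|\Delta_{n-1}^{k}|}\le\frac{1-r(c)}{a_{n+1}-1}
  =(1-r(c))\,\frac{a_{n+1}+1}{a_{n+1}-1}\cdot\frac{1}{a_{n+1}+1}
  \le(1-r(c))\,\frac{a_{n+1}+1}{a_{n+1}-1}\cdot\frac{\mu(\Delta_{n}^{l})}{\mu(\Delta_{n-1}^{k})},
\]
which is precisely the claimed bound. There is no genuine obstacle here: the lemma is essentially a pigeonhole argument repackaging \autoref{large_subinterval}. The only points needing a line of care are recalling the exact combinatorics of \(\zeta_{n+1}\) restricted to \(\Delta_{n-1}^{k}\) — so that the long interval supplied by \autoref{large_subinterval} is indeed one of the \(a_{n+1}\) level-\(n\) pieces and not the extra level-\((n+1)\) piece — and the additivity of the invariant measure across that subdivision.
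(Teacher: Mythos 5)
Your proposal is correct and follows essentially the same route as the paper's proof: apply \autoref{large_subinterval} to single out a long level-\(n\) subinterval, use pigeonhole on the remaining \(a_{n+1}-1\) level-\(n\) pieces to find one of length at most \(\frac{1-r(c)}{a_{n+1}-1}|\Delta_{n-1}^{k}|\), and combine with \(\mu(\Delta_{n}^{l})\ge \mu(\Delta_{n-1}^{k})/(a_{n+1}+1)\). The only difference is that you spell out the combinatorics of \(\zeta_{n+1}\) and the measure additivity, which the paper leaves implicit.
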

\begin{proof}
  The interval \(\Delta_{n-1}^{k}\) includes \(a_{n+1}\) subintervals \(\Delta_{n}^{l}\), \(0\le l<q_{n+1}\).
  Due to \autoref{large_subinterval}, one of them has length at least \(r(c)|\Delta_{n-1}^{k}|\), hence one of the other \(a_{n+1}-1\) subintervals has length at most \(\frac{1-r(c)}{a_{n+1}-1}|\Delta_{n-1}^{k}|\).
  On the other hand, \(\mu(\Delta_{n}^{l})\ge\frac{\mu(\Delta_{n-1}^{k})}{a_{n+1}+1}\).
  Dividing these two inequalities, we obtain the required estimate.
\end{proof}
Since \(\frac{a_{n+1}+1}{a_{n+1}-1}\to 1\) as \(a_{n+1}\to\infty\), we have the following estimate.
\begin{corollary}%
  \label{cor:large_a_n}
  For any \(\gamma>1-r(c)\) there exists \(A(c, \gamma)\) such that for any \(f\in B^{2+}_{irr}(p, c)\) for sufficiently large \(n\) such that \(a_{n+1}\ge A(c, \gamma)\) the following holds.
  For each interval \(\Delta_{n-1}^{k}\), \(0\le k<q_{n}\), there exists a subinterval \(\Delta_{n}^{l}\subset\Delta_{n-1}^{k}\), \(0\le l<q_{n+1}\), such that
  \[
    \frac{|\Delta_{n}^{l}|}{|\Delta_{n-1}^{k}|}\le \gamma\frac{\mu(\Delta_{n}^{l})}{\mu(\Delta_{n-1}^{k})}.
  \]
\end{corollary}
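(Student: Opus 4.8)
The plan is to read the corollary off \autoref{lem:frac-len-meas-lt} with almost no extra work. That lemma already hands us, for every interval $\Delta_{n-1}^{k}$ with $a_{n+1}>1$ and $n$ large, a subinterval $\Delta_{n}^{l}\subset\Delta_{n-1}^{k}$ satisfying
\[
  \frac{|\Delta_{n}^{l}|}{|\Delta_{n-1}^{k}|}\le (1-r(c))\,\frac{a_{n+1}+1}{a_{n+1}-1}\cdot\frac{\mu(\Delta_{n}^{l})}{\mu(\Delta_{n-1}^{k})}.
\]
So the only thing to do is to choose the threshold $A(c,\gamma)$ so large that the prefactor $(1-r(c))\frac{a+1}{a-1}$ is at most $\gamma$ for every integer $a\ge A(c,\gamma)$; then the same subinterval supplied by \autoref{lem:frac-len-meas-lt} does the job.

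For the explicit choice I would exploit the hypothesis $\gamma>1-r(c)$, which is precisely what makes the reduction work. Set $t=\gamma/(1-r(c))$, so $t>1$. The desired inequality $(1-r(c))\frac{a+1}{a-1}\le\gamma$ is equivalent to $\frac{a+1}{a-1}\le t$, i.e.\ $a+1\le t(a-1)$, i.e.\ $a\ge\frac{t+1}{t-1}$. Hence I would simply take
\[
  A(c,\gamma)=\left\lceil\frac{t+1}{t-1}\right\rceil,\qquad t=\frac{\gamma}{1-r(c)}\,,
\]
which depends only on $c$ and $\gamma$ because $r(c)$ is the universal constant of \autoref{large_subinterval}. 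Note also that $A(c,\gamma)\ge 2$, so the hypothesis $a_{n+1}\ge A(c,\gamma)$ automatically forces $a_{n+1}>1$ and \autoref{lem:frac-len-meas-lt} applies.

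I do not expect any genuine obstacle in this step: all the real content sits in \autoref{lem:frac-len-meas-lt}, which in turn rests on the ``long subinterval'' statement \autoref{large_subinterval} and on the uniform Denjoy-type derivative bound \autoref{lem:universal_derivative}; the present corollary is just the elementary observation that $\frac{a+1}{a-1}\to 1$ as $a\to\infty$. The one mildly delicate point worth stating carefully is uniformity: $A(c,\gamma)$ must be independent of $f$ and $\rho$, which it is by the formula above, and the bound on how large $n$ must be comes entirely from the thresholds already present in \autoref{lem:frac-len-meas-lt} and its ingredients, so it depends on $f$ only through those.
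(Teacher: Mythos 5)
Your proposal is correct and matches the paper's argument exactly: the corollary is deduced from \autoref{lem:frac-len-meas-lt} by choosing $A(c,\gamma)$ large enough that $(1-r(c))\frac{a+1}{a-1}\le\gamma$ for all $a\ge A(c,\gamma)$, which is possible precisely because $\gamma>1-r(c)$. Your explicit formula for $A(c,\gamma)$ and the remark that $A(c,\gamma)\ge 2$ guarantees $a_{n+1}>1$ are fine.
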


From now on, we fix some \(\gamma(c)\in(1-r(c), 1)\) and put \(A(c)=A(c, \gamma(c))\).
Consider the case when \(a_{n+1}\) is smaller than \(A(c)\).
We start with a simple observation.

\begin{lemma}%
  \label{lem:frac-len-succ-ge}
  Consider a map \(f\in B^{2+}_{irr}(p, c)\).
  Let \(n\) be so large that the conclusion of \autoref{lem:universal_derivative} holds for \(f^{q_{n-1}}\) and \(f^{q_{n}}\).
  Then for each \(k=0,\dotsc,q_{n}-1\) we have
  \begin{gather*}
    |\Delta_{n-1}^{k}| \le C(D, a_{n+1})|\Delta_{n}^{k}|,\\
    \intertext{where}
    C(D, a_{n+1})=D\left(1+\sum_{i=0}^{a_{n+1}-1}D^{i}\right).
  \end{gather*}
\end{lemma}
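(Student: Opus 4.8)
The plan is to relate the length of $\Delta_{n-1}^{k}$ to that of $\Delta_{n}^{k}$ by writing the former as a disjoint union of intervals each of which is an $f^{q_{n}}$-image of $\Delta_{n}^{k}$ up to an iterate, and then to control the lengths of these images via the uniform Denjoy estimate of \autoref{lem:universal_derivative}. Recall the combinatorial fact underlying the dynamical partitions: the interval $\Delta_{n-1}^{k}$ decomposes (up to endpoints) as the union of the $a_{n+1}$ intervals $\Delta_{n}^{k+iq_{n-1}}$, $i=0,\dots,a_{n+1}-1$, together with the single interval $\Delta_{n+1}^{k}$. Indeed this is exactly how $\zeta_{n+1}$ refines $\zeta_{n}$ inside a $\Delta_{n-1}$-interval. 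So
\[
  |\Delta_{n-1}^{k}| = |\Delta_{n+1}^{k}| + \sum_{i=0}^{a_{n+1}-1}|\Delta_{n}^{k+iq_{n-1}}|.
\]

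Next I would express each piece as an iterate of $\Delta_{n}^{k}$ under $f^{q_{n-1}}$. We have $\Delta_{n}^{k+iq_{n-1}} = f^{iq_{n-1}}(\Delta_{n}^{k})$, and since the intervals $\Delta_{n}^{k}, \Delta_{n}^{k+q_{n-1}}, \dots$ have pairwise disjoint interiors (they are distinct elements of $\zeta_{n+1}$), the map $f^{iq_{n-1}}$ restricted to $\Delta_{n}^{k}$ is a composition of $i$ ``long'' returns, and by \autoref{lem:universal_derivative} applied to each factor $f^{q_{n-1}}$ we get $|\Delta_{n}^{k+iq_{n-1}}| \le D^{i}(c)\,|\Delta_{n}^{k}|$ for $n$ large; here I use that the relevant intervals $f^{jq_{n-1}}(\Delta_{n}^{k})$, $0\le j\le i-1$, are all contained in $S^{1}$ with pairwise disjoint interiors, so no point is visited twice and the chain rule together with the uniform derivative bound applies. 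For the leftover interval, $\Delta_{n+1}^{k} \subset \Delta_{n-1}^{k}$ has the same invariant measure as $\Delta_{n}^{k}$, and in fact $f^{q_{n}}(\Delta_{n}^{k+q_{n+1}-q_{n}}) = \Delta_{n}^{k}$ type relations give $|\Delta_{n+1}^{k}| \le D(c)|\Delta_{n}^{k}|$ for large $n$ by the same uniform Denjoy bound (this is the $``D\cdot''$ factor in front). Summing,
\[
  |\Delta_{n-1}^{k}| \le \Bigl(D(c) + \sum_{i=0}^{a_{n+1}-1}D^{i}(c)\Bigr)|\Delta_{n}^{k}| = C(D, a_{n+1})\,|\Delta_{n}^{k}|.
\]

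The only genuine subtlety is verifying that the iterates $f^{jq_{n-1}}$ used above really decompose into single ``long returns'' to which \autoref{lem:universal_derivative} applies directly, i.e.\ that $q_{n-1}$ (not $q_{n}$) is the correct exponent and that the relevant collection of intervals is pairwise interior-disjoint; this is a standard property of the renormalization combinatorics (the $a_{n+1}$ copies of $\Delta_{n}$ inside $\Delta_{n-1}$ are spaced by $q_{n-1}$ iterates, cf.~\cite{Sinai_Khanin_1989}), but it must be invoked carefully. Once that combinatorial input is in hand, the estimate is just the chain rule plus a geometric sum, so I do not expect any real difficulty beyond bookkeeping — the main point, and the reason the bound is uniform in $f$, is that every derivative factor is controlled by the universal constant $D(c)$ from \autoref{lem:universal_derivative} rather than by a constant depending on $f$.
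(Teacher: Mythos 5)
Your overall strategy --- split \(\Delta_{n-1}^{k}\) into \(a_{n+1}\) level-\(n\) intervals plus one level-\((n+1)\) interval and bound each piece by iterating the uniform derivative estimate of \autoref{lem:universal_derivative}, then sum a geometric series --- is exactly the paper's. However, the combinatorial decomposition you invoke is wrong, and you resolve the very subtlety you flag in the wrong direction. The level-\(n\) subintervals of \(\Delta_{n-1}^{k}\) are not \(\Delta_{n}^{k+iq_{n-1}}\): already for \(i=0\), the interval \(\Delta_{n}^{k}=f^{k}([p..f^{q_{n}}(p)])\) lies on the opposite side of the endpoint \(f^{k}(p)\) from \(\Delta_{n-1}^{k}=f^{k}([p..f^{q_{n-1}}(p)])\) (the quantities \(q_{n}\rho-p_{n}\) alternate in sign), so it is not contained in \(\Delta_{n-1}^{k}\); and for \(i\ge 2\) the point \(f^{k+iq_{n-1}}(p)\) is in general not in \(\Delta_{n-1}^{k}\) at all. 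The correct refinement, reflecting \(q_{n+1}=a_{n+1}q_{n}+q_{n-1}\), is
\[
  \Delta_{n-1}^{k}=\Delta_{n+1}^{k}\cup\bigcup_{i=0}^{a_{n+1}-1}\Delta_{n}^{k+q_{n-1}+iq_{n}},
\]
so the copies of \(\Delta_{n}\) inside \(\Delta_{n-1}^{k}\) are spaced by \(q_{n}\) iterates after an initial shift by \(q_{n-1}\) --- the opposite of what you assert. The paper uses the slightly cruder covering \(\Delta_{n-1}^{k}\subset f^{-q_{n}}(\Delta_{n}^{k})\cup\bigcup_{i=0}^{a_{n+1}-1}f^{q_{n-1}+iq_{n}}(\Delta_{n}^{k})\).

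With the corrected decomposition your estimate does go through and reproduces the stated constant: by the mean value theorem and \autoref{lem:universal_derivative}, \(|f^{q_{n-1}+iq_{n}}(\Delta_{n}^{k})|\le D^{i+1}|\Delta_{n}^{k}|\) (one factor of \(D\) for \(f^{q_{n-1}}\) and \(i\) factors for \(f^{q_{n}}\)), while \(|\Delta_{n+1}^{k}|\le|f^{-q_{n}}(\Delta_{n}^{k})|\le D|\Delta_{n}^{k}|\); summing gives \(D\bigl(1+\sum_{i=0}^{a_{n+1}-1}D^{i}\bigr)=C(D,a_{n+1})\). Note that your indexing would instead produce \(D+\sum_{i=0}^{a_{n+1}-1}D^{i}\), which does not match the constant in the statement --- a sign that the bookkeeping is off. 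Finally, your worry about pairwise disjointness is unnecessary here: \autoref{lem:universal_derivative} is a pointwise bound on \((f^{q_{m}})'\), so \(|f^{q_{m}}(J)|\le D|J|\) for every interval \(J\) with no disjointness hypothesis; disjointness is only relevant for the distortion estimate~\eqref{eq:Im-frac-deriv}, which is not needed in this lemma.
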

\begin{proof}
  This lemma immediately follows from the inclusion
  \[
    \Delta_{n-1}^{k}\subset f^{-q_{n}}(\Delta_{n}^{k})\cup \bigcup_{i=0}^{a_{n+1}-1}f^{q_{n-1}+iq_{n}}(\Delta_{n}^{k})
  \]
  and the estimates on the derivatives of \(f^{q_{n}}\) and \(f^{q_{n-1}}\).
\end{proof}
\begin{lemma}%
  \label{lem:small_a_n}
  There exist two universal positive constants \(\alpha(c)\), \(\beta(c)\) such that for any \(f\in B^{2+}_{irr}(p, c)\) for sufficiently large \(n\) the following holds.
  Suppose \(a_{n+1}\leq A(c)\), \(a_{n+2}\leq A(c)\).
  Then for each \(\Delta = \Delta^k_{n-1}\) one can find a closed subinterval \(\hat\Delta\subset\Delta\cap f^{-q_{n}}(\Delta)\) of length at least \(|\hat\Delta|\ge \alpha(c)|\Delta|\) such that \(\left|\log {(f^{q_{n}})}'(x)\right|\ge \beta(c)\) for \(x\in\hat\Delta\).
\end{lemma}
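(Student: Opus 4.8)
The plan is to pass to the renormalized affine chart $w\in[-1,0]$ on $\Delta=\Delta_{n-1}^{k}$ with base point $x_{0}=f^{k}(p)$, in which $\Delta$ becomes $[-1,0]$ and $f^{q_{n}}$ becomes the map $h_{n}$ of \autoref{lem:everywhere_convexity}. We will use that $h_{n}'(w)=(f^{q_{n}})'(x)$ at the point $x$ corresponding to $w$, that $D^{-1}(c)<h_{n}'<D(c)$ by \autoref{lem:universal_derivative}, that $\hat{Q}_{1}(c)\le|h_{n}''|\le\hat{Q}_{2}(c)$ with a fixed sign on each smooth piece, and that $h_{n}$ has at most one break point on $[-1,0]$ (as in the proof of \autoref{large_subinterval}). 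The argument splits into two steps: (i) show that $\Delta\cap f^{-q_{n}}(\Delta)$ is an interval whose length is a fixed fraction of $|\Delta|$; (ii) use uniform convexity to extract from it a subinterval of fixed relative length on which $|\log h_{n}'|$ is bounded below.

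For step (i), a direct inspection of the dynamical partitions gives $f^{q_{n}}(\Delta)\cap\Delta=\Delta\setminus\Delta_{n}^{k+q_{n-1}}$. Since $f^{q_{n}}$ sends the endpoint of $\Delta$ at $w=-1$ into $\Delta$ and the endpoint at $w=0$ outside $\Delta$, the set $\Delta\cap f^{-q_{n}}(\Delta)=f^{-q_{n}}(\Delta\setminus\Delta_{n}^{k+q_{n-1}})$ is an interval $[-1,w^{0}]$ with $h_{n}(w^{0})=0$, and since $\int_{-1}^{w^{0}}h_{n}'=-h_{n}(-1)=1-|\Delta_{n}^{k+q_{n-1}}|/|\Delta|$, \autoref{lem:universal_derivative} yields $w^{0}+1\ge D^{-1}(c)\bigl(1-|\Delta_{n}^{k+q_{n-1}}|/|\Delta|\bigr)$. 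So it remains to bound $|\Delta_{n}^{k+q_{n-1}}|/|\Delta|$ away from $1$. Decompose $\Delta$ into its $\zeta_{n+1}$-subintervals: one interval $P$ of type $\Delta_{n+1}$ and $a_{n+1}$ intervals $Q_{0}=\Delta_{n}^{k+q_{n-1}},Q_{1},\dots,Q_{a_{n+1}-1}$ with $Q_{j+1}=f^{q_{n}}(Q_{j})$. If $a_{n+1}\ge2$, then $Q_{0},Q_{1}\subset\Delta$ are disjoint and $|Q_{1}|\ge D^{-1}(c)|Q_{0}|$ by \autoref{lem:universal_derivative}, so $|Q_{0}|\le(1+D^{-1}(c))^{-1}|\Delta|$. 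If $a_{n+1}=1$, then $\Delta\setminus Q_{0}=P=\Delta_{n+1}^{k}$, and here the hypotheses enter: \autoref{lem:frac-len-succ-ge} at level $n$ (where $C(D,1)=2D$) gives $|\Delta_{n}^{k}|\ge(2D(c))^{-1}|\Delta|$, and at level $n+1$ (using $a_{n+2}\le A(c)$) gives $|\Delta_{n+1}^{k}|\ge C(D(c),A(c))^{-1}|\Delta_{n}^{k}|$, so $|P|$ is a fixed positive fraction of $|\Delta|$. In either case $|\Delta\cap f^{-q_{n}}(\Delta)|\ge\alpha_{1}(c)|\Delta|$ for some $\alpha_{1}(c)>0$ depending only on $c$.

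For step (ii), removing the (at most one) break point of $h_{n}$ leaves a component $K$ of $\Delta\cap f^{-q_{n}}(\Delta)$ of length at least $\alpha_{1}(c)/2$ on which $h_{n}$ is $C^{2}$ and uniformly convex (up or down). Then $h_{n}'$ is strictly monotone on $K$ and changes by at least $\hat{Q}_{1}(c)|K|\ge\delta(c):=\tfrac12\hat{Q}_{1}(c)\alpha_{1}(c)$ between the endpoints of $K$, so at the endpoint of $K$ where $h_{n}'$ is extremal we have either $h_{n}'\le1-\delta(c)/2$ or $h_{n}'\ge1+\delta(c)/2$; moving a distance $\eta(c):=\min\bigl(\delta(c)/(4\hat{Q}_{2}(c)),\alpha_{1}(c)/4\bigr)$ into $K$ from that endpoint, the bound $|h_{n}''|\le\hat{Q}_{2}(c)$ keeps $h_{n}'$ on the same side of $1$, at distance at least $\delta(c)/4$ from $1$. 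On the resulting subinterval $\hat{\Delta}_{w}\subset K\subset\Delta\cap f^{-q_{n}}(\Delta)$ of length $\eta(c)$ we get $|\log h_{n}'|\ge\beta(c):=\log(1+\delta(c)/4)>0$, and pulling back to $x$-coordinates gives the asserted $\hat{\Delta}\subset\Delta\cap f^{-q_{n}}(\Delta)$ with $|\hat{\Delta}|\ge\alpha(c)|\Delta|$, $\alpha(c):=\eta(c)$, and $|\log(f^{q_{n}})'|\ge\beta(c)$ on it.

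The genuinely delicate point is step (i) in the case $a_{n+1}=1$: without control on $a_{n+2}$ the intersection $\Delta\cap f^{-q_{n}}(\Delta)$ can be an arbitrarily small fraction of $\Delta$, so this is exactly where the hypothesis $a_{n+2}\le A(c)$ is used, through \autoref{lem:frac-len-succ-ge} applied at two consecutive renormalization levels. Everything else is routine bookkeeping with dynamical partitions together with the uniform estimates of the previous subsection.
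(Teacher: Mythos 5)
Your proof is correct and follows essentially the same route as the paper's: lower-bound \(|\Delta\cap f^{-q_n}(\Delta)|\) by a definite fraction of \(|\Delta|\) using \autoref{lem:frac-len-succ-ge} and the bounds on \(a_{n+1},a_{n+2}\), then use the uniform convexity of \(h_n\) to extract a subinterval where \(|\log h_n'|\) is bounded below. The only differences are cosmetic: the paper gets the length bound from \(\Delta_{n+1}^{k}\subset f^{q_n}(I)\) rather than from your case split on \(a_{n+1}\), and your step (ii) spells out more carefully the trisection/convexity argument that the paper states tersely.
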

\begin{proof}
  First we prove a lower estimate on the length of the interval \(I=\Delta\cap f^{-q_{n}}(\Delta)=[f^{k+q_{n-1}}(p)..f^{k-q_{n}}(p)]\).
  Since \(a_{n+1}\le A(c)\) and \(a_{n+2}\le A(c)\), \autoref{lem:frac-len-succ-ge} implies that \(|\Delta_{n}^{k}|\ge C^{-1}(D(c), A)|\Delta|\) and \(|\Delta_{n+1}^{k}|\ge C^{-2}(D(c), A)|\Delta|\).
  Note that \(\Delta_{n+1}^{k}\subset f^{q_{n}}(I)\), hence \(|I|\ge \frac{|\Delta|}{D(c)C^{2}(D(c), A)}\).

  Now we divide \(I\) into three subintervals of equal length.
  We state that one of these subintervals satisfies all the requested properties.
  Each of them automatically satisfies the properties \(\hat\Delta\subset\Delta\cap f^{-q_{n}}(\Delta)\) and \(|\hat\Delta|\ge \alpha(c)|\Delta|\) with \(\alpha(c)=\frac{1}{3D(c)C^{2}(D(c), A)}\), so it suffices to prove that \(|\ln (f^{q_{n}})'(x)|\) is bounded away from zero on one of these subintervals.
  
  Note that \((f^{q_{n}})'(x)=h_{n}'(w(x))\), so it suffices to prove a similar estimate for \(h_{n}\).
  Using convexity of \(h_n\) we conclude that the estimate holds for either the left or the right of these subintervals with \(\beta(c)=\ln{(1+\hat{Q}_1\alpha(c)/2)}\).
\end{proof}
Assume now that several partial quotients \(a_{n+1}, a_{n+2}, \dots, a_{n+m}\) are all smaller than \(A(c)\).
\begin{lemma}%
  \label{several_small_a_n}
  There exist universal constants \(m(c) \in \bbN\) and \(0<\gamma_2'(c)<1\) such that for each \(f\in B^{2+}_{irr}(p, c)\) if \(n\) is large enough and \(a_{n+i}\leq A(c)\), \(1\leq i \leq m(c)+1\), then for any \(\Delta = \Delta^k_{n-1}\), \(0\leq k <q_n\), one can find a subinterval \(\tilde\Delta \subset \Delta\) which is an element of the partition \(\zeta_{n+m(c)}\) such that
  \[
    \frac{|\tilde\Delta|}{|\Delta |}\leq \gamma_2'(c)\frac{\mu(\tilde\Delta)}{\mu(\Delta)}.
  \]
\end{lemma}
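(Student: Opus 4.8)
The idea is to pick up where \autoref{lem:small_a_n} leaves off and to \enquote{compound} the derivative distortion of $f^{q_n}$ over $m(c)$ consecutive renormalization steps so as to beat the geometric growth of the measure ratio. Recall that for an element $\Delta=\Delta_{n-1}^{k}$ of $\zeta_n$, each of the $a_{n+1}$ subintervals $\Delta_n^l\subset\Delta$ satisfies $\mu(\Delta_n^l)\ge \mu(\Delta)/(a_{n+1}+1)\ge \mu(\Delta)/(A(c)+1)$, and iterating this $m$ times, any $\tilde\Delta\in\zeta_{n+m}$ inside $\Delta$ has $\mu(\tilde\Delta)\ge (A(c)+1)^{-m}\mu(\Delta)$. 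So to get $|\tilde\Delta|/|\Delta|\le \gamma_2'(c)\,\mu(\tilde\Delta)/\mu(\Delta)$ it suffices to produce, for a suitable universal $m=m(c)$, an element $\tilde\Delta\in\zeta_{n+m}$ with $\tilde\Delta\subset\Delta$ and
\[
  \frac{|\tilde\Delta|}{|\Delta|}\le \bigl(A(c)+1\bigr)^{-m}\cdot\gamma_2'(c),
\]
i.e. $\tilde\Delta$ is exponentially (in $m$) shorter than $\Delta$, with a rate strictly better than $(A(c)+1)^{-1}$.

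\textbf{Key steps.} First I would iterate \autoref{lem:small_a_n}: at step $n$ it gives a subinterval $\hat\Delta\subset\Delta\cap f^{-q_n}(\Delta)$ with $|\hat\Delta|\ge\alpha(c)|\Delta|$ on which $\bigl|\ln(f^{q_n})'\bigr|\ge\beta(c)$; WLOG $(f^{q_n})'<e^{-\beta(c)}$ on $\hat\Delta$ (otherwise replace $\Delta$ by $f^{-q_n}(\Delta)$, which lies in $\zeta_n$ too, and run the symmetric argument), so $f^{q_n}$ contracts $\hat\Delta$ by a factor $e^{-\beta(c)}$. Second, since $\hat\Delta\subset\Delta$ has length at least $\alpha(c)|\Delta|$, a fixed positive fraction of the $\zeta_{n+m_0}$-subintervals of $\Delta$ lie in $\hat\Delta$ once $m_0=m_0(c)$ is chosen with $\Lambda(c)^{m_0/2}<\alpha(c)$ — this uses \autoref{cor:exp-decay-max} applied to $Rf_n$ (so the bound is uniform), giving at least one $\Delta_{n+m_0}^{l}\subset\hat\Delta$ with $\mu$-measure $\ge(A(c)+1)^{-m_0}\mu(\Delta)$. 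Third, and this is the engine: track such an interval, call it $K_0\subset\hat\Delta$, under $f^{q_n}$. Because $\bigcup_{i=0}^{a_{n+1}-1}f^{iq_n}$ acts by iterates of a map contracting $\hat\Delta$, and because on each visit to (a scaled copy of) $\hat\Delta$ the derivative of $f^{q_n}$ picks up another factor $e^{-\beta(c)}$, after $j$ such visits the image has length $\le e^{-j\beta(c)}\cdot D(c)\cdot|K_0|$ by the distortion bound \autoref{lem:universal_derivative}; choosing $m(c)$ large enough that $e^{-\beta(c)\lfloor cm(c)\rfloor}D(c)(A(c)+1)^{m(c)}<1$ for a definite $c>0$ counting guaranteed visits, one of the intervals $f^{iq_n+jq_{n-1}}(K_0)$, which is again an element of $\zeta_{n+m(c)}$ inside $\Delta$ (up to relabeling indices exactly as in the last paragraph of \autoref{large_subinterval}), is the desired $\tilde\Delta$. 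Finally set $\gamma_2'(c)$ to be the resulting constant, bounded away from $1$ uniformly.

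\textbf{Main obstacle.} The delicate point is the bookkeeping in the third step: arranging that the chosen short subinterval is \emph{actually an element of $\zeta_{n+m(c)}$ sitting inside $\Delta$}, while simultaneously guaranteeing that the orbit of $K_0$ under the first-return dynamics revisits the contracting region $\hat\Delta$ (or its renormalized analogues at steps $n+1,\dots,n+m(c)$) a definite number of times. The assumption $a_{n+i}\le A(c)$ for $1\le i\le m(c)+1$ is exactly what makes the combinatorics of these returns uniformly controlled — with bounded partial quotients the orbit of any point under $Rf_n$ on $[-1,\alpha_n]$ visits a neighborhood of the contracting part within a bounded number of steps — but turning \enquote{bounded number of steps} into \enquote{at least $cm(c)$ contracting factors accumulated over $m(c)$ renormalization levels} requires carefully composing the estimates \eqref{eq:Im-deriv}, \eqref{eq:Im-frac-deriv}, \autoref{lem:everywhere_convexity}, and \autoref{lem:small_a_n} across levels, keeping every constant independent of $f$, $\rho$, and $n$. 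Once that is done, the inequality $|\tilde\Delta|/|\Delta|\le\gamma_2'(c)\mu(\tilde\Delta)/\mu(\Delta)$ is immediate, and \autoref{thm:zeta-decay} follows by combining this lemma with \autoref{cor:large_a_n} and setting $\gamma_2(c)=\hat\gamma_2(c)^{1/m(c)}$ as indicated in the text.
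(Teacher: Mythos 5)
There is a genuine gap, and it originates in your very first reduction. You replace the measure ratio \(\mu(\tilde\Delta)/\mu(\Delta)\) by its worst-case lower bound \((A(c)+1)^{-m}\) and then try to produce an element of \(\zeta_{n+m}\) of length at most \((A(c)+1)^{-m}\gamma_2'(c)|\Delta|\). This target is exponentially too ambitious for the mechanism you propose. The only contraction you can extract from \autoref{lem:small_a_n} is a factor \(e^{-\beta(c)}\) per visit to \(\hat\Delta\), with \(\beta(c)=\ln(1+\hat Q_1(c)\alpha(c)/2)\) a fixed, typically small constant, while \(A(c)=A(c,\gamma(c))\) is chosen \emph{large} in \autoref{cor:large_a_n} so that \(\frac{a+1}{a-1}\) is close to \(1\). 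With at most one guaranteed contracting visit per level, your own displayed requirement \(e^{-\beta(c)\lfloor cm\rfloor}D(c)(A(c)+1)^{m}<1\) reduces to \(e^{\beta(c)c}>A(c)+1\), which there is no reason to expect; and if it fails for one level it fails for all \(m\), since both sides scale geometrically in \(m\). Concretely, when all \(a_{n+i}=1\) the measure ratios \(\mu(\tilde\Delta)/\mu(\Delta)\) are of golden-mean order \(\phi^{-m}\), vastly larger than \((A(c)+1)^{-m}\), so you have discarded exactly the slack that makes the lemma provable. On top of this, the \enquote{engine} of your third step --- the bookkeeping guaranteeing \(\lfloor cm\rfloor\) accumulated contracting factors across renormalization levels while staying inside \(\zeta_{n+m}\) --- is precisely the part you do not construct, and you acknowledge as much.

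The paper avoids compounding altogether. It fixes \(m(c)\) only so that \(\Lambda(c)^{\lfloor m(c)/2\rfloor}<\alpha(c)/2\), which guarantees (via \autoref{lem:decay-zeta}) that some element \(\Delta'\in\zeta_{n+m(c)}\) lies inside the single interval \(\hat\Delta\) furnished by \autoref{lem:small_a_n} at level \(n\). Then \(\Delta'\) and \(f^{q_n}(\Delta')\) are two elements of the partition inside \(\Delta\) with equal \(\mu\)-measure but Lebesgue lengths differing by a factor \(\ge e^{\beta(c)}\). Writing the partition of \(\Delta\) as two conditional probability vectors \(\mathbf P=(\mathbf p_i)\) and \(\mathbf Q=(\mathbf q_i)\) with \(\sum\mathbf p_i=\sum\mathbf q_i=1\) and \(\mathbf q_i\ge\eps(c)=(A(c)+1)^{-m(c)-1}\), the existence of indices with \(\mathbf q_i=\mathbf q_j\) and \(\mathbf p_i/\mathbf p_j\ge e^{\beta(c)}\) forces, by normalization, some \(s\) with \(\mathbf p_s/\mathbf q_s\le\max\left(e^{-\beta(c)/2},\frac{1-\eps(c) e^{\beta(c)/2}}{1-\eps(c)}\right)<1\). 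The point is that one is comparing a length ratio to a \emph{measure} ratio index by index, not to a uniform exponential bound, so a single level of definite distortion suffices; no cross-level composition of contractions is needed.
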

\begin{proof}
  Choose \(m(c)\) so that \({\Lambda(c)}^{\lfloor\frac{m(c)}{2}\rfloor}<\frac{\alpha(c)}2\), where \(\Lambda(c)\) is the same as in \autoref{lem:decay-zeta}, then choose \(N=N(f)\) such that for \(n\ge N\) the estimates from all previous lemmas hold true.
  A partitioning of any interval \(\Delta = \Delta^k_{n-1}, 0\leq k <q_n\) onto smaller subintervals corresponding to the partition \(\zeta_{n+m(c)}\) generates two finite conditional probability distributions: \(\mathbf P=(\mathbf p_i, 1\leq i \leq M)\) corresponding to the Lebesgue measure, and \(\mathbf Q=(\mathbf q_i, 1\leq i \leq M)\) related to the invariant measure \(\mu\).
  Since \(a_{n+i}\leq A(c)\), \(1\leq i \leq m(c)+1\), we have \(\mathbf q_{i}\ge \eps(c)={(A+1)}^{-m(c)-1}\), \(i=1,\dotsc,M\), and \(M\leq1/\eps(c)\).
  We have to show that there exists \(0<\gamma_2'(c)<1\) such that \(\mathbf p_i\leq\gamma_2'(c)\mathbf q_i\) for at least one index \(i\).
  Denote \(\hat{\Delta}\) be the subinterval of \(\Delta\) provided by \autoref{lem:small_a_n}.
  Due to the choice of \(m(c)\) and \autoref{lem:decay-zeta}, the lengths of all elements of the partition \(\zeta_{n+m(c)}\) inside \(\Delta\) are less than \(\frac{\alpha(c)}{2}|\Delta|\), hence one of these elements \(\Delta'\) is included by \(\hat{\Delta}\).
  Since the derivative \((f^{q_{n}})'\) is uniformly bounded away from 1 on \(\hat\Delta\), we have \(\left|\ln{\frac{|f^{q_{n}}(\Delta')|}{|\Delta'|}}\right|\geq \beta(c)\).
  Hence, there exists a pair \(1\leq i,j\leq M\) such that \(\frac{\mathbf p_i}{\mathbf p_j}\geq \exp{(\beta(c))}\) and \(\mathbf q_i=\mathbf q_j\).

  Since \(\frac{\mathbf p_{i}/\mathbf q_{i}}{\mathbf p_{j}/\mathbf q_{j}}\ge \exp{(\beta(c))}\), we have either \(\frac{\mathbf p_{j}}{\mathbf q_{j}}\le \exp{\left(-\frac{\beta(c)}{2}\right)}\), or \(\frac{\mathbf p_{i}}{\mathbf q_{i}}\ge \exp{\left(\frac{\beta(c)}{2}\right)}\).
  In the former case we already have the desired estimate.
  In the latter case the inequalities \(1>\mathbf p_{i}\ge \exp{\left(\frac{\beta(c)}{2}\right)}\mathbf q_{i}\) and \(\mathbf q_{i}\ge \eps\) imply
  \[
    \frac{\sum_{s\ne i}\mathbf p_{s}}{\sum_{s\ne i}\mathbf q_{s}}=\frac{1-\mathbf p_{i}}{1-\mathbf q_{i}}\le \frac{1-\eps\exp{\left(\frac{\beta(c)}{2}\right)}}{1-\eps},
  \]
  hence one of the ratios \(\frac{\mathbf p_{s}}{\mathbf q_{s}}\), \(s\ne i\), is less than or equal to the same constant.
\end{proof}

Now we are ready to prove the second part of \autoref{thm:zeta-decay}.
\begin{lemma}%
  \label{small_intervals}
  There exists a universal constant \(0<\gamma_2(c)<1\) such that for every \(f\in B^{2+}_{irr}(p, c)\) we have
  \[
    \min_{0\leq k<q_n}{|\Delta_{n-1}^k|}=O(\gamma_2^n(c)\mu_{n-1}),
  \]
  as \(n\to\infty\), where \(\mu_{n-1}= |q_{n-1}\rho-p_{n-1}|\).
\end{lemma}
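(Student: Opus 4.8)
The plan is the one announced just before \autoref{large_subinterval}: reduce \eqref{eq:zeta-min-delay} to a one-block contraction for the quantity \(|\Delta|/\mu(\Delta)\) and then iterate it. The reduction is: it suffices to exhibit constants \(m(c)\in\bbN\) and \(\hat\gamma_{2}(c)\in(0,1)\) such that for every sufficiently large \(n\) and every \(\Delta=\Delta_{n-1}^{k}\) there is a subinterval \(\tilde\Delta\subset\Delta\) which is an element of \(\zeta_{n+m(c)}\) of the form \(\Delta_{n+m(c)-1}^{l}\) (possibly after re-indexing, if it lands in the other family of \(\zeta_{n+m(c)}\)) with
\[
  \frac{|\tilde\Delta|}{\mu(\tilde\Delta)}\;\le\;\hat\gamma_{2}(c)\,\frac{|\Delta|}{\mu(\Delta)}.
\]
Granting this, one iterates from any \(\Delta^{(0)}\in\zeta_{N}\), \(N\) large, getting a nested sequence \(\Delta^{(0)}\supseteq\Delta^{(1)}\supseteq\cdots\) with \(\Delta^{(t)}=\Delta_{N+tm(c)-1}^{k_{t}}\) and \(|\Delta^{(t)}|/\mu(\Delta^{(t)})\le\hat\gamma_{2}(c)^{t}\,|\Delta^{(0)}|/\mu(\Delta^{(0)})\), so that \(\min_{k}|\Delta_{n-1}^{k}|\le\hat\gamma_{2}(c)^{t}C(f)\mu_{n-1}\) for \(n=N+tm(c)\). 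For general \(n\) one interpolates using the elementary estimate \(\min_{0\le k<q_{n+1}}|\Delta_{n}^{k}|/\mu_{n}\le 2\,\min_{0\le k<q_{n}}|\Delta_{n-1}^{k}|/\mu_{n-1}\), valid because one of the \(a_{n+1}\) subintervals \(\Delta_{n}^{\cdot}\) of the minimizing \(\Delta_{n-1}^{k}\) has length at most \(|\Delta_{n-1}^{k}|/a_{n+1}\) while \(\mu_{n-1}\le 2a_{n+1}\mu_{n}\). This gives \eqref{eq:zeta-min-delay} with \(\gamma_{2}(c)=\hat\gamma_{2}(c)^{1/m(c)}\).

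To prove the one-block contraction I would split into cases on the partial quotients \(a_{n+1},\dots,a_{n+m(c)+1}\), using the two complementary mechanisms already set up. If they are all \(\le A(c)\), \autoref{several_small_a_n} directly produces \(\tilde\Delta\in\zeta_{n+m(c)}\) with factor \(\gamma_{2}'(c)\). Otherwise, let \(j\le m(c)+1\) be least with \(a_{n+j}\ge A(c)\); then one first descends from \(\Delta\in\zeta_{n}\) to a suitably chosen partition element of \(\zeta_{n+j-1}\) inside \(\Delta\) \emph{without increasing} the ratio \(|\cdot|/\mu(\cdot)\) — this rests on the weighted-mediant observation that, since both \(|\Delta|\) and \(\mu(\Delta)\) split additively over the elements of the next partition contained in \(\Delta\), at least one such element has ratio no larger than that of \(\Delta\) — and then one applies \autoref{cor:large_a_n} at level \(n+j-1\), where the next partial quotient is \(a_{n+j}\ge A(c)\), to gain a genuine factor \(\gamma(c)\); if \(j<m(c)\) one keeps descending in the same non-increasing way (using \autoref{lem:decay-zeta} to keep the intermediate intervals nested inside \(\Delta\)) until reaching \(\zeta_{n+m(c)}\). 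Taking \(\hat\gamma_{2}(c)=\max\bigl(\gamma(c),\gamma_{2}'(c)\bigr)<1\) then closes the argument.

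Most of the substantive work has already been done in \autoref{cor:large_a_n} and \autoref{several_small_a_n}, so what remains — and what I expect to be the genuinely delicate point — is the bookkeeping that glues the two cases together uniformly in \(\rho\): one must check that passing through blocks of bounded partial quotients (the descents that are neither covered by \autoref{several_small_a_n} nor immediately followed by a large quotient) really does not erode the single genuine contraction, and that at every step one lands on a partition element of exactly the family to which \autoref{cor:large_a_n} or \autoref{several_small_a_n} applies. This is where the monotonicity of \(|\Delta|/\mu(\Delta)\) under refinement, the Denjoy-type distortion bound \eqref{eq:Im-deriv}, and the precise numerical ranges of the constants \(A(c)\), \(m(c)\), \(\gamma(c)\), \(\gamma_{2}'(c)\) from the previous lemmas must be combined carefully.
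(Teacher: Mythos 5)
Your proposal is correct and follows essentially the same route as the paper's proof: descend through the partitions using the fact that some subinterval never increases the ratio \(|\cdot|/\mu(\cdot)\), gain a genuine contraction factor on each block of \(m(c)+1\) steps via \autoref{cor:large_a_n} when some \(a_i\ge A(c)\) and via \autoref{several_small_a_n} otherwise, and iterate. The paper's own proof is in fact terser than yours; your extra bookkeeping (the interpolation for general \(n\) and the re-indexing between the two families of \(\zeta_n\)) only makes explicit details the paper leaves implicit.
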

\begin{proof}
  Every interval \(\Delta_{n-1}^k\) can be obtained by a sequence of partitions. On every step of this sequence a given interval
  is divided onto a finite number of subintervals. As above we have two finite conditional  probability distributions
  corresponding to the Lebesgue measure and the invariant measure \(\mu\). It follows immediately
  that on every step we can always find a subinterval \(\Delta'\) of \(\Delta\) such that \(\frac{|\Delta'|}{|\Delta |}\leq \frac{\mu(\Delta')}{\mu(\Delta)}\). However, on many steps of partitioning \(\frac{|\Delta'|}{|\Delta |}\) will be constant times smaller than
  \(\frac{\mu(\Delta')}{\mu(\Delta)}\). Indeed, by \autoref{cor:large_a_n} it happens every time when \(a_i\geq A(c)\).
  By \autoref{several_small_a_n} the same is true when we have a sequence of length \(m(c)+1\) of consecutive \(a_i\leq A(c)\).
  We can divide all the steps into blocks of length \(m(c)+1\). If the block contains at least one \(a_i\geq A(c)\) we have contraction of the ratio
  by the first reason, and if all \(a_i\leq A(c)\) contraction is related to the second mechanism. Hence, since the number of blocks is of
  the order of \(\sfrac{n}{(m(c)+1)}\), the statement of the lemma holds.
\end{proof}

\section{Proof of the main theorem}\label{sec:proof-main}

The main idea of the proof is very simple.
Let \(\frac{p_{n}}{q_{n}}\) be the \(n\)-th continued fraction convergent of \(\rho(f)=\rho(g)\), let \(J\) be the smallest of the intervals \(\Delta^k_{n-1}\), \(0\leq k <q_n\).
If \(h\) is \(C^{2}\) smooth, then due to \autoref{lem:sum-sqr-le} and \autoref{lem:universal_derivative} we have \(\sum_{k=0}^{q_{n}-1}|f^{k}(J)|^{2}=o(|J|)\).
On the other hand, \(\sum_{k=0}^{q_n-1} |\Delta_{n-1}^k|^2 \geq |J|\sum_{k=0}^{q_n-1} |\Delta_{n-1}^k|\geq C|J|\). This, skipping very minor technical details, gives a contradiction which proves that  \(h\) cannot be \(C^2\) smooth. However, since we want to show that the smoothness of \(h\)
is strictly weaker than \(C^2\) in the Hölder sense, the proof must be more technical, and we need estimates from \autoref{thm:zeta-decay}.

From now on, we assume that \(n\) is so large that all estimates from the lemmas in \autoref{sec:decay-interv-part} hold true.
Let \(\breve\Delta_{n}=\Delta_{n-1}^{l_{n}}\) be the smallest of the intervals \(\Delta_{n-1}^{k}\), \(0\le k<q_{n}\).
The point \(f^{l_{n}-q_{n}}(p)\) splits~\(\breve\Delta_{n}\) in two subintervals.
Let \(J_{n}\) be one of these subintervals with the larger value of \(\sum_{k=0}^{q_n-1}{\left|f^{k}(J_n)\right|}^{2}\).
Then 
\begin{equation}
  \label{eq:sum-sqr-le-sum-sqr}
  \sum_{k=0}^{q_n-1}{\left|f^{k}(J_n)\right|}^{2}\ge\frac{1}{2}\sum_{k=0}^{q_n-1}{\left|f^{k}(\breve\Delta_n)\right|}^{2}\ge \frac{1}{2D(c)}\sum_{k=0}^{q_{n}-1}\left|\Delta_{n-1}^{k}\right|^{2}.
\end{equation}

Note that none of the intervals \(f^{k}(int(J_{n}))\), \(0\le k<q_{n}\), contains~\(p\). Due to \autoref{cor:exp-decay-max} and \autoref{lem:universal_derivative},
\[
  \max_{0\le k<q_{n}}\left|f^{k}(J_{n})\right|\to0
\]
as \(n\to\infty\), thus we can apply \autoref{lem:sum-sqr-le} to \(J_{n}\).
If \(h\) is \(C^{1+\alpha}\), \(0<\alpha<1\), then we have
\begin{align*}
  \sum_{k=0}^{q_{n}-1}\left|\Delta_{n-1}^{k}\right|^{2}&=O\left(\sum_{k=0}^{q_n-1}{\left|f^{k}(J_n)\right|}^{2}\right) &&\text{due~to~\eqref{eq:sum-sqr-le-sum-sqr}}\\
                                                       &=O\left({|J_n|}^{\alpha}+{|f^{q_n}(J_n)|}^{\alpha}\right) &&\text{due~to~\autoref{lem:sum-sqr-le}}\\
                                                       &=O\left({|J_n|}^{\alpha}\right) &&\text{due~to~\autoref{lem:universal_derivative}}\\
                                                       &=O\left({\left|\breve\Delta_n\right|}^{\alpha}\right) && \text{since \(J_{n}\subset\breve\Delta_{n}\)}\\
                                                       &=O\left(\gamma_{2}(c)^{n\alpha}|q_{n-1}\rho-p_{n-1}|^{\alpha}\right) && \text{due~to~\autoref{small_intervals}}\\
                                                       &=O\left(\frac{\gamma_{2}(c)^{n\alpha}}{q_{n}^{\alpha}}\right) && \text{since \(|q_{n-1}\rho-p_{n-1}|<\frac{1}{q_{n}}\)}
\end{align*}

Now let us prove a lower estimate on \(\sum_{k=0}^{q_{n}-1}\left|\Delta_{n-1}^{k}\right|^{2}\) that will contradict this estimate for \(\alpha\) sufficiently close to \(1\).
First, due to \autoref{cor:large-interval} we have
\[
  r^{2n}(c)=O\left(\max_{0\le k<q_{n}}\left|\Delta_{n-1}^{k}\right|^{2}\right)=O\left(\sum_{k=0}^{q_{n}-1}\left|\Delta_{n-1}^{k}\right|^{2}\right).
\]
Next, Cauchy inequality implies that
\[
  \sum_{k=0}^{q_{n}-1}\left|\Delta_{n-1}^{k}\right|^{2}\ge\frac{1}{q_{n}}\left(\sum_{k=0}^{q_{n}-1}\left|\Delta_{n-1}^{k}\right|\right)^{2}\ge\frac{1}{(1+D(c))^{2}}\frac{1}{q_{n}},
\]
where the last inequality follows from \(S^{1}=\bigcup_{k=0}^{q_{n}-1}\left(\Delta_{n-1}^{k}\cup f^{q_{n}}\left(\Delta_{n-1}^{k}\right)\right)\) and \autoref{lem:universal_derivative}.

Combining lower and upper estimates on \(\sum_{k=0}^{q_{n}-1}\left|\Delta_{n-1}^{k}\right|^{2}\), we obtain
\begin{align*}
  r^{2n}(c)&=O\left(\frac{\gamma_{2}(c)^{n\alpha}}{q_{n}^{\alpha}}\right) &\frac{1}{q_{n}}&=O\left(\frac{\gamma_{2}(c)^{n\alpha}}{q_{n}^{\alpha}}\right).
\end{align*}
Take \(\nu(c)=\frac{\ln\gamma_{2}(c)}{2\ln r(c)+\ln \gamma_{2}(c)}\), then we have
\[
  \left(\frac{\gamma_{2}(c)^{n}}{q_{n}}\right)^{1-\nu}=\left(r^{2n}(c)\right)^{\nu}\left(\frac{1}{q_{n}}\right)^{1-\nu}=O\left(\frac{\gamma_{2}(c)^{n\alpha}}{q_{n}^{\alpha}}\right),
\]
hence \(\alpha\le 1-\nu\). This completes the proof of \autoref{thm:main}.

\section{Conjugacy of fractional linear maps}\label{sec:conj-fract-line}
\autoref{thm:main} shows that in some cases two maps of class \(B^r(p, c)\) with the same rotation number are not \(C^{2}\) conjugate.
We expect that this is true in many more cases.
In this section we discuss the case of linear fractional maps.

Recall (see \cites[Proposition 4.6 and Theorem 2]{Khanin_Vul_1991}[Assertions 1, 2]{Teplinskii_Khanin_2004}[Theorem 4]{Ghazouani_Khanin_2021}) that renormalizations of a map \(f\in B^{r}(p, c)\) tend to a pair of linear fractional maps of the form
\begin{align}
  \label{eq:FG}
  F_{\alpha,v,c}(z)&=\frac{\alpha+\sqrt{c}z}{1-vz}, & G_{\alpha, v, c}(z)&=\frac{\alpha(z-\sqrt{c})}{\alpha\sqrt{c}+z(1+v-\sqrt{c})}, &(\alpha, v)&\in U_{c}.
\end{align}
We used one half of this statement earlier, see \autoref{sec:decay-interv-part}, \autoref{thm:dist-fnFn}.
It is easy to see that both \(F_{\alpha,v,c}\) and \(G_{\alpha,v,c}\) are increasing maps, \(F_{\alpha, v, c}(0)=\alpha\), \(G_{\alpha, v, c}(0)=-1\), and \(F_{\alpha,v,c}(-1)=G_{\alpha,v,c}(\alpha)=\frac{\alpha-\sqrt{c}}{1+v}\), hence the map \(T_{\alpha,v,c}\colon \sfrac{[-1, \alpha]}{(-1\sim \alpha)}\) given by \(F_{\alpha,v,c}\) on \([-1, 0]\) and by \(G_{\alpha,v,c}\) on \([0, \alpha]\) is a well-defined homeomorphism of the circle, cf. \(Rf_{m}\) in \autoref{sec:decay-interv-part}.
Maps \(T_{\alpha, v, c}\) with a fixed irrational rotation number belong to a smooth curve on the \((\alpha,v)\) plane, see~\cite{Khanin_Teplinsky_2013}.

Because of the special role of the family~\eqref{eq:FG}, it is natural to ask whether two different maps \(T_{\alpha,v,c}\) can be \(C^{r}\) conjugate to each other with high values of \(r\).
Arguments of \autoref{thm:main} do not apply in this case since the Schwarzian derivative vanishes for Möbius maps.
However, the following argument suggested by Selim Ghazouani allows us to show that two different maps \(T_{1}=T_{\alpha_{1}, v_{1}, c}\) and \(T_{2}=T_{\alpha_{2}, v_{2}, c}\) with an irrational rotation number cannot be conjugate by a piecewise \(C^{3}\)-smooth map that sends zero to zero.

The latter condition is technical and probably can be omitted but it is natural to impose this restriction and it greatly simplifies the proof.

Assume that \(h\circ T_{1}=T_{2}\circ h\), where \(h\) is a piecewise \(C^{3}\)-smooth homeomorphism of the circle, \(h(0)=0\).
First we show that \(h\) has to be a piecewise fractional linear map.
Indeed, for any \(x\) such that all the derivatives below exist, we have
\[
  S(h)(T_{1}(x)){(T_{1}(x))}^{2}=S(h\circ T_{1})(x)=S(T_{2}\circ h)=S(h).
\]
Note that \(\rho(x)=\sqrt{|S(h)(x)|}\) satisfies the equation \(\rho(T_{1}(x))=\frac{1}{T_{1}'(x)}\rho(x)\) at all but finitely many points, hence \(\rho\) is the density of a \(T_{1}\)-invariant measure.
It is well-known that a circle maps with breaks have singular invariant measures, thus \(\rho=0\), therefore \(h\) is a piecewise fractional linear map.

Now let us show that \(T_{1}=T_{2}\).
Instead of dealing with these maps, we will compare their \(n\)-th renormalizations for sufficiently large \(n\).
The renormalization operator sends \((F_{\alpha,v,c}, G_{\alpha,v,c})\) to a pair of the same form with \(1/c\) playing the role of \(c\).
Note that the renormalization operator is invertible on the space of maps \(T_{\alpha, v, c}\): if two maps \(T_{\alpha_{1}, v_{1}, c}\) and \(T_{\alpha_{2}, v_{2}, c}\) have the same irrational rotation number and their \(n\)-th renormalizations are equal for some \(n\), then the original maps are equal as well.

Fix a sufficiently large number \(n\) such that each of the intervals \([0..T_{i}^{q_{n-1}}(0)]\), \([0..T_{i}^{q_{n}}]\), \(i=1,2\), is contained in a single interval of smoothness of the conjugacy \(h\) or \(h^{-1}\).
The renormalized maps are Möbius transformations.
Let \(A_{i}=\left(\begin{smallmatrix}\sqrt{c}&a_{i}\\-w_{i}&1\end{smallmatrix}\right)\), \(B_{i}=\left(\begin{smallmatrix}a_{i}&-a_{i}\sqrt{c}\\1+w_{i}-\sqrt{c}&a_{i}\sqrt{c}\end{smallmatrix}\right)\) be the matrices that represent the renormalized maps; let \(C\) be the matrix that represents the restriction of the renormalization of \(h\) to \([-1, 0]\).
Then we have \(CA_{1}C^{-1}\sim A_{2}\) and \(CB_{1}A_{1}C^{-1}\sim B_{2}A_{2}\), where \(\sim\) means that two matrices are proportional.
Indeed, in both cases the corresponding maps coincide on a nontrivial interval: it is \([-1, 0]\) in the first case and \([A_{2}^{-1}(0),0]\) in the second case.

Therefore, \(CA_{1}C^{-1}\sim A_{2}\) and \(CB_{1}C^{-1}\sim B_{2}\).
Since \(h(0)=0\), we have \(C\left(\begin{smallmatrix}0\\1\end{smallmatrix}\right)\sim \left(\begin{smallmatrix}0\\1\end{smallmatrix}\right)\).
Next, \(B_{i}\left(\begin{smallmatrix}0\\1\end{smallmatrix}\right)\sim \left(\begin{smallmatrix}-1\\1\end{smallmatrix}\right)\), hence \(C\left(\begin{smallmatrix}-1\\1\end{smallmatrix}\right)\sim \left(\begin{smallmatrix}-1\\1\end{smallmatrix}\right)\).
Therefore, after multiplication by a constant we may assume that \(C=\left(\begin{smallmatrix}1&0\\u&u+1\end{smallmatrix}\right)\).
Note that \(\Tr(CA_{1}C^{-1})=\Tr(A_{1})=1+\sqrt{c}=\Tr(A_{2})\) and \(CA_{1}C^{-1}\sim A_{2}\) imply that \(CA_{1}C^{-1}=A_{2}\).
The top left elements of these matrices are equal to \(\sqrt{c}-\frac{a_{1}u}{u+1}\) and \(\sqrt{c}\), respectively.
Since \(a_{1}\ne 0\), we get \(u=0\), thus \(A_{1}=A_{2}\), so \(a_{1}=a_{2}\) and \(w_{1}=w_{2}\).

Finally, \(n\)-th renormalizations of \(T_{1}\) and \(T_{2}\) are equal to each other, hence the original maps are equal as well.

\section{Concluding remarks and open problems}\label{sec:concluding}
\subsection{Circle homeomorphisms with breaks}\label{sec:future-breaks}
While \autoref{thm:main} states that a map \(f\) with \(S(f)<0\) cannot be \(C^{2-\nu}\) conjugate to a linear fractional map \(g\), we expect that two generic maps \(f\), \(g\), \(\rho(f)=\rho(g)\notin\bbQ\), are not \(C^{2}\) conjugate to each other.
More precisely, we expect that asymptotically we have
\[
  \left|\xi_{f^{q_{n}}}(J_{n})-  \xi_{g^{q_{n}}}(J'_{n})\right|\ge C\sum_{k=0}^{q_{n}-1}{\left|f^{k}(J_{n})\right|}^{2},
\]
where \(J'_n\) is the interval conjugated to \(J_n\), and one can repeat the proof of \autoref{thm:main}.

For different \(k\geq 2\) one can ask what is the codimension of maps which are \(C^k\) conjugate to a given generic map.

It is also natural to ask about the smoothness of conjugacy for linear fractional maps, see \autoref{sec:conj-fract-line}.

\subsection{Critical circle maps}\label{sec:critical-circle-maps}

The main difficulty in carrying out the same program for critical circle maps is that the Schwartzian derivative of a critical circle map is infinite at a critical point. As a result it is much more difficult to estimate the distortion of the cross-ratio.
Nevertheless we still expect that generically the difference between the total cross-ratio distortions for two different critical circle maps
is of the order \(\sum_{k=0}^{q_n-1}|f^k(J_n)|^2\). Another ingredient of our analysis, namely universal bounds, is readily available
in the critical setting. The arguments above gives us a reason to believe that the following conjecture holds.
\begin{conjecture}
  For any \(\gamma>1\) there exists a constant \(0<\nu(\gamma)<1\) such that for any
  irrational \(0<\rho<1\) one can find a pair of smooth \(\gamma\)-critical circle maps
  \(f,g\) with the rotation number \(\rho\) for which there is no conjugacy of class
  \(C^{2-\delta}\), where \(\delta<\alpha\).
\end{conjecture}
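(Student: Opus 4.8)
The plan is to transplant the proof of \autoref{thm:main} to the critical setting, the one new ingredient being a quantitative non-cancellation estimate that replaces the identity ``$S(g)\equiv0$ for the linear fractional model''. Fix a convenient $\gamma$-critical circle map $g$ with rotation number $\rho$ and let $f$ be obtained from $g$ by a perturbation supported away from the forward orbit of the critical point, followed by a one-parameter adjustment that restores $\rho(f)=\rho$ (exactly as $R_\eps$ is used in the proof of \autoref{cor:no-conj-lin-frac}); we arrange the perturbation so that $S(f)-S(g)$ keeps a fixed sign and is bounded away from $0$ on every compact subset of the regular part of the circle. Granting the key estimate below, the argument of \autoref{sec:proof-main} then runs verbatim: if $h\circ f=g\circ h$ with $h\in C^{1+\alpha}$, then \autoref{cor:C2-conj} (whose proof uses only \autoref{lem:C2-hom} and \eqref{eq:xi-comp}, hence is insensitive to the nature of the singularity) gives $|\xi_{f^{q_n}}(J)-\xi_{g^{q_n}}(h(J))|=O(|J|^{\alpha}+|f^{q_n}(J)|^{\alpha})$ for any interval $J$ whose first $q_n$ iterates miss the critical point, while the geometry forces $\sum_{k<q_n}|f^k(J)|^{2}$ to be of order $1/q_n$ when $J$ is half of the shortest interval $\breve\Delta_n=\Delta^{l_n}_{n-1}$; combined with the exponential smallness of $\breve\Delta_n$ relative to its invariant measure this yields $\alpha\le1-\nu(\gamma)$, i.e. no conjugacy of class $C^{2-\delta}$ with $\delta<\nu(\gamma)$ can exist.

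The geometric inputs — the analogue of \autoref{thm:zeta-decay} and of the Denjoy-type \autoref{lem:universal_derivative} — are available in the critical case from the a priori (real) bounds for critical circle maps together with exponential convergence of renormalization (known for $\gamma$ an odd integer). Explicitly, one needs: (i) $\max_{\Delta\in\zeta_n}|\Delta|=O(\gamma_1(\gamma)^n)$ with a universal $\gamma_1(\gamma)<1$; (ii) a matching lower bound $\max_{k}|\Delta^k_{n-1}|\ge C(f)r(\gamma)^n$; (iii) the crucial $\min_{k}|\Delta^k_{n-1}|=O(\gamma_2(\gamma)^n\mu_{n-1})$, which is driven — exactly as the break is in \autoref{small_intervals} — by the fact that the invariant measure of a critical circle map is singular; and (iv) $D(\gamma)^{-1}\le(f^{q_n})'(x)\le D(\gamma)$ for $x$ outside a fixed neighborhood of the critical point, with controlled distortion inside it. Assembling these from the literature, rather than reproving them, should be routine, and they let one pass from $\Delta^k_{n-1}$ to $f^k(J)$ freely, as in \autoref{lem:sum-sqr-le}.

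The real obstacle — and the reason this remains conjectural — is the cross-ratio distortion estimate
\[
  \left|\xi_{f^{q_n}}(J_n)-\xi_{g^{q_n}}(h(J_n))\right|\ \ge\ C\sum_{k=0}^{q_n-1}\left|f^k(J_n)\right|^{2}.
\]
For the $q_n-O(1)$ indices $k$ for which $f^k(J_n)$ sits in a region bounded away from the critical point, the summands $\xi_f(f^k(J_n))$ and $\xi_g(h(f^k(J_n)))$ are, up to higher order, $\frac16 S(f)|f^k(J_n)|^2$ and $\frac16 S(g)|h(f^k(J_n))|^2$ by \eqref{eq:Sf-mixed-partial}, so the sign condition on $S(f)-S(g)$ makes these ``bulk'' contributions add with a definite sign and of the order of $\sum_k|f^k(J_n)|^2$. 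What is missing is control of the remaining $O(1)$ terms, those for which $f^k(J_n)$ lies in a definite neighborhood of the critical value where $S=-\infty$ and $f$ looks like $z\mapsto z^{\gamma}$: one must show these do not cancel the bulk, which requires a genuine asymptotics for the cross-ratio distortion of the near-critical iterates in place of the elementary \eqref{eq:Sf-mixed-partial}. A way to sidestep a direct singular computation is a transversality argument: take $f=f_\eps$ with $f_0=g$, differentiate the finite-scale quantity $\xi_{f_\eps^{q_n}}(J_n)$ in $\eps$ at $\eps=0$, and prove that this derivative is comparable to $\sum_{k<q_n}|f^k(J_n)|^2$ uniformly in $n$; this turns the singular estimate into a linearized statement around the renormalization fixed point, where the known universality for critical circle maps can be brought to bear. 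Once the displayed inequality is established, the deduction of $\nu(\gamma)$ is word for word that of \autoref{sec:proof-main}.
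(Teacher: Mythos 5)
This statement is stated in the paper as a \emph{conjecture} and is not proved there; the authors themselves only offer the heuristic that the total cross-ratio distortion difference should be of order \(\sum_{k<q_n}|f^k(J_n)|^2\), and they name the infinite Schwarzian at the critical point as the obstruction. Your proposal reproduces exactly this heuristic and, to your credit, explicitly flags the same obstruction as the missing step. So there is nothing to compare against: neither you nor the paper closes the argument, and your text is a plan, not a proof.

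To be concrete about where the plan is incomplete. First, the central inequality
\[
  \bigl|\xi_{f^{q_n}}(J_n)-\xi_{g^{q_n}}(h(J_n))\bigr|\ \ge\ C\sum_{k=0}^{q_n-1}\bigl|f^k(J_n)\bigr|^{2}
\]
is not established. In the break case the paper never needs to control a \emph{difference} of two singular contributions: there \(g\) is linear fractional, so \(\xi_{g^{q_n}}(h(J_n))=0\) identically, and the one-sided bound \(\xi_f(f^k(J))\le -s|f^k(J)|^2\) from \autoref{lem:DLCR-S} does all the work. In the critical case both \(f\) and \(g\) have \(S\to-\infty\) at the critical point, the near-critical summands \(\xi_f(f^k(J_n))\) are not \(O(|f^k(J_n)|^2)\) (they are the dominant terms), and the quantity you must bound from below is a difference of two such large contributions. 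Your suggested transversality/linearization workaround is plausible but is itself an open renormalization statement, not something you can cite. Second, the ``routine'' geometric inputs are not routine: in the break case the analogue of \eqref{eq:zeta-min-delay} with a constant \(\gamma_2\) depending only on the singularity type occupies all of \autoref{sec:decay-interv-part} and rests on the explicit limiting family \(F_{\alpha,v,c}\), uniform convexity of the renormalizations, and the break-size identity \eqref{eq:mul-break}; none of these transfers verbatim to \(z\mapsto z^\gamma\)-type singularities, and for non-odd \(\gamma\) even exponential convergence of renormalization is not available. Until the displayed inequality and the uniform geometric estimates are actually proved, the statement remains a conjecture, as the paper presents it.
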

Smoothness for \(f\), \(g\) means here \(C^{\infty}\) outside of critical points, or analytic in the case of odd integer \(\gamma\).
\printbibliography
\end{document}